\documentclass[12pt]{article}
\usepackage{amsthm,amsfonts,amssymb,amsmath}
\usepackage{stmaryrd}

\usepackage{cite,hyperref}
\usepackage{epsfig}
\usepackage{url}
\usepackage{xcolor,tikz}
\usetikzlibrary{matrix}
\usepackage{multicol,graphicx}
\usepackage{authblk,fullpage}
\usepackage{bbm}

\usepackage[shortlabels]{enumitem}

\numberwithin{equation}{section}

\setlist{leftmargin=3\parindent,labelindent=3\parindent}
\setlist[enumerate]{%
  leftmargin=3\parindent,%
  align=left,%
  labelwidth=3\parindent,%
  labelsep=0pt%
}
\setlist[enumerate,1]{%
  label={\normalfont (\thesection.\arabic{equation})}, ref={\normalfont \thesection.\arabic{equation}},
  resume%
}

\newtheorem{thm}[equation]{Theorem}
\newtheorem{cor}[equation]{Corollary}
\newtheorem{lem}[equation]{Lemma}
\newtheorem{prop}[equation]{Proposition}
\newtheorem{claim}[equation]{Claim}

\newtheorem{ques}[equation]{Question}

\theoremstyle{definition}
\newtheorem{defn}[equation]{Definition}
\newtheorem{ass}[equation]{Assumption}

\newtheorem{rem}[equation]{Remark}
\newtheorem{obs}[equation]{Observation}

\newtheorem*{ack}{Acknowledgements}

\theoremstyle{remark}
\newtheorem{case}{Case}
\newtheorem{casee}{Case}

\title{Tur\'an Colourings in Off-Diagonal Ramsey Multiplicity}
\author{Joseph Hyde}
\author{Jae-baek Lee}
\author{Jonathan A. Noel\thanks{Research supported by NSERC Discovery Grant RGPIN-2021-02460, NSERC Early Career Supplement DGECR-2021-00024 and a Start-Up Grant from the University of Victoria.}} 

\affilsep=9pt
\affil{\normalsize{Department of Mathematics and Statistics, University of Victoria, Victoria, B.C., Canada.}}
\affil{\texttt{\{josephhyde,dlwoqor0923,noelj\}@uvic.ca}}

\DeclareTextCompositeCommand{\v}{OT1}{l}{l\nobreak\hspace{-.1em}'}
\DeclareTextCompositeCommand{\v}{OT1}{t}{t\nobreak\hspace{-.1em}'\nobreak\hspace{-.15em}}

\DeclareMathOperator{\inj}{inj}

\DeclareMathOperator{\crit}{crit}

\pgfdeclarelayer{edgelayer}
\pgfdeclarelayer{nodelayer}
\pgfsetlayers{edgelayer,nodelayer,main}

\tikzstyle{none}=[inner sep=0pt]
\definecolor{hexcolor0xf81e1c}{rgb}{0.973,0.118,0.110}
\definecolor{hexcolor0x3c00ff}{rgb}{0.235,0.000,1.000}

\tikzstyle{root}=[rectangle, fill=white,draw=black, scale=0.70]
\tikzstyle{vertex}=[circle, fill=white,draw=black, scale=0.55]
\tikzstyle{root_small}=[rectangle, fill=white,draw=black, scale=0.350]
\tikzstyle{vertex_small}=[circle, fill=white,draw=black, scale=0.35]
\tikzstyle{dashededge}=[draw=black, densely dotted]
\tikzstyle{edge}=[draw=black]

\begin{document}

\maketitle
\setcounter{MaxMatrixCols}{16}

\begin{abstract}
The \emph{Ramsey multiplicity constant} of a graph $H$ is the limit as $n$ tends to infinity of the minimum density of monochromatic labeled copies of $H$ in a $2$-edge colouring of $K_n$. Fox and Wigderson recently identified a large family of graphs whose Ramsey multiplicity constants are attained by sequences of ``Tur\'an colourings;'' i.e. colourings in which one of the colour classes forms the edge set of a balanced complete multipartite graph. Each graph in their family comes from taking a connected non-3-colourable graph with a critical edge and adding many pendant edges. We extend their result to an off-diagonal variant of the Ramsey multiplicity constant which involves minimizing a weighted sum of red copies of one graph and blue copies of another. 
\end{abstract}

\section{Introduction}

The central question in the area of ``Ramsey multiplicity'' is: how should one colour the edges of the clique $K_n$, for large $n$, with red and blue to minimize the number of monochromatic labeled copies of a fixed graph $H$? As an ``off-diagonal'' generalization, one could instead minimize a ``suitable linear combination'' of the number of red copies of one graph $H_1$ and blue copies of another graph $H_2$; the coefficients of this linear combination will be specified in Section~\ref{sec:prelims} but, for now, it suffices to think of them as arbitrary positive reals that may depend on $n$. Ramsey multiplicity problems have been extensively studied; see, for example,~\cite{ParczykPokuttaSpiegelSzabo22,Fox08,MossNoel23++,BurrRosta80,FoxWigderson23,JaggerStovicekThomason96,Thomason97,Conlon12,LeeNoel23+,Cummings+13}.

One of the first strategies that comes to mind is to consider a uniformly random colouring. In the diagonal setting, i.e. when $H_1=H_2=H$ for some graph $H$, a random colouring has approximately $(1/2)^{e(H)-1}n^{v(H)}$ monochromatic copies of $H$ with high probability, where $v(H):= |V(H)|$ and $e(H):= |E(H)|$. A graph $H$ is said to be \emph{common} if every colouring has at least this many copies, up to a $(1+o(1))$ factor where the $o(1)$ term tends to $0$ as $n\to\infty$. The notion of common graphs has its origins in the work of Goodman~\cite{Goodman59} and Erd\H{o}s~\cite{Erdos62} in the 1950s and 60s and has been a popular area of research ever since~\cite{GrzesikLeeLidickyVolec22,Thomason89,FirstPaper,KralVolecWei22+,JaggerStovicekThomason96,Hatami+12,LeeNoel23+,KoLee23,SecondPaper}.

After considering random colourings, perhaps the next most natural strategy is to ``pack in'' as many red edges as possible without creating a red copy of $H_1$. Inspired by the well-studied area of ``Ramsey goodness,''~\cite{Erdos47,GerencserGyarfas67,Burr81,Chvatal77,ChvatalHarary72,BurrErdos83,Haslegrave+23,FoxHeWigderson21+,LinLiu21,LinPeng21} one way to do this is to divide $V(K_n)$ into $\chi(H_1)-1$ classes of cardinality $\left\lfloor n/(\chi(H_1)-1)\right\rfloor$ or $\left\lceil n/(\chi(H_1)-1)\right\rceil$ and colour an edge red if it connects vertices in different classes or blue otherwise. Such a colouring, which is referred to as a \emph{Tur\'an colouring},\footnote{We also use the term \emph{Tur\'an colouring} to refer to a colouring in which there are $\chi(H_2)-1$ classes of almost equal size, edges between the classes are blue and edges within the classes are red.} has no red copy of $H_1$ and has at most
\[(1-o(1))\left(\frac{1}{\chi(H_1)-1}\right)^{v(H_2)-k(H_2)} n^{v(H_2)}\]
blue copies of $H_2$, where $k(H_2)$ denotes the number of connected components of $H_2$. In the diagonal setting, Fox and Wigderson~\cite{FoxWigderson23} recently proved that this strategy is optimal for a fairly large family of graphs (see Theorem~\ref{th:FW} below). Prior to their work, there were no examples of uncommon graphs for which the Ramsey multiplicity problem had been solved. 

Our main result (Theorem~\ref{th:hairy} below) extends this theorem of Fox and Wigderson~\cite{FoxWigderson23} to an off-diagonal setting. Stating our results precisely requires some technical definitions which we will formally provide in Section~\ref{sec:prelims}. For the time being, we informally say that $(H_1,H_2)$ is a \emph{bonbon pair} if, for large enough $n$, the only colourings minimizing the ``suitable linear combination'' of the number of red copies of $H_1$ and blue copies of $H_2$ alluded to in the first paragraph are the Tur\'an colourings. Following~\cite{FoxWigderson23}, if $H$ is a graph such that $(H,H)$ is a bonbon pair, then $H$ is said to be a \emph{bonbon}.

For a graph $F$ and $t\geq0$, a \emph{$t$-hairy $F$} is a graph that is created by adding $t$ edges to $F$, one at a time, such that each added edge has exactly one endpoint in $V(F)$. If $H$ is a $t$-hairy $F$ for some $t$, then we simply say that $H$ is a  \emph{hairy} $F$. An edge $e$ of a graph $F$ is \emph{critical} if $\chi(F-e)<\chi(F)$; i.e. removing the edge $e$ (and neither of its vertices) from $F$ decreases the chromatic number. We state the main result of~\cite{FoxWigderson23} and our off-diagonal generalization of it. 

\begin{thm}[Fox and Wigderson~{\cite[Theorem~1.2]{FoxWigderson23}}]
\label{th:FW}
For any connected non-3-colourable graph $F$ that contains a critical edge, there exists $t_0=t_0(F)$ such that, for any $t\geq t_0$, every $t$-hairy $F$ is a bonbon.
\end{thm}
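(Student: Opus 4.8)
\noindent Write $k:=\chi(F)-1$; since $F$ is not $3$-colourable, $k\ge 3$. Fix $t$ and let $H$ be a $t$-hairy $F$, obtained from $F$ by attaching $a_i\ge 0$ pendant edges at its $i$-th vertex, so $\sum_i a_i=t$, $v(H)=v(F)+t$ and $e(H)=e(F)+t$; since $F$ has an edge, $\chi(H)=\chi(F)=k+1$ and $H$ is connected. The plan is to pass to the limiting minimisation problem: a sequence of $2$-colourings of $K_n$ yields a symmetric measurable $W\colon[0,1]^2\to[0,1]$ recording the density of red, and the normalised count of monochromatic copies of $H$ converges to $t(H,W)+t(H,1-W)$, $t(\cdot,\cdot)$ being homomorphism density. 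The Turán colouring with $k$ balanced parts (red across, blue within) limits to the complete $k$-partite graphon $W_k$, with $t(H,W_k)=0$ (as $\chi(H)>k$) and $t(H,1-W_k)=k^{1-v(H)}$ (a copy of the connected graph $H$ must lie inside a single part of measure $1/k$). So it suffices to prove: (i) $t(H,W)+t(H,1-W)\ge k^{1-v(H)}$ for every kernel $W$, with equality only for $W=W_k$ up to a measure-preserving bijection and the swap $W\leftrightarrow 1-W$; and (ii) a finite-$n$ sharpening, that for large $n$ the only colourings of $K_n$ attaining the minimum are genuinely Turán — this is where the critical edge is indispensable.

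For (i) the engine is \emph{pendant amplification}: writing $d_W(x):=\int_0^1 W(x,y)\,dy$,
\[
t(H,W)=\int_{[0,1]^{V(F)}}\,\prod_{i\in V(F)} d_W(x_i)^{a_i}\,\prod_{ij\in E(F)} W(x_i,x_j)\,d\mathbf{x},
\]
and likewise for $1-W$ with $d_W$ replaced by $1-d_W$. Since $t=\sum_i a_i$ is large and $\max\{d_W(x),1-d_W(x)\}\ge\tfrac12$ for all $x$, the functional $t(H,W)+t(H,1-W)$ is, to leading order in $t$, governed by the sum over vertices $v$ of $\bigl(\text{degree at }v\text{ in whichever colour supports a copy of }F\text{ through }v\bigr)^{t}$ times the corresponding $F$-count. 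Two consequences follow. First, if a positive density of copies of $F$ occur through vertices whose degree \emph{in that colour} is at least $\tfrac12$, the contribution is already $\gtrsim 2^{-t}$, which exceeds $k^{1-v(H)}=k^{1-v(F)}k^{-t}$ once $t$ is large (here $k\ge3$ is used, so $(2/k)^t\to0$); hence in a near-optimal $W$ the majority colour at a vertex almost never carries a copy of $F$, and by a removal-lemma argument the red-majority graph and the blue-majority graph are each $o(1)$-close in edit distance to an $F$-free graphon. Second, keeping the degree in the $F$-supporting colour small means making the $F$-free colour as dense as possible, i.e.\ density $\to 1-\tfrac1k$; by Erd\H{o}s--Simonovits stability it is then $o(1)$-close to a complete $k$-partite graphon — necessarily with exactly $k$ parts (fewer parts would make the complement contain $K_{\Omega(n)}\supseteq F$, while complete $(k+1)$-partite is not $F$-free) — and strict convexity of $x\mapsto x^{v(H)}$ balances the parts. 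This reduces (i) to local minimality at $W_k$.

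The \emph{critical edge} $e=xy$ drives both that local step and (ii). In $W_k$ the red graph is complete $k$-partite, hence $F$-free; but since $\chi(F-e)=k$ and $e$ is critical, \emph{every} proper $k$-colouring of $F-e$ gives $x$ and $y$ the same colour (else it would extend to a proper $k$-colouring of $F$). Therefore recolouring even an $\varepsilon$-fraction of within-part pairs red creates red copies of $F$ at a rate linear in $\varepsilon$ (embed the colour class of $\{x,y\}$ into that part), and each new red $F$ extends to red copies of $H$ with efficiency $\bigl(\tfrac{k-1}{k}\bigr)^t$ from its pendants (a vertex of $W_k$ has red-degree $\tfrac{k-1}{k}$), while the blue copies of $H$ destroyed are governed by blue-degree $\tfrac1k$; since $k-1\ge2$, for $t$ large $\bigl(\tfrac{k-1}{k}\bigr)^t$ overwhelms $\bigl(\tfrac1k\bigr)^t$, so such perturbations strictly increase the total, and unbalancing the parts strictly increases $t(H,1-W)$ by convexity. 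Together with the structural conclusion of the previous paragraph this shows $W_k$ is the unique minimiser, proving (i). For (ii) one reruns the perturbation on $K_n$: in a colouring that is close to, but not exactly, Turán, recolouring a single within-part edge red gains $\Theta(n^{v(H)-2})$ red copies of $H$ while destroying only $O(n^{v(H)-2})$ blue ones, with the constants separated exactly as above; so no non-Turán colouring near $W_k$ can be optimal. Hence for all large $t$, then all large $n$, the optimal colourings of $K_n$ are precisely the Turán colourings, i.e.\ every $t$-hairy $F$ is a bonbon.

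The step I expect to be the \textbf{main obstacle} is the stability/optimisation half of (i): upgrading ``$F$ is essentially absent from the majority-colour graph'' to ``$W$ is $o(1)$-close to the balanced complete $k$-partite graphon'' requires more than plain Erd\H{o}s--Simonovits stability, because one has to control both colour classes simultaneously, absorb the vertices whose two colour-degrees are nearly equal (a null set in the limit, $o(n)$ vertices for finite $n$), and rule out every competing clustered structure — the wrong number of parts, or a parasitic sparse layer laid over a near-Turán pattern — from the sole hypothesis that the large-$t$ functional is nearly minimised.
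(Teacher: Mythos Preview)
The paper does not prove this theorem directly --- it is quoted from Fox--Wigderson --- but the paper's Theorem~\ref{th:hairy} generalises it, and specialising that proof to $F_1=F_2=F$, $q=1$ gives a proof which I compare against.

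Your high-level strategy matches the paper's: pendant amplification to force degree structure, Removal Lemma plus Erd\H{o}s--Simonovits stability to get a near-complete-$k$-partite picture (with $k=\chi(F)-1$), the critical edge to eliminate within-part deviations, and convexity to balance the parts. The methodological difference is that you detour through the graphon limit for (i) and then try to return to finite $n$ in (ii), whereas the paper works with a fixed $n$-vertex minimiser throughout.

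That detour creates a genuine gap. Your step (ii) is insufficient: graphon uniqueness tells you only that any sequence of finite minimisers converges to $W_k$ in cut distance, not that for large $n$ the minimiser is \emph{exactly} Tur\'an. Your perturbation sentence is a statement about local optimality of the Tur\'an colouring (and is phrased in the wrong direction --- you want to recolour a stray within-part red edge \emph{blue} to improve), and it addresses only one mode of deviation. To upgrade ``close in cut distance'' to ``exactly Tur\'an'' one must first control the degree of \emph{every individual vertex}, not just averages: a single vertex of anomalous degree does not affect the cut distance but can wreck the edge-flip accounting. The paper devotes most of Section~\ref{sec:hairier} to this. A Zykov-type symmetrisation (Lemma~\ref{lem:allVertsSame}) forces every vertex to contribute roughly equally, and then two separate pendant-amplification arguments (Lemmas~\ref{lem:noBigToAll}--\ref{lem:noV1}) pin each vertex's degree into a narrow window around $(1-\tfrac1k)n$; the lower bound uses a vertex of $F$ carrying at least the average share of pendants, while the upper bound uses the endpoint of the critical edge carrying at most half of them. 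Only after every vertex is thus tamed does the critical-edge flip (Lemma~\ref{lem:noBlueInside}) go through, followed by a separate elimination of between-part deviations and the convexity step. You correctly flagged the stability half of (i) as the main obstacle, but the obstacle persists into (ii) in the form of these per-vertex degree bounds, and your resolution does not clear it.
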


\begin{thm}
\label{th:hairy}
Let $q\in(0,1]$ and let $F_1$ and $F_2$ be non-bipartite graphs, each of which contains a critical edge, such that $\chi(F_1)+\chi(F_2)\geq7$. Then there exists $t_0=t_0(F_1,F_2,q)$ such that if $H_1$ is a $t_1$-hairy $F_1$ and $H_2$ is a $t_2$-hairy $F_2$ with $t_1,t_2\geq t_0$ and 
\[\min\{v(H_1),v(H_2)\}\geq q\cdot\max\{v(H_1),v(H_2)\},\]
then $(H_1,H_2)$ is a bonbon pair.
\end{thm}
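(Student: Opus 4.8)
The plan is to follow the strategy behind Theorem~\ref{th:FW}, adapted to the off-diagonal setting, in three stages: a \emph{hair-stripping reduction}, an \emph{approximate-structure} step, and a \emph{local exchange} step.

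\medskip\noindent\textbf{Stage 1 (hair-stripping).} Since each $H_i$ is obtained from $F_i$ by successively adding pendant vertices, a monochromatic copy of $H_i$ in a colouring $c$ of $K_n$ consists of a monochromatic copy of $F_i$ together with, for each attachment point $w\in V(F_i)$, a suitable monochromatic tree rooted at $w$. Hence the number of red copies of $H_1$ equals $(1+o(1))\sum_{\phi}\prod_{w\in V(F_1)}\big(d_r(\phi(w))\big)^{a_1(w)}$, where the sum is over red copies $\phi$ of $F_1$, $d_r$ is red degree, and $a_1(w)\ge 0$ records the hairs at $w$; symmetrically for blue copies of $H_2$. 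The coefficients of the ``suitable linear combination'' from Section~\ref{sec:prelims} are calibrated so that both families of Tur\'an colourings attain a common value, which we normalise to $1$; it then suffices to show that every colouring has value at least $1$, with equality only at Tur\'an colourings. Two crude consequences will be used repeatedly: a red copy of $H_1$ forces a red copy of $F_1$, and since $t_1$ is large the count is dominated by red copies of $F_1$ whose attachment points carry near-maximum red degree.

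\medskip\noindent\textbf{Stage 2 (approximate structure).} I would prove that a colouring whose value is close to $1$ has its red graph within $o(n^2)$ edges of a balanced complete $(\chi(F_1)-1)$-partite graph, \emph{or} its blue graph within $o(n^2)$ edges of a balanced complete $(\chi(F_2)-1)$-partite graph. If the red graph has more than $(1-\tfrac1{\chi(F_1)-1}+\varepsilon)\binom n2$ edges then, by Erd\H{o}s--Stone--Simonovits supersaturation and the Stage~1 estimate, there are $\Omega_\varepsilon(n^{v(H_1)})$ red copies of $H_1$; as the normalised Tur\'an density of red $H_1$ is an exponentially small power of $\chi(F_2)-1\ge 2$, this makes the value enormous, so we may assume the red graph has at most that many edges and, symmetrically, so does the blue graph. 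In the resulting range both colour classes have density near the respective Tur\'an thresholds, and Erd\H{o}s--Simonovits stability — together with a flag-algebra computation on small $2$-coloured patterns (such as those in the figures) — forces one of the two multipartite structures. The hypothesis $\chi(F_1)+\chi(F_2)\ge 7$ enters here: when $\chi(F_1)=\chi(F_2)=3$, a uniformly random colouring already attains value $1$ (using $e(F_i)\ge v(F_i)$), so the desired strict inequality fails.

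\medskip\noindent\textbf{Stage 3 (local exchange).} Let $c$ be a value-minimiser; by Stage~2 (minimisers have value at most $1$) it is within $o(n^2)$ edges of, say, the balanced complete $(\chi(F_1)-1)$-partite structure with parts $V_1,\dots,V_{\chi(F_1)-1}$. If $c$ has a red edge $e$ inside a part $V_i$, recolour it blue: picking a critical edge of $F_1$, so that $F_1$ minus it embeds into the multipartite structure with both endpoints in $V_i$, one destroys $\Theta(n^{v(H_1)-2})$ red copies of $H_1$ while creating only $O(n^{v(H_2)-2})$ blue copies of $H_2$. If $c$ has a blue edge between two parts, recolour it red using a critical edge of $F_2$ instead. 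Feeding the Stage~1 formulas into these two moves, the change in value has a definite sign exactly when $(\chi(F_1)-2)(\chi(F_2)-1)>\chi(F_1)-1$ together with its mirror — that is, exactly when $\chi(F_1)+\chi(F_2)\ge 7$. Furthermore the gain from the recoloured edge scales like a power that is exponential in one of $t_1,t_2$, while the loss is only linear in the other; so $t_0$ must be taken large, and the hypothesis $\min\{v(H_1),v(H_2)\}\ge q\max\{v(H_1),v(H_2)\}$ is needed to keep the two hair lengths within a constant factor so that the exponential term dominates once $t_0=t_0(F_1,F_2,q)$ is large. Once no defect edges remain, convexity of $\sum_i|V_i|^{v(H_2)}$ forces the parts to be balanced, so $c$ is a Tur\'an colouring. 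Hence the minimum value is $1$ and is attained only by Tur\'an colourings, i.e.\ $(H_1,H_2)$ is a bonbon pair.

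\medskip\noindent\textbf{Main obstacle.} Stage~2 is the crux. The implication ``few monochromatic $H_1$ $\Rightarrow$ red graph nearly $(\chi(F_1)-1)$-partite'' is false by itself (a sparse red graph has few red $H_1$ trivially), so one must genuinely use that there are simultaneously few red $H_1$ and few blue $H_2$, and combine supersaturation, stability, and the flag-algebra bound into a single argument valid for all admissible chromatic numbers and uniform in the unbounded hair lengths via Stage~1. Making that flag-algebra inequality sharp enough, and checking the Stage~3 sign conditions in the boundary cases $\chi(F_1)+\chi(F_2)=7$, is where the effort concentrates.
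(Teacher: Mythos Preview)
Your three-stage outline has the right shape, and Stage~3 is close to the paper's final exchange (Lemma~\ref{lem:noBlueInside}); the sign condition $(\chi(F_1)-2)(\chi(F_2)-1)>\chi(F_1)-1$ is essentially what is used there. The gap is in Stage~2.

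Your two upper bounds, red density $\le 1-\tfrac{1}{\chi(F_1)-1}+\varepsilon$ and blue density $\le 1-\tfrac{1}{\chi(F_2)-1}+\varepsilon$, do \emph{not} force either colour near its Tur\'an threshold: since $\tfrac{1}{\chi(F_1)-1}+\tfrac{1}{\chi(F_2)-1}<1$ whenever $\chi(F_1)+\chi(F_2)\ge 7$, the two constraints leave the red density free to sit anywhere in an interval of positive length, and on the interior of that interval neither stability theorem applies. Your proposed fix, a ``flag-algebra computation on small $2$-coloured patterns,'' cannot work uniformly over all admissible $F_1,F_2$: flag-algebra certificates are for fixed finite collections of flags, whereas here the forbidden subgraphs are arbitrary non-bipartite graphs with a critical edge. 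So as written, Stage~2 does not yield the approximate multipartite structure.

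The paper resolves this differently. It classifies vertices by degree into $V_0,V_1,V_2$ (Definition~\ref{def:V1V2}) and shows $V_0$ and $V_1$ are tiny. The step replacing your flag-algebra idea is Lemma~\ref{lem:V0small}: vertices of $V_0$ have moderate degree in \emph{both} colours, so a copy of either $F_i$ inside $G_i[V_0]$ extends to many copies of $H_i$ via the hairs (your Stage~1 observation, made precise as Lemma~\ref{lem:inducedSubgraph}); but the off-diagonal Ramsey constant $c_1(F_1,F_2)>0$ of~\cite{MossNoel23++} forces $t(F_1,G_1[V_0])+t(F_2,G_2[V_0])$ to be bounded below, a contradiction if $|V_0|\ge \xi n$. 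A supersaturation argument then bounds $|V_1|$, so almost every vertex lies in $V_2$, which forces the $G_2$-density up to $1-\tfrac{1}{\chi(F_2)-1}-o(1)$; only now do the Removal Lemma and Erd\H{o}s--Simonovits stability apply. Your plan also omits the Zykov-style vertex regularisation (Lemma~\ref{lem:allVertsSame}), which the paper uses throughout Section~\ref{sec:hairier} to bound the contribution of individual vertices before any edge-exchange can be carried out.
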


One may wonder whether the presence of critical edges and the dependence of $t_0$ on the parameter $q$ are essential in Theorem~\ref{th:hairy}. The next proposition implies that both conditions are necessary. Let $\crit(F)$ denote the number of critical edges in a graph $F$. An explicit form of the function $g$ in the following proposition will be provided in Section~\ref{sec:neg} (see Theorem~\ref{th:imbalance}). 

\begin{prop}
\label{prop:imbalance}
There exists a function $g:\mathbb{R}^4\to \mathbb{R}$ with the property that, if $H_1$ and $H_2$ are non-empty graphs such that
\[e(H_1)> \crit(H_2)\cdot g(\chi(H_1),\chi(H_2),v(H_2),k(H_2)),\]
then $(H_1,H_2)$ is not a bonbon pair.
\end{prop}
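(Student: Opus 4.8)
The plan is to show that when $H_1$ has far more edges than $H_2$ has critical edges, some non-Tur\'an colouring beats every Tur\'an colouring, so $(H_1,H_2)$ cannot be a bonbon pair. The idea is to exploit the asymmetry in the ``suitable linear combination'' referenced in Section~\ref{sec:prelims}: the relative weight put on blue copies of $H_2$ versus red copies of $H_1$ is calibrated (as it must be, for the problem to be nondegenerate) so that the two terms are comparable in a random colouring, hence of order roughly $2^{-e(H_1)}n^{v(H_1)}$ and $2^{-e(H_2)}n^{v(H_2)}$ respectively. A Tur\'an colouring for $H_1$ kills all red copies of $H_1$ but pays $\Theta\!\left((\chi(H_1)-1)^{-(v(H_2)-k(H_2))}n^{v(H_2)}\right)$ in blue copies of $H_2$; symmetrically for a Tur\'an colouring aligned with $H_2$. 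So the value of the best Tur\'an colouring is essentially the minimum of these two quantities, and I want to produce a colouring doing strictly better.

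First I would set up the comparison concretely. Take a Tur\'an colouring $\phi$ realizing (up to $1+o(1)$) the Tur\'an optimum, say the one with $\chi(H_2)-1$ parts, blue between parts and red inside, so it has no blue $H_2$ and pays only in red copies of $H_1$; its cost is of order the weight-on-$H_1$ times $(\chi(H_2)-1)^{-(v(H_1)-k(H_1))}n^{v(H_1)}$. Now perturb $\phi$ by recolouring a tiny $\varepsilon$-fraction of the red (within-part) edges to blue, or alternatively by deleting one critical edge's worth of structure: because each critical edge of $H_2$ lies in a copy of $K_{\chi(H_2)}$-type obstruction, flipping a sparse random set of red edges to blue creates only $O(\crit(H_2)\cdot \varepsilon \cdot n^{v(H_2)-1}\cdot(\text{poly}))$ new blue copies of $H_2$ while destroying a positive proportion — of order $\varepsilon$ — of the red copies of $H_1$ (since red $H_1$'s are numerous, $\Theta(n^{v(H_1)})$ of them under $\phi$, each using several within-part red edges). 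The key inequality to verify is then that the gain, of order $\varepsilon \cdot w_1 \cdot n^{v(H_1)}$, exceeds the loss, of order $\crit(H_2)\cdot \varepsilon \cdot w_2 \cdot n^{v(H_2)-1}\cdot C(\chi(H_2),v(H_2),k(H_2))$; after cancelling $\varepsilon$ and substituting $w_1/w_2 \asymp 2^{-e(H_1)+e(H_2)} n^{v(H_1)-v(H_2)}$ (the calibration of the linear combination), this reduces to a clean bound of the shape $e(H_1) \le \crit(H_2)\cdot g(\chi(H_1),\chi(H_2),v(H_2),k(H_2))$ for an explicit $g$; negating it gives exactly the hypothesis of the proposition. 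The same computation run against the other Tur\'an colouring (the one aligned with $H_1$) shows that colouring is also beaten, so no Tur\'an colouring is optimal, and since bonbon pairs require the Tur\'an colourings to be the \emph{only} minimizers, $(H_1,H_2)$ fails to be a bonbon pair.

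I expect the main obstacle to be bookkeeping the constant $g$ precisely enough that it depends only on the four stated parameters $\chi(H_1),\chi(H_2),v(H_2),k(H_2)$ and not on $H_1$ beyond its edge count — in particular, bounding the number of blue copies of $H_2$ created per flipped red edge purely in terms of $v(H_2)$, $k(H_2)$ and $\chi(H_2)$ (via the critical-edge structure of $H_2$, which confines each such copy to lie across a bounded number of parts) while making sure the count of \emph{destroyed} red $H_1$-copies is bounded \emph{below} by a positive constant times $n^{v(H_1)}$ using only $\chi(H_1)$ and $e(H_1)$. A careful way to handle the lower bound is to note that under a Tur\'an colouring every vertex is in a part of linear size, so a random red $H_1$-copy (an embedding of $H_1$ into a fixed part, which exists and is $\Theta(n^{v(H_1)})$ in number since $\chi(H_1)\ge 2$ means $H_1$ embeds monochromatically-red inside one part only if... ) — actually one must be slightly more careful here: red $H_1$ copies under the $H_2$-aligned Tur\'an colouring live within the parts, so this requires $H_1$ itself to be $(\chi(H_2)-1)$-partite-embeddable within a part, which fails in general; instead I would flip red edges and count destroyed red copies of $H_1$ that use at least one within-part edge, of which there are $\Theta(n^{v(H_1)})$ as long as $H_1$ has an edge, i.e. is non-empty. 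Threading this subtlety so the final inequality comes out with the advertised four-variable $g$ is where the real work lies, but it is all elementary counting once the random-flip perturbation is set up; the full details, including the explicit form of $g$, are deferred to Section~\ref{sec:neg}.
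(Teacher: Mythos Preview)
Your high-level strategy matches the paper's: perturb the $H_2$-aligned Tur\'an colouring by flipping each red (within-part) edge to blue independently with probability $\varepsilon$, then compare the linear-in-$\varepsilon$ gain from destroyed red copies of $H_1$ against the linear-in-$\varepsilon$ cost from newly created blue copies of $H_2$. But the execution has a genuine gap: you have misidentified the weights, and with them the mechanism by which $e(H_1)$ enters the final inequality.

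The weights in the objective are \emph{not} calibrated so that the two terms are comparable in a random colouring. They are explicitly
\[
\rho_1=(\chi(H_2)-1)^{v(H_1)-k(H_1)},\qquad \rho_2=(\chi(H_1)-1)^{v(H_2)-k(H_2)},
\]
chosen so that each Tur\'an colouring gives weighted value exactly $1$ in the limit. Your substitution $w_1/w_2\asymp 2^{-e(H_1)+e(H_2)}n^{v(H_1)-v(H_2)}$ is therefore false, and the chain of reasoning that ``reduces to'' an inequality in $e(H_1)$ through that substitution does not go through. The correct entry point of $e(H_1)$ is this: under the unperturbed $H_2$-Tur\'an colouring one has $\rho_1\cdot t(H_1,G)=1$, and after the $\varepsilon$-perturbation this becomes $(1-\varepsilon)^{e(H_1)}=1-e(H_1)\varepsilon+O(\varepsilon^2)$. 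So the gain is $e(H_1)\varepsilon$, not merely $\varepsilon$. On the cost side,
\[
\rho_2\cdot t(H_2,\overline{G_\varepsilon})=\rho_2\cdot(\chi(H_2)-1)^{-v(H_2)}\cdot K\cdot\varepsilon+O(\varepsilon^2),
\]
where $K$ is the number of \emph{nearly proper} $(\chi(H_2)-1)$-colourings of $H_2$ (those with exactly one monochromatic edge). That edge must be critical, so already the trivial estimate $K\le \crit(H_2)\cdot(\chi(H_2)-1)^{v(H_2)-1}$ gives $K\le \crit(H_2)\cdot h(\chi(H_2),v(H_2),k(H_2))$; the paper sharpens $h$ via Tomescu's conjecture (the Fox--He--Manners theorem), but that is not needed for the mere existence of some $g$. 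Comparing the linear terms then yields the proposition. Since \emph{both} Tur\'an colourings have normalized value $1$, producing a colouring of value below $1$ beats both at once --- you do not need a separate argument for the $H_1$-aligned Tur\'an colouring. Finally, your worry about embedding $H_1$ ``within a part'' is misplaced: under the $H_2$-Tur\'an colouring the red graph is a disjoint union of cliques, and every graph maps into a large enough clique, so $t(H_1,G)=(\chi(H_2)-1)^{-(v(H_1)-k(H_1))}$ with no further hypothesis on $H_1$.
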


In Section~\ref{sec:prelims}, we provide a formal definition of bonbon pairs and an off-diagonal variant of the Ramsey multiplicity constant. The proof of Theorem~\ref{th:hairy}, which is inspired by the proof of Theorem~\ref{th:FW} in~\cite{FoxWigderson23}, is provided in Sections~\ref{sec:hairy} and~\ref{sec:hairier}. First, in Section~\ref{sec:hairy}, we show that an optimal colouring has the ``approximate'' structure of a  Tur\'an colouring; i.e. the vertices can be partitioned into a small number of classes such that edges within the classes are nearly monochromatic. Then, in Section~\ref{sec:hairier}, we refine the structure of the colouring until it precisely matches that of a Tur\'an colouring. In Section~\ref{sec:neg}, we discuss various constructions of colourings which we use to prove a strong form of Proposition~\ref{prop:imbalance} (Theorem~\ref{th:imbalance}). We conclude the paper in Section~\ref{sec:concl} by proposing several open problems.

\section{Formal Definitions}
\label{sec:prelims}

Given graphs $H$ and $G$, a \emph{homomorphism} from $H$ to $G$ is a function $f:V(H)\to V(G)$ such that adjacent pairs of vertices in $H$ are mapped to adjacent pairs of vertices in $G$ and the \emph{homomorphism density} $t(H,G)$ is the probability that a random function from $V(H)$ to $V(G)$ is a homomorphism. That is, $t(H,G)$ is the number of homomorphisms from $H$ to $G$ divided by $v(G)^{v(H)}$. For graphs $H$ and $G$ with $v(H)\leq v(G)$, the \emph{injective homomorphism density} of $H$ in $G$, denoted $t_{\inj}(H,G)$, is the probability that a random injective function from $V(H)$ to $V(G)$ is a homomorphism. If $H$ is a fixed graph and $v(G)$ is large, then there are only $O(v(G)^{v(H)-1})$ non-injective functions from $V(H)$ to $V(G)$, and so 
\begin{equation}\label{eq:tinjt}t(H,G)=t_{\inj}(H,G)+o(1)\end{equation}
where the $o(1)$ term approaches zero as $v(G)\to\infty$. The following is essentially a rephrasing of~\cite[Definition~1.1]{FoxWigderson23}, except that we generalize it slightly to include disconnected graphs. 

\begin{defn}[Fox and Wigderson~\cite{FoxWigderson23}]
\label{def:bonbon}
A non-empty graph $H$ is said to be a \emph{bonbon} if there exists $n_0=n_0(H)$ such that, if $n\geq n_0$ and $G$ is an $n$-vertex graph such that 
\[t_{\inj}(H,G) + t_{\inj}(H,\overline{G})\]
is minimized over all $n$-vertex graphs, then either $G$ or $\overline{G}$ is a Tur\'an graph with $\chi(H)-1$ parts. 
\end{defn}

Let us now extend this definition to an off-diagonal setting. 

\begin{defn}
A pair $(H_1,H_2)$ of non-empty graphs is a \emph{bonbon pair} if there exists $n_0=n_0(H_1,H_2)$ such that, if $n\geq n_0$ and $G$ is an $n$-vertex graph such that
\[(\chi(H_2)-1)^{v(H_1)-k(H_1)}\cdot t_{\inj}(H_1,G)+(\chi(H_1)-1)^{v(H_2)-k(H_2)}\cdot t_{\inj}(H_2,\overline{G})\]
is minimized over all $n$-vertex graphs, then either $G$ is a Tur\'an graph with $\chi(H_1)-1$ parts or $\overline{G}$ is a Tur\'an graph with $\chi(H_2)-1$ parts. 
\end{defn}

Note that a graph $H$ is a bonbon if and only if $(H,H)$ is a bonbon pair. The inspiration behind the above definition comes from several recent papers focusing on off-diagonal generalizations of basic questions in Ramsey multiplicity. For instance, Parczyk, Pokutta, Spiegel and Szab\'o~\cite{ParczykPokuttaSpiegelSzabo22} proved asymptotic bounds on linear combinations of $t_{\inj}(K_s,G)$ and $t_{\inj}(K_t,\overline{G})$ for small $s$ and $t$ and Behague, Morrison and Noel~\cite{FirstPaper,SecondPaper} extended the notion of common graphs to an off-diagonal setting. Moss and Noel~\cite{MossNoel23++} recently introduced an off-diagonal notion of Ramsey multiplicity for general pairs of graphs. In our proof of Theorem~\ref{th:hairy}, we will need the following notion from~\cite{MossNoel23++}.

\begin{defn}[Moss and Noel~\cite{MossNoel23++}]
\label{def:lambdaRM}
For non-empty graphs $H_1$ and $H_2$ and $\lambda\in [0,2]$, define
\[c_\lambda(H_1,H_2):= \lim_{n\to\infty}\left[\min_{G:v(G)=n}\left(\lambda\cdot t(H_1,G)+(2-\lambda)\cdot t(H_2,\overline{G})\right)\right].\]
\end{defn}

\section{Proof of Theorem~\ref{th:hairy}: Rough Structure}
\label{sec:hairy}

The focus of this section is on obtaining an approximate version of Theorem~\ref{th:hairy} (Lemma~\ref{lem:roughStructure} below) which will be refined in the next section to complete the proof of the theorem. 

\begin{rem}
\label{rem:FW}
Fox and Wigderson~\cite{FoxWigderson23} cleverly avoided using the Graph Removal Lemma in their proof of Theorem~\ref{th:FW}. Doing so added a few steps to their argument, but resulted in much better bounds on $t_0$. To keep our paper to a reasonable length, and to differentiate it from~\cite{FoxWigderson23}, we have chosen to present a shorter argument which uses the Removal Lemma at the expense of having poorer control over $t_0$. We remark that better bounds on our $t_0$ could be obtained by following the proof of~\cite[Theorem~1.2]{FoxWigderson23} more closely.
\end{rem}

Throughout the next two sections, we let $q\in (0,1]$ and let $F_1$ and $F_2$ be non-bipartite graphs, each of which contains a critical edge, such that $\chi(F_1)+\chi(F_2)\geq7$. Define $f:=\max\{v(F_1),v(F_2)\}$ and $\chi:=\max\{\chi(F_1),\chi(F_2)\}$. We let $t_0$ be an integer chosen large with respect to $F_1,F_2$ and $q$, which will be specified later. Actually, $t_0$ is defined in terms of a throng of other parameters $\theta,\varepsilon,\delta,\beta,\xi,\gamma$ and $\tau$, where each parameter depends on $F_1,F_2$ and $q$ and the parameters that come before it in the list. The relationships between $F_1,F_2,q,\theta,\varepsilon,\delta,\beta,\xi,\gamma,\tau$ and $t_0$ will be revealed ``as needed'' throughout this section and the next, and will be summarized in the final proof of Theorem~\ref{th:hairy} at the end of Section~\ref{sec:hairier}.

Let $t_1,t_2\geq t_0$ and let $H_1$ be a $t_1$-hairy $F_1$ and $H_2$ be a $t_2$-hairy $F_2$ satisfying
\begin{equation}\label{eq:relSizes}\min\{v(H_1),v(H_2)\}\geq q\cdot \max\{v(H_1),v(H_2)\}.\end{equation}
Note that $\chi(H_i)=\chi(F_i)$ and $k(H_i)=k(F_i)$ for $i\in \{1,2\}$. For the sake of brevity, let 
\[\rho_1:=(\chi(H_2)-1)^{v(H_1)-k(H_1)}\]
\[\rho_2:=(\chi(H_1)-1)^{v(H_2)-k(H_2)}.\]
Note that, by definition,
\begin{equation}\label{eq:samesame}\rho_1\cdot \left(\frac{1}{\chi(F_2)-1}\right)^{v(H_1)-k(F_1)} = \rho_2\left(\frac{1}{\chi(F_1)-1}\right)^{v(H_2)-k(F_2)}=1.\end{equation}
We may assume that $H_1$ and $H_2$ have no singleton components, since any such components do not affect injective homomorphism densities into large enough graphs, nor do they affect $\rho_1$ or $\rho_2$ (since adding a singleton component to a graph $H$ increases both of $v(H)$ and $k(H)$ by one and does not affect $\chi(H)$). 

Let $n_0$ be a large integer which may depend on $H_1,H_2$ and all of the parameters discussed so far, and assume that $n\geq n_0$. For any graph $G$, define
\[m(H_1,H_2;G):=\rho_1\cdot t_{\inj}(H_1,G) + \rho_2\cdot t_{\inj}(H_2,\overline{G}).\]
Here, the letter $m$ stands for ``monochromatic.'' Let $G_1$ be a graph on $n$ vertices chosen so that $\min_{G:v(G)=n}(m(H_1,H_2;G)) = m(H_1,H_2;G_1)$ and let $G_2:=\overline{G_1}$. Our goal in the proof of Theorem~\ref{th:hairy} is to show that either $G_1$ is a Tur\'an graph with $\chi(F_1)-1$ parts or $G_2$ is a Tur\'an graph with $\chi(F_2)-1$ parts. Since $m(H_1,H_2;G_1)$ is at most the value of $m(H_1,H_2;G)$ when $G$ is an $n$-vertex Tur\'an graph with $\chi(F_1)-1$ parts, we have that
\begin{equation}\label{eq:G1good}m(H_1,H_2;G_1) \leq (1-o(1))\rho_2\left(\frac{1}{\chi(F_1)-1}\right)^{v(H_2)-k(F_2)} = 1-o(1)\end{equation}
where the last equality follows from \eqref{eq:samesame}. Note that $m(H_2,H_1;G_2) = m(H_1,H_2; G_1)$ and so it is also at most $1-o(1)$.

It is useful to classify vertices based on their degrees in $G_1$ and $G_2$. Let $V:=V(G_1)=V(G_2)$. For a graph $G$ with vertex set $V$ and a vertex $v \in V$, the \emph{degree} of $v$ in $G$ is the number of edges of $G$ that are connected to $v$, denoted by $d_G(v)$. For $i\in \{1,2\}$, we let $d_i(v):=d_{G_i}(v)$. When interpreting the next definition, recall that $\xi$ is one of the many parameters that appears throughout this section and the next and will be specified in the final proof of Theorem~\ref{th:hairy}. 

\begin{defn}
\label{def:V1V2}
For $i\in\{1,2\}$, define
\[V_i:=\left\{v\in V: d_i(v)\geq \left(1-\frac{1+2\xi}{\chi(F_i)-1}\right)(n-1)\right\}.\]
Also, let $V_0:=V\setminus (V_1\cup V_2)$ and $V_3=V_1\cap V_2$. 
\end{defn}

We may assume the following, without loss of generality.

\begin{ass}
\label{ass:V1<=V2}
$|V_1|\leq |V_2|$. 
\end{ass}

The focus of the rest of this section is on proving the following lemma which determines the ``rough structure'' of $G_1$ and $G_2$. For any two subsets $S,T\subseteq V$ and a graph $G$ with vertex set $V$, define $e_G(S,T)$ to be the number of ordered pairs $(u,v)\in S\times T$ such that $uv\in E(G)$ and let $e_G(S):=\frac{1}{2}e_G(S,S)$. For any $S,T\subseteq V$ and $i\in \{1,2\}$, we let $e_i(S,T):=e_{G_i}(S,T)$ and $e_i(S):=e_{G_i}(S)$.

\begin{lem}
\label{lem:roughStructure}
There exists a partition $A_1,A_2,\dots,A_{\chi(F_2)-1}$ of $V$ such that
\[\sum_{i=1}^{\chi(F_2)-1}e_2(A_i)\leq \varepsilon n^2.\]
\end{lem}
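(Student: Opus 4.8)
Everything flows from the near-optimality of $G_1$. By \eqref{eq:G1good}, $m(H_1,H_2;G_1)\le1-o(1)$, so (both summands being nonnegative) $\rho_1t_{\inj}(H_1,G_1)\le1-o(1)$ and $\rho_2t_{\inj}(H_2,G_2)\le1-o(1)$. Since $\rho_i=(\chi(F_{3-i})-1)^{v(H_i)-k(F_i)}$ is enormous once $t_i\ge t_0$, this pins $t_{\inj}(H_i,G_i)$ down very precisely, and the key mechanism — a ``pendant amplification'' principle — is that any macroscopic feature of $G_i$ unlike a Tur\'an colouring for colour $i$ gets multiplied, through the $t_i$ pendant edges of $H_i$, into far more monochromatic copies of $H_i$ than the weight $\rho_i$ can absorb. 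Two reductions are immediate. From $d_1(v)+d_2(v)=n-1$ and the fact that $\chi(F_1)+\chi(F_2)\ge7$ forces $\tfrac{1}{\chi(F_1)-1}+\tfrac{1}{\chi(F_2)-1}\le\tfrac56<1$, one gets (for $\xi$ small) that $V_3=V_1\cap V_2=\emptyset$, since $v\in V_3$ would give $d_1(v)+d_2(v)\ge\big(2-(1+2\xi)(\tfrac{1}{\chi(F_1)-1}+\tfrac{1}{\chi(F_2)-1})\big)(n-1)>n-1$. Also, a local-modification argument using the minimality of $G_1$ (recolouring a linear set of vertices), combined with the amplification principle, shows $|V_0|\le\theta n$. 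Thus $V=V_0\sqcup V_1\sqcup V_2$ with $|V_0|$ small.

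\textbf{Pendant amplification.} For $i\in\{1,2\}$, set $W_i:=\{v\in V:d_i(v)>(1+\xi)\tfrac{n}{\chi(F_{3-i})-1}\}$, and for $u\in V(F_i)$ let $s_u$ be the number of pendant edges of $H_i$ at $u$, so $\sum_us_u=t_i$. Then $\hom(H_i,G_i)=\sum_{\psi}\prod_{u\in V(F_i)}d_i(\psi(u))^{s_u}$, the sum over homomorphisms $\psi\colon F_i\to G_i$; restricting to those $\psi$ with image in $W_i$ gives $\hom(H_i,G_i)\ge\hom(F_i,G_i[W_i])\cdot\big((1+\xi)\tfrac{n}{\chi(F_{3-i})-1}\big)^{t_i}$. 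Since $\hom(H_i,G_i)=n^{v(H_i)}(t_{\inj}(H_i,G_i)+o(1))\le\tfrac{n^{v(H_i)}}{(\chi(F_{3-i})-1)^{v(H_i)-k(F_i)}}(1+o(1))$ by \eqref{eq:tinjt}, and $v(H_i)=v(F_i)+t_i$, we obtain
\[\hom(F_i,G_i[W_i])\le\frac{(1+o(1))\,n^{v(F_i)}}{(\chi(F_{3-i})-1)^{v(F_i)-k(F_i)}\,(1+\xi)^{t_i}}\le\tau\,n^{v(F_i)}\]
for $n$ large, because $t_0$ is chosen huge relative to $\xi$ and $\tau$. A further short computation with $d_1(v)+d_2(v)=n-1$ and $\tfrac1{\chi(F_1)-1}+\tfrac1{\chi(F_2)-1}<1$ gives $V_i\subseteq W_i$ and $V\setminus W_i\subseteq W_{3-i}$, so $W_1\cup W_2=V$: in each colour, the ``high-degree'' vertices host essentially no copies of the corresponding core $F_i$.

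\textbf{From sparsity to the partition.} I claim $|V_2|=(1-o(1))n$ (this is the hard part; see below). Granting it: every $v\in V_2$ has $d_2(v)\ge(1-\tfrac{1+2\xi}{\chi(F_2)-1})(n-1)$, hence misses only $o(|V_2|)$ vertices of $V_2$ in colour $2$, so $e_2(V_2)\ge(1-\tfrac{1}{\chi(F_2)-1}-\tfrac{2\xi}{\chi(F_2)-1}-o(1))\binom{|V_2|}{2}$; moreover $\hom(F_2,G_2[V_2])\le\hom(F_2,G_2[W_2])\le\tau n^{v(F_2)}$, which is a $\tau'$-fraction of $|V_2|^{v(F_2)}$ for a small $\tau'$. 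Since $F_2$ is non-bipartite, the quantitative Erd\H{o}s--Simonovits stability theorem applies — the edge-count deficit $\tfrac{2\xi}{\chi(F_2)-1}$ and the copy-count parameter being small enough because $\varepsilon$ precedes $\xi,\tau$ in the hierarchy — and yields a partition of $V_2$ into $\chi(F_2)-1$ parts with at most $\tfrac{\varepsilon}{2}n^2$ colour-$2$ edges inside. Distributing the $o(n)$ vertices of $V_0\cup V_1$ arbitrarily among these parts adds $o(n^2)<\tfrac{\varepsilon}{2}n^2$ further colour-$2$ edges, producing the desired $A_1,\dots,A_{\chi(F_2)-1}$.

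\textbf{The main obstacle: showing $|V_2|=(1-o(1))n$.} This is where the critical-edge hypotheses (and, I expect, the minimality of $G_1$ beyond the Tur\'an bound) are really used. The amplification bound only says $G_1[W_1]$ is $F_1$-sparse, which is useless if $G_1[W_1]$ happens to be sparse, so one cannot symmetrically run the stability argument on the colour-$1$ side when $|V_1|$ is merely macroscopic (Assumption~\ref{ass:V1<=V2} only gives $|V_1|\le|V_2|$, not $|V_1|$ close to $n$). The route I would take: suppose $|V_1|\ge\beta n$. If $G_1[V_1]$ is dense, then using a critical edge $xy$ of $F_1$ — so $\chi(F_1-xy)=\chi(F_1)-1$ and, after deleting the vertex $x$, $\chi(F_1-x)\le\chi(F_1)-1$ — one embeds the $(\chi(F_1)-1)$-colourable graph $F_1-x$ inside $V_1$ and completes it to a copy of $F_1$ by choosing $x$ among the $\Omega(n)$ common colour-$1$ neighbours of its would-be neighbours, keeping the pendant-bearing vertices of $H_1$ inside $V_1$ (where colour-$1$ degrees are $\Omega(n)$); this produces $\Omega(n^{v(F_1)})$ copies of $F_1$ through high colour-$1$-degree vertices, contradicting the amplification bound. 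If instead $G_1[V_1]$ is sparse, then $G_2[V_1]$ is dense and one argues dually, using a critical edge of $F_2$ to inflate $t_{\inj}(H_2,G_2)$; the intermediate regime, in which neither colour is dense on $V_1$, is excluded by a recolouring argument exploiting that $G_1$ is a \emph{global} minimizer. As throughout, the inequalities close up only because $\chi(F_1)+\chi(F_2)\ge7$ keeps $\tfrac1{\chi(F_1)-1}+\tfrac1{\chi(F_2)-1}$ strictly below $1$ (worst case $\tfrac56$, at $\{\chi(F_1),\chi(F_2)\}=\{3,4\}$), and the careful ordering of the parameters $\theta,\varepsilon,\delta,\beta,\xi,\gamma,\tau,t_0$ is exactly what makes every amplification step beat the relevant weight.
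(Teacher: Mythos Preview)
Your overall architecture --- pendant amplification (your bound on $\hom(F_i,G_i[W_i])$ is essentially the paper's Lemma~\ref{lem:inducedSubgraph}), then ``most vertices lie in $V_2$'', then stability --- matches the paper. The last step differs only cosmetically: the paper uses the Removal Lemma to pass from ``few copies of $F_2$'' to ``zero copies in a nearby subgraph'' before invoking stability, whereas you invoke a quantitative stability theorem directly; either works.

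There is, however, a genuine gap in your ``main obstacle'' section. Your Case~2 (``if $G_1[V_1]$ is sparse, then $G_2[V_1]$ is dense and one argues dually, using a critical edge of $F_2$ to inflate $t_{\inj}(H_2,G_2)$'') does not go through. Pendant amplification for $H_2$ requires the vertices hosting $F_2$ to lie in $W_2$, i.e.\ to have $d_2$ exceeding $(1+\xi)n/(\chi(F_1)-1)$. But vertices of $V_1$ have only the \emph{upper} bound $d_2(v)\le\frac{1+2\xi}{\chi(F_1)-1}(n-1)$; nothing prevents $d_2(v)$ from being tiny (indeed a vertex with $d_1(v)=n-1$ lies in $V_1$ with $d_2(v)=0$). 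So even with many copies of $F_2$ inside $G_2[V_1]$, you cannot extend them to copies of $H_2$ with the required gain, and the intended contradiction evaporates. Your ``intermediate regime excluded by recolouring'' is also left unspecified, and your $|V_0|\le\theta n$ step (``recolouring a linear set of vertices'') is not an argument; the paper instead observes that $V_0\subseteq W_1\cap W_2$, so amplification bounds \emph{both} $t(F_i,G_i[V_0])$ from above, and then appeals to the off-diagonal Ramsey multiplicity constant $c_1(F_1,F_2)>0$ to force one of them to be bounded below.

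The paper's route for $|V_1|<25\xi n$ is quite different from yours and does not use critical edges at all (the paper remarks explicitly that Lemma~\ref{lem:roughStructure} needs neither the critical-edge hypothesis nor \eqref{eq:relSizes}). Instead, Erd\H{o}s--Simonovits \emph{supersaturation} plus amplification shows that, whenever $|V_i|\ge25\xi n$, the density of $G_i[V_i]$ is at most $1-\frac{1-\xi}{\chi(F_i)-1}$ --- for each $i$ separately, using only that $V_i\subseteq W_i$. Under Assumption~\ref{ass:V1<=V2}, assume $|V_1|\ge25\xi n$ (hence $|V_2|\ge25\xi n$) and set $\eta_i:=e_i(V_1,V_2)/(|V_1||V_2|)$. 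Bounding $\sum_{v\in V_1}d_1(v)$ from below by the defining inequality of $V_1$ and from above by $2e_1(V_1)+\eta_1|V_1||V_2|+|V_1||V_0|$ gives
\[
\Bigl(1-\eta_1-\tfrac{1}{\chi(F_1)-1}\Bigr)|V_2|<\tfrac{5\xi n}{\chi(F_1)-1},
\]
and symmetrically with $1\leftrightarrow2$. A short case analysis on $\eta_1,\eta_2$ (using $\eta_1+\eta_2=1$ and that $\chi(F_1)+\chi(F_2)\ge7$ forces one of the $\chi(F_j)\ge4$) then yields $|V_1|<25\xi n$. This double-sided degree-count is the missing idea: rather than seeking a contradiction from a single ``dense/sparse'' case on $V_1$, one caps $e_i(V_i)$ on \emph{both} sides and lets the two inequalities interact.
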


It is worth noting that the proof of Lemma~\ref{lem:roughStructure} does not require $F_1$ and $F_2$ to have critical edges, nor does it require the inequality \eqref{eq:relSizes}; these conditions come into play when seeking the exact structure of an optimal colouring in the next section.

A high-level overview of the proof of Lemma~\ref{lem:roughStructure} is as follows. We first show that $V_3=\emptyset$ and that $V_0$ and $V_1$ are both quite small; specifically $|V_0|\leq \xi n$ and $|V_1|\leq 25\xi n$. Therefore, most of the vertices reside in $V_2$, and thus have a large degree in $G_2$. If the density of $F_2$ in $G_2$ is sufficiently far from zero, then there must be several copies of $F_2$ whose vertices are contained in $V_2$, and each of these copies can be ``extended'' to a copy of $H_2$ in $G_2$ in many ways due to the high $G_2$-degree of vertices in $V_2$. This would lead to a large density of $H_2$ in $G_2$, which would violate \eqref{eq:G1good}. The Graph Removal Lemma then implies that $G_2$ can be made $F_2$-free by deleting a small proportion of its edges. After deleting these edges, we obtain a graph with close to $\left(1-\frac{1}{\chi(F_2)-1}\right)\binom{n}{2}$ edges which is $F_2$-free. The classical Erd\H{o}s--Simonovits Stability Theorem then states that such a graph must be ``close'' to a complete $(\chi(F_2)-1)$-partite graph, which gives us Lemma~\ref{lem:roughStructure}. The rest of the section is devoted to fleshing out the details of these arguments.

\subsection{Analyzing Degrees}

We show that $V_1$ and $V_2$ have empty intersection. The following assumption is useful for proving this, and will be used again later as well:
\begin{equation}
\label{eq:xi1}
0<\xi< \frac{1}{39}.
\end{equation}

\begin{lem}
\label{lem:V1V2disjoint}
$V_3=V_1\cap V_2=\emptyset$.
\end{lem}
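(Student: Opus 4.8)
The plan is a short degree count exploiting that $G_2=\overline{G_1}$, so that $d_1(v)+d_2(v)=n-1$ for every $v\in V$. Suppose towards a contradiction that some vertex $v$ lies in $V_3=V_1\cap V_2$. By Definition~\ref{def:V1V2}, membership of $v$ in $V_1$ and in $V_2$ gives, respectively,
\[
d_1(v)\geq\left(1-\frac{1+2\xi}{\chi(F_1)-1}\right)(n-1)\qquad\text{and}\qquad d_2(v)\geq\left(1-\frac{1+2\xi}{\chi(F_2)-1}\right)(n-1).
\]
Adding these and using $d_1(v)+d_2(v)=n-1$ yields
\[
n-1\;\geq\;\left(2-(1+2\xi)\left(\tfrac{1}{\chi(F_1)-1}+\tfrac{1}{\chi(F_2)-1}\right)\right)(n-1),
\]
so (for $n$ large, so that $n-1>0$) it suffices to show that the coefficient on the right is strictly greater than $1$, i.e. that $(1+2\xi)\left(\tfrac{1}{\chi(F_1)-1}+\tfrac{1}{\chi(F_2)-1}\right)<1$.

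To bound the reciprocal sum I would use that $F_1$ and $F_2$ are non-bipartite, so $\chi(F_1),\chi(F_2)\geq 3$, together with the hypothesis $\chi(F_1)+\chi(F_2)\geq 7$, which forces $\max\{\chi(F_1),\chi(F_2)\}\geq 4$. Hence $\tfrac{1}{\chi(F_1)-1}+\tfrac{1}{\chi(F_2)-1}\leq\tfrac12+\tfrac13=\tfrac56$. Combining with $0<\xi<\tfrac{1}{39}$ from \eqref{eq:xi1}, we get $(1+2\xi)\cdot\tfrac56<\tfrac65\cdot\tfrac56=1$, as required; the displayed inequality then becomes $n-1>n-1$, a contradiction. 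Thus $V_3=\emptyset$.

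I do not expect a genuine obstacle here: the whole lemma is essentially the complementary-degree identity plus one elementary arithmetic bound. The only delicate point is keeping the final inequality strict, and this is exactly where the hypothesis $\chi(F_1)+\chi(F_2)\geq 7$ is used — if both chromatic numbers were allowed to be $3$, the reciprocal sum would equal $1$ and the estimate would only give $d_1(v)+d_2(v)\geq(1-2\xi)(n-1)$, which is no contradiction. (For this particular lemma the weaker bound $\xi<\tfrac{1}{10}$ would already suffice; the stronger assumption \eqref{eq:xi1} is presumably needed later in the section.)
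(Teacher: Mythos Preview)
Your proof is correct and follows essentially the same approach as the paper: a degree count using $d_1(v)+d_2(v)=n-1$, the bound $\tfrac{1}{\chi(F_1)-1}+\tfrac{1}{\chi(F_2)-1}\leq \tfrac{5}{6}$ from $\chi(F_i)\geq 3$ and $\chi(F_1)+\chi(F_2)\geq 7$, and the assumption \eqref{eq:xi1} to force a contradiction. Your closing remark that $\xi<\tfrac{1}{10}$ already suffices is exactly the threshold the paper's computation produces as well.
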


\begin{proof}
Suppose not and let $v\in V_1\cap V_2$. Then, since $\chi(F_1),\chi(F_2)\geq3$ and $\chi(F_1)+\chi(F_2)\geq7$, we have
\[n-1= d_1(v)+d_2(v) \geq \left(1-\frac{1+2\xi}{\chi(F_1)-1}\right)(n-1)+\left(1-\frac{1+2\xi}{\chi(F_2)-1}\right)(n-1)\]
\[\geq\left(1-\frac{1+2\xi}{2}\right)(n-1)+\left(1-\frac{1+2\xi}{3}\right)(n-1) = \left(\frac{7}{6} - \frac{5\xi}{3}\right)(n-1).\]
This implies that $\xi\geq 1/10$; however, this contradicts \eqref{eq:xi1}.
\end{proof}

We obtain a bound on the degrees of vertices in $V_i$ for $i\in \{1,2\}$ via a similar argument.

\begin{obs}
\label{obs:Vihigh}
Let $\{i,j\}=\{1,2\}$. If $v\in V_i$, then $d_i(v)>\left(\frac{5}{4}\cdot \frac{1+\xi}{\chi(F_j)-1}\right)n$.
\end{obs}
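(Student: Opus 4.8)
The plan is to read the bound straight off Definition~\ref{def:V1V2} and then split into two cases according to the value of $\chi(F_i)$, using the hypothesis $\chi(F_1)+\chi(F_2)\geq 7$ to control $\chi(F_j)$; this is in the same spirit as the proof of Lemma~\ref{lem:V1V2disjoint}. Recall that $v\in V_i$ means
\[
d_i(v)\ \geq\ \left(1-\frac{1+2\xi}{\chi(F_i)-1}\right)(n-1)\ =\ \frac{\chi(F_i)-2-2\xi}{\chi(F_i)-1}(n-1).
\]
Since $x\mapsto \frac{x-2-2\xi}{x-1}$ is increasing for $x>1$, I would lower-bound this coefficient using the smallest admissible value of $\chi(F_i)$, and upper-bound the target coefficient $\frac{5}{4}\cdot\frac{1+\xi}{\chi(F_j)-1}$ using the smallest admissible value of $\chi(F_j)$. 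The crucial structural input is that $\chi(F_i)$ and $\chi(F_j)$ cannot both equal $3$.

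Concretely: if $\chi(F_i)=3$, then $\chi(F_j)\geq 4$, so the displayed quantity is at least $\frac{1-2\xi}{2}(n-1)$ while $\frac{5}{4}\cdot\frac{1+\xi}{\chi(F_j)-1}n\leq \frac{5(1+\xi)}{12}n$; a one-line computation gives $\frac{1-2\xi}{2}>\frac{5(1+\xi)}{12}$ exactly when $\xi<\frac{1}{17}$. If instead $\chi(F_i)\geq 4$, then the displayed quantity is at least $\frac{2-2\xi}{3}(n-1)$, while $\chi(F_j)\geq 3$ gives $\frac{5}{4}\cdot\frac{1+\xi}{\chi(F_j)-1}n\leq \frac{5(1+\xi)}{8}n$, and $\frac{2-2\xi}{3}>\frac{5(1+\xi)}{8}$ exactly when $\xi<\frac{1}{31}$. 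In both cases the bound $\xi<\frac{1}{39}$ from~\eqref{eq:xi1} leaves a positive multiplicative gap between the two coefficients; since $n\geq n_0$ is large, that gap absorbs the difference between $n-1$ and $n$ and yields the strict inequality $d_i(v)>\frac{5}{4}\cdot\frac{1+\xi}{\chi(F_j)-1}n$.

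There is no genuine obstacle here — the argument is a short degree estimate. The only points requiring care are bookkeeping of which of $\chi(F_i),\chi(F_j)$ the constraint $\chi(F_1)+\chi(F_2)\geq 7$ forces to be at least $4$, and confirming that the multiplicative constant $\frac{5}{4}$ in the statement is small enough that both cases survive under the single threshold $\xi<\frac{1}{39}$ already assumed in~\eqref{eq:xi1} (the binding case being $\chi(F_i)\geq 4$, which needs only $\xi<\frac{1}{31}$).
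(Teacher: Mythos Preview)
Your proof is correct and follows essentially the same approach as the paper: both arguments combine the degree lower bound from Definition~\ref{def:V1V2} with the constraints $\chi(F_1),\chi(F_2)\geq 3$ and $\chi(F_1)+\chi(F_2)\geq 7$, and check that the resulting inequality forces $\xi$ above the threshold in~\eqref{eq:xi1}. The only cosmetic differences are that the paper argues by contradiction, absorbs the $(n-1)$-versus-$n$ discrepancy up front by passing from $2\xi$ to $3\xi$, and writes out only the binding case $(\chi(F_i),\chi(F_j))=(4,3)$, whereas you argue directly, keep $2\xi$ and handle the $(n-1)$-versus-$n$ issue at the end, and make the case split on $\chi(F_i)$ explicit.
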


\begin{proof}
If not, then, since $v\in V_i$ and $n$ is large,
\[\left(1-\frac{1+3\xi}{\chi(F_i)-1}\right)n\leq \left(1-\frac{1+2\xi}{\chi(F_i)-1}\right)(n-1)\leq d_i(v)\leq \left(\frac{5}{4}\cdot \frac{1+\xi}{\chi(F_j)-1}\right)n.\]
Since $\chi(F_1),\chi(F_2)\geq3$ and $\chi(F_1)+\chi(F_2)\geq7$, this implies that
\[1 - \frac{1+3\xi}{3}\leq  \frac{5}{4}\cdot \frac{1+\xi}{2}.\]
However, this can only hold if $\xi\geq 1/39$ which contradicts \eqref{eq:xi1}. 
\end{proof}

Next, we prove that both $V_0$ and $V_1$ are small. Given a graph $G$ on vertex set $V$ and a set $S\subseteq V$, let $G[S]$ be the subgraph of $G$ \emph{induced} by $S$; i.e. the graph with vertex set $V$ and edge set $\{uv\in E(G): u,v\in S\}$. The next lemma says that, for $i\in \{1,2\}$, there cannot be a fairly sizeable set $S$ such that $t(F_i,G_i[S])$ is bounded away from zero and $d_i(v)$ is relatively large for every $v\in S$. To prove this, we assume that $t_0$ is chosen large enough so that the following holds:
\begin{equation}
\label{eq:t0bound1}
(1+\xi)^{t_0}>3/\tau.
\end{equation}

\begin{lem}
\label{lem:inducedSubgraph}
Let $\{i,j\}=\{1,2\}$. If $S$ is a non-empty subset of $V$ such that
\[d_i(v)\geq \left(\frac{1+\xi}{\chi(F_j)-1}\right)n\]
for all $v\in S$, then $t(F_i,G_i[S])\leq \tau\cdot\left(n/|S|\right)^{v(F_i)}$.
\end{lem}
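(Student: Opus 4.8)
The plan is to prove the contrapositive: if $t(F_i,G_i[S]) > \tau\cdot(n/|S|)^{v(F_i)}$, then some vertex of $S$ has $G_i$-degree less than $\bigl(\frac{1+\xi}{\chi(F_j)-1}\bigr)n$, and in fact I will derive a contradiction with \eqref{eq:G1good} directly. The key idea is an ``extension'' or ``blow-up'' argument: a copy of $F_i$ inside $G_i[S]$ can be grown into a copy of $H_i$ in $G_i$ by attaching the $t_i$ pendant (hairy) vertices one at a time, and each attachment step has many choices because the relevant vertex of $S$ has large $G_i$-degree.

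The first step is to count copies of $F_i$. The hypothesis $t(F_i,G_i[S]) > \tau(n/|S|)^{v(F_i)}$ says that the number of homomorphisms from $F_i$ into $G_i[S]$ is more than $\tau(n/|S|)^{v(F_i)}\cdot |S|^{v(F_i)} = \tau n^{v(F_i)}$; equivalently, writing this in terms of homomorphism density into all of $G_i$, we get $t(F_i,G_i)\geq t(F_i,G_i[S]) \cdot (|S|/n)^{v(F_i)} \cdot (\text{something})$ — more cleanly, there are at least $\tau n^{v(F_i)}$ homomorphic copies of $F_i$ landing entirely inside $S$. Now extend: $H_i$ is a $t_i$-hairy $F_i$, built by adding $t_i \geq t_0$ pendant edges one at a time. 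Fix a homomorphic copy $\phi$ of $F_i$ in $G_i[S]$. When we add the $\ell$-th hairy edge, it attaches to some already-placed vertex $w$; if $w$ is an original vertex of $F_i$ it lies in $S$, so $d_i(w)\geq \bigl(\tfrac{1+\xi}{\chi(F_j)-1}\bigr)n$; if $w$ is a previously-added hairy vertex, one needs the (easy, since $n$ is huge and $t_i$ is a constant) bound that it still has $\Omega(n)$ neighbours available — actually, more carefully, since a hairy vertex is attached to a vertex that itself has large degree, one can choose the hairy vertices greedily among a large common neighbourhood. In any case each of the $t_i$ extension steps has at least $\bigl(\tfrac{1+\xi}{\chi(F_j)-1}\bigr)n - O(f) \geq \tfrac{1}{\chi(F_j)-1} n$ valid choices (absorbing the constant error using $n\geq n_0$ and the slack $\xi$). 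Hence the number of homomorphisms from $H_i$ into $G_i$ is at least
\[
\tau n^{v(F_i)}\cdot \left(\frac{n}{\chi(F_j)-1}\right)^{t_i} \cdot (\text{correction for the }(1+\xi)\text{ slack vs.\ error terms}),
\]
and a cleaner bookkeeping gives at least $\tfrac{\tau}{2}\cdot (1+\xi)^{t_i} \cdot \bigl(\tfrac{1}{\chi(F_j)-1}\bigr)^{t_i} n^{v(F_i)+t_i} = \tfrac{\tau}{2}(1+\xi)^{t_i}\bigl(\tfrac{1}{\chi(F_j)-1}\bigr)^{v(H_i)-k(F_i)} n^{v(H_i)}$, using $v(H_i)=v(F_i)+t_i$ and $\chi(F_i)\le\chi$, $k(F_i)$ fixed.

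Now I convert this to a density statement and contradict \eqref{eq:G1good}. Dividing by $n^{v(H_i)}$ and using \eqref{eq:tinjt}, $t_{\inj}(H_i,G_i) \geq \tfrac{\tau}{3}(1+\xi)^{t_i}\bigl(\tfrac{1}{\chi(F_i)-1}\bigr)^{v(H_i)-k(F_i)}$ for $n$ large — wait, I must be careful with which $\chi$ appears; the extension uses $\chi(F_j)$, but since $\{i,j\}=\{1,2\}$ and I want to feed this into $m(H_1,H_2;G_1)$, I track $\rho_i$: multiplying by $\rho_i$ and invoking \eqref{eq:samesame} in the form $\rho_i\bigl(\tfrac{1}{\chi(F_j)-1}\bigr)^{v(H_i)-k(F_i)} = 1$ (this is exactly the pairing identity, with $j$ the ``other'' index), we obtain $m(H_1,H_2;G_1)\geq \rho_i t_{\inj}(H_i,G_i) \geq \tfrac{\tau}{3}(1+\xi)^{t_i}$, because the two colour-contributions are nonnegative. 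Since $t_i\geq t_0$, inequality \eqref{eq:t0bound1} gives $(1+\xi)^{t_i}\geq (1+\xi)^{t_0} > 3/\tau$, so $m(H_1,H_2;G_1) > 1$, contradicting \eqref{eq:G1good} for $n$ large enough. This contradiction proves the lemma.

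The main obstacle, and the only place requiring real care, is the extension count: making precise that the hairy vertices can be added with $\approx n/(\chi(F_j)-1)$ choices each, simultaneously for all $t_i$ of them, while only ``spending'' a bounded ($O(f + t_i)$) additive loss — which the geometric slack factor $(1+\xi)^{t_i}$ comfortably absorbs since $t_i$ is large. One clean way is: first restrict to copies $\phi$ of $F_i$ in $S$ (there are $\geq\tau n^{v(F_i)}$ of them), then note every vertex touched by a hairy edge lies within bounded distance of $S$ and has $G_i$-degree $\geq \bigl(\tfrac{1+\xi}{\chi(F_j)-1}\bigr)n$ either by hypothesis (if in $S$) or by a trivial pigeonhole (a hairy vertex is free to be any neighbour of its unique attachment point, and we may also insist it avoids the $O(f+t_i)$ already-used vertices). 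Choosing the hairy vertices in the build order, the $\ell$-th has at least $\bigl(\tfrac{1+\xi}{\chi(F_j)-1}\bigr)n - (v(F_i)+\ell) \geq \bigl(\tfrac{1}{\chi(F_j)-1}\bigr)n\cdot(1+\tfrac{\xi}{2})$ choices for $n$ large relative to $t_0$, and taking the product over $\ell=1,\dots,t_i$ yields the claimed bound with room to spare.
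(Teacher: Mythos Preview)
Your approach is essentially the same as the paper's: assume the density bound fails, extend each copy of $F_i$ in $G_i[S]$ to many copies of $H_i$ in $G_i$ using the degree hypothesis, and contradict \eqref{eq:G1good} via \eqref{eq:samesame} and \eqref{eq:t0bound1}. Two points will let you strip out most of the hedging. First, by the definition of a $t$-hairy graph, every added pendant edge has its anchor in $V(F_i)$; the case ``$w$ is a previously-added hairy vertex'' never occurs, so the unique neighbour of each vertex in $V(H_i)\setminus V(F_i)$ is always mapped into $S$ and the degree hypothesis applies directly. Second, because you are bounding the \emph{homomorphism} density $t(H_i,G_i)$ (and only converting to $t_{\inj}$ at the end via \eqref{eq:tinjt}), there is no need to avoid already-used vertices during the extension; the bound is simply
\[
t(H_i,G_i)\ \geq\ (|S|/n)^{v(F_i)}\, t(F_i,G_i[S])\left(\frac{1+\xi}{\chi(F_j)-1}\right)^{v(H_i)-v(F_i)} > \tau(1+\xi)^{t_i}\left(\frac{1}{\chi(F_j)-1}\right)^{v(H_i)-v(F_i)},
\]
with no correction factors. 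Your subsequent use of $v(F_i)\geq k(F_i)$, \eqref{eq:t0bound1}, \eqref{eq:samesame} and \eqref{eq:G1good} is exactly what the paper does.
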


\begin{proof}
Suppose to the contrary that the hypotheses hold but
\[t(F_i,G_i[S])> \tau\cdot\left(n/|S|\right)^{v(F_i)}.\]
The probability that a uniformly random function $\varphi$ from $V(H_i)$ to $V$ is a homomorphism from $H_i$ to $G_i$ is at least the probability that the restriction of $\varphi$ to $V(F_i)$ is a homomorphism from $F_i$ to $G_i[S]$ multiplied by the probability that, for every $w\in V(H_i)\setminus V(F_i)$, if $v$ is the unique neighbour of $w$ in $H_i$, then $\varphi(w)$ is adjacent to $\varphi(v)$ in $G_i$. Thus,
\[t(H_i,G_i)\geq \left(|S|/n\right)^{v(F_i)}t(F_i,G_i[S])\left(\frac{1+\xi}{\chi(F_j)-1}\right)^{v(H_i)-v(F_i)}>\tau\left(\frac{1+\xi}{\chi(F_j)-1}\right)^{v(H_i)-v(F_i)}\]
\[=\tau(1+\xi)^{t_i}\left(\frac{1}{\chi(F_j)-1}\right)^{v(H_i)-v(F_i)} > 3\left(\frac{1}{\chi(F_j)-1}\right)^{v(H_i)-k(F_i)}\]
where the last inequality follows from \eqref{eq:t0bound1} and the facts that $t_1,t_2\geq t_0$ and $v(F_i)\geq k(F_i)$. So, by \eqref{eq:tinjt} and the fact that $n$ is large, we have
\[t_{\inj}(H_i,G_i) = t(H_i,G_i)-o(1) > 2\left(\frac{1}{\chi(F_j)-1}\right)^{v(H_i)-k(F_i)}.\]
Consequently, by \eqref{eq:samesame},
\[m(H_1,H_2;G_1)=\rho_1\cdot t_{\inj}(H_1,G_1) + \rho_2\cdot t_{\inj}(H_2,G_2) > 2\]
which contradicts \eqref{eq:G1good} and thus completes the proof.
\end{proof}

Next, we prove that $V_0$ is quite small. For this, we assume the following bound on $\tau$:
\begin{equation}
\label{eq:taua}
0<\tau<\frac{c_1(F_1,F_2)\cdot \xi^{f}}{4}.
\end{equation}
Note that $c_1(F_1,F_2)>0$ by~\cite[Lemma~2.11]{MossNoel23++} and so it is possible to choose $\tau$ to satisfy this condition. The next lemma is analogous to~\cite[Claim~3.3]{FoxWigderson23}.

\begin{lem}
\label{lem:V0small}
$|V_0|< \xi n$.
\end{lem}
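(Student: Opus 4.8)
\textbf{Proof proposal for Lemma~\ref{lem:V0small}.}

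The plan is to argue by contradiction: assume $|V_0|\geq \xi n$ and derive a lower bound on $m(H_1,H_2;G_1)$ that violates \eqref{eq:G1good}. The key observation is that every vertex $v\in V_0$ fails to lie in $V_1$ \emph{and} fails to lie in $V_2$, which means $d_1(v) < \left(1-\frac{1+2\xi}{\chi(F_1)-1}\right)(n-1)$ and $d_2(v) < \left(1-\frac{1+2\xi}{\chi(F_2)-1}\right)(n-1)$. Since $d_1(v)+d_2(v)=n-1$, each of $d_1(v)$ and $d_2(v)$ is also bounded \emph{below} (by $n-1$ minus the other upper bound), so a positive proportion of each vertex's edges lie in each of $G_1$ and $G_2$; more precisely, I would extract constants $c,c'>0$ (depending only on $\chi(F_1),\chi(F_2)$ and $\xi$) with $d_i(v)\geq c\cdot n$ for all $v\in V_0$ and $i\in\{1,2\}$.

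First I would apply Lemma~\ref{lem:inducedSubgraph} to $S=V_0$ — after checking that the degree hypothesis $d_i(v)\geq \left(\frac{1+\xi}{\chi(F_j)-1}\right)n$ holds for all $v\in V_0$, which follows from the lower bounds just described together with the arithmetic constraints $\chi(F_1),\chi(F_2)\geq 3$ and $\chi(F_1)+\chi(F_2)\geq 7$ (this is the same style of elementary inequality chase as in Observation~\ref{obs:Vihigh}, and is where \eqref{eq:xi1} gets used). Lemma~\ref{lem:inducedSubgraph} then gives $t(F_i,G_i[V_0])\leq \tau\cdot(n/|V_0|)^{v(F_i)}\leq \tau\cdot \xi^{-f}$ for $i\in\{1,2\}$, using $|V_0|\geq \xi n$ and $v(F_i)\leq f$. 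Summing the two bounds and comparing with the definition of $c_1(F_1,F_2)$ from Definition~\ref{def:lambdaRM}: the graph $G_1[V_0]$ on $|V_0|\geq \xi n \to\infty$ vertices satisfies, by \eqref{eq:tinjt} (or directly in homomorphism density), $t(F_1,G_1[V_0])+t(F_2,\overline{G_1[V_0]})\geq t(F_1,G_1[V_0])+t(F_2,G_2[V_0]) - o(1)\geq c_1(F_1,F_2)-o(1)$ (here one must note $\overline{G_1[V_0]}\supseteq$ the relevant part of $G_2$, or work with $G_2[V_0]=\overline{G_1}[V_0]$ directly, so that the sum is at least $c_1$ up to lower-order terms). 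Hence $c_1(F_1,F_2)\leq 2\tau\cdot\xi^{-f}+o(1)$, contradicting \eqref{eq:taua}, which was chosen precisely so that $2\tau\cdot \xi^{-f} < c_1(F_1,F_2)/2$.

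I expect the main obstacle to be the bookkeeping around which colour's induced subgraph one feeds into Definition~\ref{def:lambdaRM}: $c_\lambda(H_1,H_2)$ is defined with $t(H_1,G)+t(H_2,\overline{G})$ for a \emph{single} host $G$, whereas Lemma~\ref{lem:inducedSubgraph} produces separate bounds on $t(F_1,G_1[V_0])$ and $t(F_2,G_2[V_0])$; one needs that $G_2[V_0]$ is exactly the complement (within $V_0$) of $G_1[V_0]$, which is true since $G_2=\overline{G_1}$, so the sum genuinely matches $\lambda=1$ in $c_\lambda$ up to the $o(1)$ coming from \eqref{eq:tinjt} and from $|V_0|$ being large rather than infinite. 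The only other delicate point is verifying the degree lower bounds for vertices of $V_0$ and confirming they beat the threshold $\frac{1+\xi}{\chi(F_j)-1}n$ needed to invoke Lemma~\ref{lem:inducedSubgraph}; this is a short computation of the same flavour as the proofs of Lemma~\ref{lem:V1V2disjoint} and Observation~\ref{obs:Vihigh}, and \eqref{eq:xi1} gives the slack required.
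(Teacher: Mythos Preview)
Your proposal is correct and follows essentially the same route as the paper's proof: assume $|V_0|\geq \xi n$, use $d_1(v)+d_2(v)=n-1$ to turn the \emph{upper} bounds defining $V_0$ into the \emph{lower} bounds needed for Lemma~\ref{lem:inducedSubgraph}, apply that lemma with $S=V_0$ for both $i=1,2$, and then contradict the definition of $c_1(F_1,F_2)$ via \eqref{eq:taua}. One small simplification: the degree check is easier than you suggest. From $v\notin V_j$ one gets directly
\[
d_i(v)=n-1-d_j(v)\;\geq\;\left(\frac{1+2\xi}{\chi(F_j)-1}\right)(n-1)\;>\;\left(\frac{1+\xi}{\chi(F_j)-1}\right)n
\]
for large $n$, with no appeal to $\chi(F_1)+\chi(F_2)\geq 7$ or to \eqref{eq:xi1}; those hypotheses are not used at this particular step in the paper either.
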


\begin{proof}
Suppose that $|V_0|\geq \xi n$. Our goal is to obtain a contradiction via an application of Lemma~\ref{lem:inducedSubgraph} with $S=V_0$. By definition of $V_0$, for each $i\in\{1,2\}$, every $v\in V_0$ satisfies 
\[d_i(v)\leq \left(1-\frac{1+2\xi}{\chi(F_i)-1}\right)(n-1).\]
Since $d_1(v)+d_2(v)=n-1$, this tells us that 
\[d_1(v)\geq \left(\frac{1+2\xi}{\chi(F_2)-1}\right)(n-1) > \left(\frac{1+\xi}{\chi(F_2)-1}\right)n\]
and 
\[d_2(v)\geq \left(\frac{1+2\xi}{\chi(F_1)-1}\right)(n-1)> \left(\frac{1+\xi}{\chi(F_1)-1}\right)n\] 
for every vertex $v\in V_0$ and large enough $n$. Therefore, by Lemma~\ref{lem:inducedSubgraph}, for each $i\in \{1,2\}$, we must have
\[t(F_i,G_i[V_0]) \leq \tau(n/|V_0|)^{v(F_i)} \leq \tau/\xi^{v(F_i)}.\]
According to \eqref{eq:taua}, this last expression is less than $c_1(F_1,F_2)/4$. Thus, for $i\in\{1,2\}$,
\[t(F_i,G_i[V_0]) < c_1(F_1,F_2)/4.\]
On the other hand, by definition of $c_1(F_1,F_2)$, we have
\[t(F_1,G_1[V_0]) + t(F_2,G_2[V_0])\geq c_1(F_1,F_2) - o(1)\]
where the $o(1)$ term tends to $0$ as $\xi n\to\infty$. Since $c_1(F_1,F_2)>0$ by~\cite[Lemma~2.11]{MossNoel23++} implies that, for large $n$,
\[t(F_1,G_1[V_0]) + t(F_2,G_2[V_0])\geq c_1(F_1,F_2)/2.\]
Therefore, we can let $i\in \{1,2\}$ such that
\[t(F_i,G_i[V_0])\geq c_1(F_1,F_2)/4.\]
Combining the upper and lower bound on $t(F_i,G_i[V_0])$ that we have proven leads to a contradiction, thereby completing the proof.
\end{proof}

Next, we show that $V_1$ is small which, when combined with the fact that $|V_0|<\xi n$, implies that the vast majority of the vertices are in $V_2$. To do this, we use the  following form of the Erd\H{o}s--Simonovits Supersaturation Theorem.

\begin{thm}[Erd\H{o}s--Simonovits Supersaturation Theorem~\cite{ErdosSimonovits83}]
\label{th:supersat}
For every non-empty graph $F$ and $\xi>0$ there exists $\gamma=\gamma(F,\xi)>0$ such that if $G$ is a graph with $t(K_2,G)\geq 1-\frac{1-\xi}{\chi(F)-1}$, then $t(F,G)\geq \gamma$.
\end{thm}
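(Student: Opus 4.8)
The plan is to deduce this from the Erd\H{o}s--Stone--Simonovits theorem via the standard sampling argument of Erd\H{o}s and Simonovits. Put $r:=\chi(F)-1\ge 1$, $m:=v(F)$ and $n:=v(G)$; we may assume $\xi<1$, since otherwise $t(K_2,G)\ge 1-\tfrac{1-\xi}{r}\ge 1$ is impossible and there is nothing to prove. The hypothesis then says exactly that the edge density of $G$ exceeds the Tur\'an density $\pi(F)=1-\tfrac{1}{r}$ by at least $\tfrac{\xi}{r}$.

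First I would handle large $n$. By the Erd\H{o}s--Stone--Simonovits theorem $\mathrm{ex}(M,F)=\bigl(\pi(F)+o(1)\bigr)\binom{M}{2}$ as $M\to\infty$, so we may fix $M=M(F,\xi)$ with $\mathrm{ex}(M,F)<\bigl(\pi(F)+\tfrac{\xi}{2r}\bigr)\binom{M}{2}$; then every $M$-vertex graph with at least $\bigl(\pi(F)+\tfrac{\xi}{2r}\bigr)\binom{M}{2}$ edges contains a copy of $F$. Assume $n\ge M$ and let $S$ be a uniformly random $M$-element subset of $V(G)$. Each edge of $G$ lies in $G[S]$ with probability $\binom{M}{2}/\binom{n}{2}$, so $\mathbb{E}\bigl[e(G[S])\bigr]=t(K_2,G)\,\tfrac{n}{n-1}\binom{M}{2}\ge\bigl(\pi(F)+\tfrac{\xi}{r}\bigr)\binom{M}{2}$, and since $0\le e(G[S])\le\binom{M}{2}$ a one-line averaging bound gives $\mathbb{P}\bigl[e(G[S])\ge(\pi(F)+\tfrac{\xi}{2r})\binom{M}{2}\bigr]\ge\tfrac{\xi}{2}$. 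Hence at least $\tfrac{\xi}{2}\binom{n}{M}$ of the $M$-subsets induce a copy of $F$. Every copy of $F$ in $G$ spans $m$ vertices and so lies in exactly $\binom{n-m}{M-m}$ of these subsets, whence $G$ contains at least $\tfrac{\xi}{2}\binom{n}{M}/\binom{n-m}{M-m}$ copies of $F$, which is $\Omega(n^m)$ with constant depending only on $F$ and $\xi$. Since each copy of $F$ in $G$ yields at least one homomorphism $F\to G$, this gives $t(F,G)\ge\gamma_1$ for some $\gamma_1=\gamma_1(F,\xi)>0$ once $n$ exceeds a threshold $n_1=n_1(F,\xi)$.

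For the finitely many $n<n_1$, I would argue directly that $t(F,G)>0$ for every admissible $G$: since $\xi>0$ the hypothesis gives $t(K_2,G)>\pi(F)$, hence $e(G)>\tfrac{n^2}{2}\bigl(1-\tfrac{1}{r}\bigr)\ge e(T_{n,r})=\mathrm{ex}(n,K_{r+1})$, so by Tur\'an's theorem $K_{r+1}=K_{\chi(F)}\subseteq G$; mapping the colour classes of a proper $\chi(F)$-colouring of $F$ onto distinct vertices of this clique is a homomorphism $F\to G$, so $t(F,G)\ge n^{-m}$. Taking $\gamma$ to be the minimum of $\gamma_1$ and the finitely many positive values $t(F,G)$ over graphs $G$ with $v(G)<n_1$ that satisfy the hypothesis yields a single $\gamma=\gamma(F,\xi)>0$ valid for all $G$.

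The only substantive external input is the Erd\H{o}s--Stone--Simonovits theorem used to choose $M$; everything else is elementary averaging together with Tur\'an's theorem, so I do not anticipate a real obstacle. The one point requiring care is that $\gamma$ must not depend on $n$, which is exactly why the argument splits into the dense-sampling regime and the bounded collection of small graphs.
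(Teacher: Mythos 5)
The paper does not prove Theorem~\ref{th:supersat}; it simply cites Erd\H{o}s and Simonovits~\cite{ErdosSimonovits83} and uses it as a black box, so there is no internal proof to compare against. Your argument is correct and follows the standard deduction of supersaturation from the Erd\H{o}s--Stone--Simonovits theorem: fix $M$ so that the $M$-vertex extremal number is strictly below the slightly inflated density threshold, sample a random $M$-subset, use the bounded-random-variable averaging bound (your $p\ge\xi/2$ is right: $p(2-\xi)/(2r)\ge\xi/(2r)$ gives $p\ge\xi/(2-\xi)\ge\xi/2$ for $\xi\in(0,1)$), double-count copies of $F$ against $M$-subsets to get $\Omega(n^{v(F)})$ copies, and then handle the bounded range of $n$ by noting that $e(G)>e(T_{n,\chi(F)-1})$ forces $K_{\chi(F)}\subseteq G$ and hence at least one homomorphism. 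The only small stylistic point is that ``lies in exactly $\binom{n-m}{M-m}$ of these subsets'' should read as a double-counting inequality (a single $M$-set may contain several copies of $F$), but the resulting lower bound on the number of copies is exactly the one you state, so the step is sound. This is a clean and complete proof of the statement as the paper uses it.
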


Using Theorem~\ref{th:supersat}, we define $\gamma$ by
\begin{equation}
\label{eq:gamma}
\gamma:=\min\{\gamma(F_1,\xi),\gamma(F_2,\xi)\}.
\end{equation}
We also assume that $\tau$ is chosen so that
\begin{equation}
\label{eq:taub}
0<\tau<\gamma\cdot (25\xi)^{f}.
\end{equation}
The next lemma is analogous to~\cite[Claim~3.4]{FoxWigderson23}.

\begin{lem}
\label{lem:V1small}
$|V_1|< 25\xi n$. 
\end{lem}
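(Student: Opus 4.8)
The plan is to argue by contradiction: suppose $|V_1| \geq 25\xi n$ and derive a violation of \eqref{eq:G1good} by exhibiting many copies of $H_1$ in $G_1$. Recall from Observation~\ref{obs:Vihigh} (applied with $i = 1$, $j = 2$) that every $v \in V_1$ satisfies $d_1(v) > \left(\tfrac{5}{4} \cdot \tfrac{1+\xi}{\chi(F_2)-1}\right)n$. In particular every vertex of $V_1$ has $G_1$-degree comfortably above $\left(\tfrac{1+\xi}{\chi(F_2)-1}\right)n$, which is exactly the hypothesis needed to invoke Lemma~\ref{lem:inducedSubgraph} with $i = 1$, $j = 2$ and $S = V_1$. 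That lemma would give $t(F_1, G_1[V_1]) \leq \tau \cdot (n/|V_1|)^{v(F_1)} \leq \tau/(25\xi)^{v(F_1)} \leq \tau/(25\xi)^f$, and by \eqref{eq:taub} this is strictly less than $\gamma$.

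The remaining task is to contradict this by showing $t(F_1, G_1[V_1]) \geq \gamma$, which is where the Supersaturation Theorem (Theorem~\ref{th:supersat}) enters. I would estimate the edge density inside $G_1[V_1]$. Each $v \in V_1$ has at most $|V \setminus V_1| \leq n - |V_1|$ neighbours outside $V_1$ in $G_1$, so its degree within $V_1$ is at least $d_1(v) - (n - |V_1|)$. Using Observation~\ref{obs:Vihigh}, the in-degree is at least roughly $|V_1| - \tfrac{1+\xi}{\chi(F_2)-1}\,n + \left(\tfrac14 \cdot \tfrac{1+\xi}{\chi(F_2)-1}\right)n$; dividing by $|V_1|$ and using $|V_1| \geq 25\xi n$ together with the smallness of $\xi$ from \eqref{eq:xi1}, one checks that $t(K_2, G_1[V_1]) \geq 1 - \tfrac{1-\xi}{\chi(F_1)-1}$ (the slack term of order $\tfrac{\xi}{\chi(F_2)-1}\,n / |V_1| \leq \tfrac{1}{25(\chi(F_2)-1)}$ should dominate the required $\tfrac{\xi}{\chi(F_1)-1}$ correction once $\xi$ is small enough; this is the computation I would be most careful with). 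Note $G_1[V_1]$ is being treated as a graph on its $|V_1|$ vertices here, so $t(K_2, \cdot)$ refers to the density among those vertices; since $\xi n \to \infty$, any $o(1)$ corrections from the induced-subgraph convention are negligible. Then Theorem~\ref{th:supersat}, with the choice of $\gamma$ in \eqref{eq:gamma}, yields $t(F_1, G_1[V_1]) \geq \gamma(F_1, \xi) \geq \gamma$, contradicting the upper bound from the previous paragraph.

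The main obstacle — really the only delicate point — is verifying that the minimum degree inside $V_1$ is large enough, relative to $|V_1|$, to meet the hypothesis $t(K_2, G_1[V_1]) \geq 1 - \tfrac{1-\xi}{\chi(F_1)-1}$ of Theorem~\ref{th:supersat}. This hinges on the interplay between the lower bound $|V_1| \geq 25\xi n$ (which makes the ``lost'' out-neighbours few, proportionally) and the surplus of $\tfrac14 \cdot \tfrac{1+\xi}{\chi(F_2)-1}\,n$ coming from Observation~\ref{obs:Vihigh} over the raw threshold defining $V_1$. Concretely, within $V_1$ every vertex misses at most $n - |V_1| - \left(\tfrac14 \cdot \tfrac{1+\xi}{\chi(F_2)-1}\right)n$ vertices beyond what it would miss in a clique on $V_1$, wait — more carefully, the number of non-neighbours of $v$ inside $V_1$ is at most $|V_1| - \big(d_1(v) - (n - |V_1|)\big) = n - d_1(v) < n - \left(\tfrac54 \cdot \tfrac{1+\xi}{\chi(F_2)-1}\right)n$; dividing by $|V_1| \geq 25\xi n$ bounds the non-edge density of $G_1[V_1]$ by $\tfrac{1}{25\xi}\left(1 - \tfrac54 \cdot \tfrac{1+\xi}{\chi(F_2)-1}\right)$, and one must check this is at most $\tfrac{1-\xi}{\chi(F_1)-1}$. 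Since the prefactor $\tfrac{1}{25\xi}$ is large while the bracketed term is bounded, this requires instead bounding the non-neighbour count by the sharper $n - |V_1| - (\text{in-degree surplus})$ argument and exploiting $\chi(F_1)+\chi(F_2) \geq 7$; I would lay out this inequality chain explicitly and, if needed, impose one further upper-bound assumption on $\xi$ of the same flavour as \eqref{eq:xi1} to close the gap. Everything else is a direct citation of the lemmas already established.
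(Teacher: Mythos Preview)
Your approach has a genuine gap at the ``delicate point'' you yourself flag. You want to show that $G_1[V_1]$ has edge density at least $1-\tfrac{1-\xi}{\chi(F_1)-1}$ in order to invoke supersaturation. But this cannot be deduced from $|V_1|\ge 25\xi n$ and the degree information alone. By the definition of $V_1$, a vertex $v\in V_1$ may have up to $\tfrac{1+2\xi}{\chi(F_1)-1}(n-1)$ non-neighbours in $G_1$ globally, and \emph{all} of these non-neighbours could lie inside $V_1$. Thus the fraction of non-neighbours of $v$ within $V_1$ can be as large as $\tfrac{(1+2\xi)(n-1)}{(\chi(F_1)-1)|V_1|}$, and when $|V_1|$ is near the lower end $25\xi n$ this ratio is of order $1/\xi$, vastly exceeding $1$. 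No additional smallness assumption on $\xi$ helps: the bound only gets worse as $\xi\to 0$. The ``surplus'' from Observation~\ref{obs:Vihigh} does not rescue this either, since that observation gives $d_1(v)>\tfrac{5}{4}\cdot\tfrac{1+\xi}{\chi(F_2)-1}\,n$, which for $\chi(F_2)\ge 3$ is at most $\tfrac{5}{8}n$; the corresponding in-degree bound $d_1(v)-(n-|V_1|)$ is then negative whenever $|V_1|<\tfrac{3}{8}n$.

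The paper's proof proceeds quite differently. It first proves (Claim~\ref{claim:V12sparse}) that if $|V_i|\ge 25\xi n$ then $G_i[V_i]$ must be \emph{sparse}, i.e.\ have edge density \emph{below} $1-\tfrac{1-\xi}{\chi(F_i)-1}$ --- precisely the negation of what you try to establish --- because otherwise supersaturation plus Lemma~\ref{lem:inducedSubgraph} gives a contradiction. It then applies this to both $i=1$ and $i=2$ (using Assumption~\ref{ass:V1<=V2} to get $|V_2|\ge 25\xi n$ as well), and combines the two sparseness bounds with the degree-sum identities $\sum_{v\in V_i}d_i(v)=2e_i(V_i)+e_i(V_i,V_j)+e_i(V_i,V_0)$ to control the cross-densities $\eta_i=e_i(V_1,V_2)/|V_1||V_2|$. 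This yields inequalities \eqref{eq:5xi} and \eqref{eq:5xi2}, and a short case analysis on whether some $\eta_i\le\tfrac{2}{5}$ (using $\chi(F_1)+\chi(F_2)\ge 7$) finishes. The essential idea you are missing is that one must play $V_1$ and $V_2$ off against each other; the structure of $G_1[V_1]$ alone is not constrained enough.
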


\begin{proof}
Let us begin by establishing the following claim.

\begin{claim}
\label{claim:V12sparse}
For each $i\in \{1,2\}$, if $|V_i|\geq 25\xi n$, then
\[e_i(V_i) < \left(1-\frac{1-\xi}{\chi(F_i)-1}\right)\frac{|V_i|^2}{2}.\]
\end{claim}

\begin{proof}[Proof of Claim~\ref{claim:V12sparse}]
Suppose not. Then there exists $i \in \{1,2\}$ such that
\[ t(K_2,G_i[V_i]) = \frac{2e_i(V_i)}{|V_i|^2} \geq 1-\frac{1-\xi}{\chi(F_i)-1}.\]
Consequently, Theorem~\ref{th:supersat} implies that $t(F_i,G_i[V_i])\geq\gamma$. Using the hypothesis $|V_i|\geq 25\xi n$,
\[\gamma \geq \gamma\cdot \left(\frac{25\xi n}{|V_i|}\right)^{v(F_i)}\geq \gamma\cdot (25\xi)^{v(F_i)}\left(\frac{n}{|V_i|}\right)^{v(F_i)}\]
which, by \eqref{eq:taub}, is greater than $\tau\cdot(n/|V_i|)^{v(F_i)}$. By Observation~\ref{obs:Vihigh}, we have $d_i(v)\geq \left(\frac{1+\xi}{\chi(F_j)-1}\right)n$ for all $v\in V_i$, where $j\in\{1,2\}\setminus\{i\}$. So, the set $S=V_i$ contradicts Lemma~\ref{lem:inducedSubgraph}. Therefore, the claim holds.
\end{proof}

We now use Claim~\ref{claim:V12sparse} to complete the proof of the lemma. If $|V_1|<25\xi n$, then we are done; so, we assume $|V_1|\geq 25\xi n$. By Assumption~\ref{ass:V1<=V2}, $|V_2|\geq 25\xi n$ as well. In particular, both $V_1$ and $V_2$ are non-empty and satisfy the hypothesis, and therefore the conclusion, of Claim~\ref{claim:V12sparse}. For $i\in \{1,2\}$, define
\[\eta_i:=\frac{e_i(V_1,V_2)}{|V_1||V_2|}\]
and note that $\eta_1+\eta_2=1$ because $V_1$ and $V_2$ are disjoint (by Lemma~\ref{lem:V1V2disjoint}) and $G_2$ is the complement of $G_1$. By definition of $V_1$, we have
\[ \sum_{v\in V_1}d_1(v) \geq |V_1|\left(1-\frac{1+2\xi}{\chi(F_1)-1}\right)(n-1) \geq |V_1|\left(1-\frac{1+3\xi}{\chi(F_1)-1}\right)n.\]
On the other hand,
\[\sum_{v\in V_1}d_1(v) = 2e_1(V_1) + e_1(V_1,V\setminus V_1)\]
\[ = 2e_1(V_1) + e_1(V_1,V_2) + e_1(V_1,V_0)\leq 2e_1(V_1)+\eta_1|V_1||V_2| + |V_1||V_0|.\]
Claim~\ref{claim:V12sparse} tells us that the above expression is less than
\[\left(1-\frac{1-\xi}{\chi(F_1)-1}\right)|V_1|^2 +\eta_1 |V_1||V_2| + |V_1||V_0|.\]
Combining the lower and upper bounds on $\sum_{v\in V_1}d_1(v)$ obtained above and cancelling a factor of $|V_1|$, we get
\[\left(1-\frac{1+3\xi}{\chi(F_1)-1}\right)n \leq \left(1-\frac{1-\xi}{\chi(F_1)-1}\right)|V_1| +\eta_1 |V_2| + |V_0|.\]
By Lemma~\ref{lem:V1V2disjoint}, we have $|V_0|+|V_1|+|V_2|=n$ and so this inequality becomes
\[\left(1-\frac{1}{\chi(F_1)-1}-\frac{3\xi}{\chi(F_1)-1}\right)n \leq n-\left(\frac{1-\xi}{\chi(F_1)-1}\right)|V_1| +(\eta_1-1) |V_2|\]
\[=n-\frac{|V_1|}{\chi(F_1)-1}+\frac{\xi|V_1|}{\chi(F_1)-1} +(\eta_1-1) |V_2|.\]
Adding and subtracting $\frac{|V_2|+|V_0|}{\chi(F_1)-1}$ in this final expression and using $|V_0|+|V_1|+|V_2|=n$ again yields
\[n-\frac{|V_1|}{\chi(F_1)-1}+\frac{\xi|V_1|}{\chi(F_1)-1} +(\eta_1-1) |V_2| + \frac{|V_2|+|V_0|}{\chi(F_1)-1} - \frac{|V_2|+|V_0|}{\chi(F_1)-1}\]
\[=n-\frac{n}{\chi(F_1)-1}+\frac{\xi|V_1|}{\chi(F_1)-1} +(\eta_1-1) |V_2| + \frac{|V_2|+|V_0|}{\chi(F_1)-1}.\]
Since $|V_1|\leq n$ trivially and $|V_0|< \xi n$ by Lemma~\ref{lem:V0small}, we get that this last expression is bounded above by
\[n-\frac{n}{\chi(F_1)-1} +\left(\eta_1-1 + \frac{1}{\chi(F_1)-1}\right) |V_2| + \frac{2\xi n}{\chi(F_1)-1}.\]
To recap, the inequality that we have just derived is
\[\left(1-\frac{1}{\chi(F_1)-1}-\frac{3\xi}{\chi(F_1)-1}\right)n\leq n-\frac{n}{\chi(F_1)-1} +\left(\eta_1-1 + \frac{1}{\chi(F_1)-1}\right) |V_2| + \frac{2\xi n}{\chi(F_1)-1}.\]
By rearranging, we get
\begin{equation}
\label{eq:5xi}
\left(1-\eta_1-\frac{1}{\chi(F_1)-1}\right)|V_2| < \frac{5\xi n}{\chi(F_1)-1}.
\end{equation}
Applying the same argument, but with the roles of $(F_1,V_1,\eta_1)$ and $(F_2,V_2,\eta_2)$ reversed, we get that
\begin{equation}
\label{eq:5xi2}
\left(1-\eta_2-\frac{1}{\chi(F_2)-1}\right)|V_1| < \frac{5\xi n}{\chi(F_2)-1}. 
\end{equation}
We now divide the proof into cases depending on the values of $\eta_1$ and $\eta_2$. 

\begin{case}
\label{case:small}
$\eta_i\leq \frac{2}{5}$ for some $i\in \{1,2\}$. 
\end{case}

Let $j\in\{1,2\}\setminus\{i\}$. Since $\chi(F_i)\geq3$, we get the following by applying \eqref{eq:5xi} or \eqref{eq:5xi2}:
\[\frac{5\xi n}{2}\geq \frac{5\xi n}{\chi(F_i)-1}>\left(1-\eta_i-\frac{1}{\chi(F_i)-1}\right)|V_j|\geq \left(1-\frac{2}{5}-\frac{1}{2}\right)|V_j| = \frac{|V_j|}{10}\]
and so $|V_j|< 25\xi n$. Since $|V_1|\leq |V_2|$ by Assumption~\ref{ass:V1<=V2}, this implies that $|V_1|< 25\xi n$. 

\begin{case}
$\eta_1,\eta_2 > \frac{2}{5}$.
\end{case}

Since $\chi(F_1)+\chi(F_2)\geq 7$, we can let $j\in \{1,2\}$ so that $\chi(F_j)\geq4$ and let $i\in\{1,2\}\setminus\{j\}$. Since $\eta_i>\frac{2}{5}$ and $\eta_i+\eta_j=1$, we have $\eta_j<\frac{3}{5}$. Now, by applying \eqref{eq:5xi} or \eqref{eq:5xi2},
\[\frac{5\xi n}{3}\geq \frac{5\xi n}{\chi(F_j)-1} > \left(1-\eta_j-\frac{1}{\chi(F_j)-1}\right)|V_i| > \left(1-\frac{3}{5}-\frac{1}{3}\right)|V_i| = \frac{|V_i|}{15}\]
and so $|V_i|< 25\xi n$. Since $|V_1|\leq |V_2|$ by Assumption~\ref{ass:V1<=V2}, this completes the proof.
\end{proof}

\subsection{Obtaining the Partition}

Now that we know that most vertices are in $V_2$ and, thus, have high degree in $G_2$, the next step is to show that $G_2$ can be made $F_2$-free by deleting a small proportion of its edges. For this, we apply the well-known Graph Removal Lemma of Erd\H{o}s, Frankl and R\"odl~\cite{ErdosFranklRodl86}. As discussed in Remark~\ref{rem:FW}, this is one place in which our argument deviates from that of~\cite{FoxWigderson23}. 

\begin{thm}[Graph Removal Lemma~\cite{ErdosFranklRodl86}]
\label{th:removal}
For every graph $F$ and any given $\delta>0$, there is a $\beta=\beta(F,\delta)>0$ such that if $G$ is a graph with $t(F,G)\leq \beta$, then there is a spanning subgraph $G'$ of $G$ with $t(F,G')=0$ and $t(K_2,G')\geq t(K_2,G)-\delta$.
\end{thm}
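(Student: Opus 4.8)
\textbf{Plan for proving the Graph Removal Lemma (Theorem~\ref{th:removal}).}

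The strategy is the classical Szemer\'edi-regularity-based proof, adapted to deliver precisely the form stated here: starting from a graph $G$ with $t(F,G)\le\beta$ (for $\beta$ small in terms of $F$ and $\delta$), produce a spanning subgraph $G'$ with \emph{no} homomorphic image of $F$ and at most $\delta v(G)^2$ fewer (ordered) adjacent pairs. First I would apply Szemer\'edi's Regularity Lemma to $G$ with a regularity parameter $\epsilon=\epsilon(F,\delta)$ and a minimum number of parts $m_0=m_0(\delta)$ chosen so that $1/m_0$ is tiny compared to $\delta$; this yields a partition $V(G)=V_0\cup V_1\cup\dots\cup V_M$ into a negligible exceptional set and $M$ parts of equal size, with all but at most $\epsilon M^2$ of the pairs $(V_i,V_j)$ being $\epsilon$-regular. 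The cleaning step is to form $G'$ from $G$ by deleting every edge that is (i) incident to $V_0$, or (ii) inside some part $V_i$, or (iii) in an irregular pair $(V_i,V_j)$, or (iv) in a regular pair of density less than $d$, where $d=d(F,\delta)$ is a small constant. A routine count shows the number of ordered pairs removed is at most $(2\epsilon + 1/M + \epsilon + d)\,v(G)^2 < \delta\, v(G)^2$ once $\epsilon, 1/M, d$ are chosen small enough, which gives the density bound $t(K_2,G')\ge t(K_2,G)-\delta$.

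The crux is then showing $t(F,G')=0$, i.e. $G'$ contains no copy of $F$ (hence no homomorphic image of $F$, and actually the statement only needs the homomorphism density to vanish, which is equivalent for the cleaned graph). Suppose for contradiction that $G'$ contains a copy of $F$ on vertices $w_1,\dots,w_{v(F)}$; since we deleted all edges inside parts and incident to $V_0$, these vertices lie in distinct parts $V_{i_1},\dots,V_{i_{v(F)}}$, and every pair of parts corresponding to an edge of $F$ is $\epsilon$-regular with density at least $d$. Now apply the Key Lemma / Counting Lemma (the embedding lemma for the reduced graph): if $\epsilon$ is chosen small enough relative to $d$ and $v(F)$ — concretely $\epsilon\le \epsilon_0(d,v(F))$ for the standard embedding threshold — then the number of copies of $F$ with the $k$-th vertex in $V_{i_k}$ is at least $\tfrac12\, d^{e(F)}\,(|V(G)|/M)^{v(F)}$. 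This forces $t(F,G)\ge t(F,G')\cdot(\text{trivial bound})$... more directly, $t(F,G)\ge \tfrac14 d^{e(F)} M^{-v(F)}$, a positive constant depending only on $F$ and $\delta$. Choosing $\beta := \tfrac14 d^{e(F)} M_{\max}^{-v(F)}$ smaller than this quantity (using that $M$ is bounded above by the tower-type function $M_{\max}=M_{\max}(\epsilon,m_0)$ from the Regularity Lemma) contradicts the hypothesis $t(F,G)\le\beta$. Hence no copy of $F$ survives in $G'$.

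The main obstacle — and the only genuinely delicate point — is the order of quantification among $\epsilon$, $d$, $m_0$, and the resulting upper bound $M_{\max}$ on the number of parts, because $\beta$ must be chosen \emph{after} $M_{\max}$ is determined, yet $M_{\max}$ depends on $\epsilon$, which itself must be small relative to $d$ and $v(F)$. The correct pipeline is: fix $\delta$; choose $d$ small so that $d<\delta/4$; choose $\epsilon$ small enough for the embedding lemma to apply at density $d$ with $v(F)$ vertices \emph{and} small enough that $2\epsilon+\epsilon<\delta/2$; choose $m_0$ with $1/m_0<\delta/4$; invoke the Regularity Lemma to get $M\le M_{\max}(\epsilon,m_0)$; finally set $\beta:=\tfrac14 d^{e(F)} M_{\max}^{-v(F)}$. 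I would also note the standard edge cases ($v(G)<m_0$, where $G$ itself can be taken as $G'$ after deleting its at most $\binom{v(G)}{2}<\delta v(G)^2$-bounded... actually one simply handles small $n$ by the trivial bound since $t(F,G)\le\beta$ already forces $F\not\subseteq G$ when $v(G)$ is bounded and $\beta$ is small). With these choices all the counts go through and the theorem follows; everything else is bookkeeping.
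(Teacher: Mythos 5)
The paper does not contain a proof of Theorem~\ref{th:removal}; it is quoted as a black box from Erd\H{o}s, Frankl and R\"odl~\cite{ErdosFranklRodl86}, so there is no in-paper argument to compare yours against. On its own merits, your sketch follows the standard regularity-lemma route (regular partition, cleaning, counting via the embedding lemma), and your discussion of the order of quantifiers --- fix $\delta$, then $d$, then $\epsilon$, then $m_0$, obtain $M_{\max}$, and only then set $\beta$ --- is exactly right; that is indeed the one point where naive write-ups usually go wrong, and you handle the small-$n$ edge case correctly at the end.

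There is, however, a genuine logical gap in the middle. You write ``showing $t(F,G')=0$, i.e.\ $G'$ contains no copy of $F$ (hence no homomorphic image of $F$\dots)''. That implication runs backwards: $F$-subgraph-freeness does \emph{not} imply absence of homomorphisms from $F$. A triangle has no subgraph copy of $C_5$, yet $t(C_5,K_3)>0$ since $C_5$ maps homomorphically onto $K_3$; more generally, for any non-complete $F$ one can contract a non-edge to produce a smaller homomorphic image. Since the theorem's conclusion is precisely $t(F,G')=0$, your contradiction must begin from ``suppose there is a homomorphism $\varphi\colon F\to G'$,'' not from a subgraph copy on $v(F)$ distinct vertices. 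A second, related slip: the assertion that those vertices ``lie in distinct parts'' is false for non-complete $F$; deleting within-part edges only forces the \emph{endpoints of each edge of $F$} into different parts, while non-adjacent $F$-vertices may land in the same part. Both problems disappear if you argue directly from a homomorphism: the induced part-assignment $c\colon V(F)\to[M]$ is a proper colouring of $F$ which is a homomorphism into the reduced graph on the dense regular pairs, and the counting/blow-up lemma in its homomorphism form (which tolerates several $F$-vertices in a single part) yields $\Omega\bigl((n/M)^{v(F)}\bigr)$ homomorphisms $F\to G'\subseteq G$, contradicting $t(F,G)\le\beta$ once $\beta<\tfrac14 d^{e(F)} M_{\max}^{-v(F)}$. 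Your parenthetical that the subgraph and homomorphism conditions ``are equivalent for the cleaned graph'' is in fact true for large $n$, but its proof \emph{is} this counting argument, so it cannot be invoked in advance to reduce one condition to the other.
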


Using Theorem~\ref{th:removal}, define
\begin{equation}\label{eq:beta}\beta:=\min\left\{\beta(F_1,\delta/2),\beta(F_2,\delta/2)\right\}.\end{equation}
We also assume that $\xi$ and $\tau$ satisfy the following:
\begin{equation}
\label{eq:xi2}
0<\xi<\frac{\beta}{52\cdot f},
\end{equation}
\begin{equation}
\label{eq:tauc}
0<\tau<\beta/2.
\end{equation}
The following lemma allows us to apply Theorem~\ref{th:removal}.

\begin{lem}
\label{lem:F2small}
$t(F_2,G_2)< \beta$.
\end{lem}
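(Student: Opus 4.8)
The plan is to derive a contradiction from the assumption $t(F_2,G_2)\geq \beta$ by using the bound $m(H_1,H_2;G_1)\leq 1-o(1)$ from \eqref{eq:G1good}, exactly as in the proof of Lemma~\ref{lem:inducedSubgraph} but now producing too many copies of $H_2$ in $G_2$. First I would observe that by Lemma~\ref{lem:V0small} and Lemma~\ref{lem:V1small}, together with $|V_0|+|V_1|+|V_2|=n$ (Lemma~\ref{lem:V1V2disjoint}), we have $|V_2|\geq (1-26\xi)n$, so almost all vertices lie in $V_2$. Hence $G_2[V_2]$ has density close to that of $G_2$, and since $t(F_2,G_2)\geq \beta$ we would like to say $t(F_2,G_2[V_2])$ is bounded below by something like $\beta/2>\tau$. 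This is where \eqref{eq:xi2} enters: deleting the at most $26\xi n$ vertices outside $V_2$ changes $t(F_2,\cdot)$ by at most roughly $v(F_2)\cdot 26\xi \leq 52\xi f < \beta$ in the appropriate normalization, so $t(F_2,G_2[V_2]) \geq t(F_2,G_2)\cdot(|V_2|/n)^{v(F_2)} \geq \beta(1-26\xi)^{f}$, which by \eqref{eq:xi2} and \eqref{eq:tauc} exceeds $\tau\cdot(n/|V_2|)^{v(F_2)}$ (indeed $|V_2|\leq n$ makes the right side at most $\tau<\beta/2$).

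Next I would apply Lemma~\ref{lem:inducedSubgraph} with $i=2$, $j=1$ and $S=V_2$. The hypothesis of that lemma requires $d_2(v)\geq \left(\frac{1+\xi}{\chi(F_1)-1}\right)n$ for all $v\in V_2$; this follows from Observation~\ref{obs:Vihigh} applied with $i=2$, $j=1$, which gives the even stronger bound $d_2(v)>\left(\frac{5}{4}\cdot\frac{1+\xi}{\chi(F_1)-1}\right)n$. So Lemma~\ref{lem:inducedSubgraph} yields $t(F_2,G_2[V_2])\leq \tau\cdot(n/|V_2|)^{v(F_2)}$, contradicting the lower bound just established. This completes the proof.

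The main obstacle — really the only non-routine point — is pinning down the constant in \eqref{eq:xi2}: one must check that the exponent $v(F_2)\leq f$ and the factor $|V_0\cup V_1|\leq 26\xi n$ combine so that the loss $(|V_2|/n)^{v(F_2)} = (1-|V_0\cup V_1|/n)^{v(F_2)}$ stays above, say, $1/2$, and hence that $t(F_2,G_2[V_2])\geq \beta/2 > \tau$. The bound $(1-26\xi)^{f}\geq 1-26\xi f \geq 1-\beta/2 > 1/2$ via Bernoulli's inequality and \eqref{eq:xi2} does the job, and the constant $52$ in \eqref{eq:xi2} is chosen precisely so that $26\xi f < \beta/2$. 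Everything else is a direct invocation of results already proven in this section; no new idea is needed, only careful bookkeeping of which of the many small parameters controls which error term.
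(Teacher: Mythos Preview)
Your overall approach is the same as the paper's, and the last step --- invoking Observation~\ref{obs:Vihigh} and then Lemma~\ref{lem:inducedSubgraph} with $S=V_2$ --- is correct and identical to the paper. However, the key inequality you write,
\[t(F_2,G_2[V_2]) \geq t(F_2,G_2)\cdot(|V_2|/n)^{v(F_2)},\]
is false as stated. Restricting $G_2$ to the subset $V_2$ does not simply scale the homomorphism density by $(|V_2|/n)^{v(F_2)}$: if every copy of $F_2$ in $G_2$ happened to touch $V\setminus V_2$, one would have $t(F_2,G_2)>0$ but $t(F_2,G_2[V_2])=0$. (Your parenthetical ``$|V_2|\leq n$ makes the right side at most $\tau$'' is also backwards, since $n/|V_2|\geq 1$.) The Bernoulli bound in your final paragraph cannot repair this, because it is bounding a multiplicative factor that is not the actual error term.

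What does work is exactly the subtractive estimate you allude to just before writing the wrong inequality: the number of homomorphisms from $F_2$ to $G_2$ that hit $V_0\cup V_1$ is at most $v(F_2)\cdot|V_0\cup V_1|\cdot n^{v(F_2)-1}\leq 26v(F_2)\xi n^{v(F_2)}$, whence
\[t(F_2,G_2[V_2])\;\geq\;(n/|V_2|)^{v(F_2)}\bigl(t(F_2,G_2)-26v(F_2)\xi\bigr)\;\geq\;(n/|V_2|)^{v(F_2)}(\beta-26f\xi)\;>\;\tau\,(n/|V_2|)^{v(F_2)}\]
by \eqref{eq:xi2} and \eqref{eq:tauc}. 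This is precisely the paper's computation; with this correction your argument is complete and coincides with the paper's proof.
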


\begin{proof}
Suppose, to the contrary, that $t(F_2,G_2)\geq\beta$. The number of homomorphisms from $F_2$ to $G_2$ which map a vertex to $V_0\cup V_1$ is at most $v(F_2)\cdot n^{v(F_2)-1}\cdot |V_0\cup V_1|$ which, by Lemmas~\ref{lem:V0small} and~\ref{lem:V1small}, is no more than $26v(F_2)\xi n^{v(F_2)}$. Therefore,
\[t(F_2,G_2[V_2]) \geq  \frac{t(F_2,G_2)n^{v(F_2)} - 26v(F_2)\xi n^{v(F_2)}}{|V_2|^{v(F_2)}}\geq (n/|V_2|)^{v(F_2)}(\beta-26v(F_2)\xi).\]
By \eqref{eq:xi2} and \eqref{eq:tauc}, this is greater than $\tau\cdot (n/|V_2|)^{v(F_2)}$. Recall that, by Observation~\ref{obs:Vihigh}, we have $d_2(v)\geq \left(\frac{1+\xi}{\chi(F_1)-1}\right)n$ for all $v\in V_2$. So, the set $S=V_2$ contradicts Lemma~\ref{lem:inducedSubgraph}, and the proof is complete. 
\end{proof}

The last step in verifying Lemma~\ref{lem:roughStructure} involves utilizing the Erd\H{o}s--Simonovits Stability Theorem~\cite{Simonovits68} in the following form. 

\begin{thm}[Erd\H{o}s--Simonovits Stability Theorem~\cite{Simonovits68}]
\label{th:stability}
For every non-empty graph $F$ and $\varepsilon>0$, there exists $\delta=\delta(F,\varepsilon)>0$ such that if $G$ is a graph with $t(F,G)=0$ and $t(K_2,G)\geq 1-\frac{1}{\chi(F)-1}-\delta$, then there exists a partition $A_1,\dots,A_{\chi(F)-1}$ of $V(G)$ such that $\sum_{i=1}^{\chi(F)-1}e(A_i)\leq \varepsilon n^2$.
\end{thm}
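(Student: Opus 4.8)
Write $r:=\chi(F)-1$ and $s:=v(F)$; we may assume throughout that $n:=v(G)$ is large in terms of $F$ and $\varepsilon$, as is the case in every application of the theorem. Since $\chi(F)=r+1$, placing the colour classes of a proper $(r+1)$-colouring of $F$ into distinct parts of $K_{r+1}(s,\dots,s)$ exhibits $F$ as a subgraph of $K_{r+1}(s,\dots,s)$; hence every $F$-free graph is $K$-free, where $K:=K_{r+1}(s,\dots,s)$, and since $\chi(K)=r+1$ too it suffices to prove the theorem with $F$ replaced by $K$. If $r=1$ then $K=K_{s,s}$, the hypothesis on $t(K_2,G)$ is vacuous, and the K\H{o}v\'ari--S\'os--Tur\'an theorem gives $e(G)=O(n^{2-1/s})\le\varepsilon n^2$, so the one-part partition works; assume $r\ge 2$ from now on.

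Since $t(K_2,G)\ge 1-1/r-\delta$ gives $e(G)\ge\tfrac12(1-1/r-\delta)n^2$, I perform the usual degree-cleaning: repeatedly delete a vertex whose degree in the current graph is below $(1-1/r-\sqrt\delta)\,n$. Comparing the edges destroyed with the Tur\'an/Erd\H{o}s--Stone estimate $\mathrm{ex}(n',K)\le(1-1/r+o(1))\binom{n'}{2}$ for the number of edges of the $K$-free remainder shows that only $O_r(\sqrt\delta)\cdot n$ vertices get deleted. Let $G'=G[V(G')]$ be the resulting induced subgraph, on $n'\ge(1-O_r(\sqrt\delta))\,n$ vertices; it is still $K$-free and has minimum degree at least $(1-1/r-2\sqrt\delta)\,n'$.

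Two facts finish the proof. First, by a standard supersaturation argument (for large $N$, any $N$-vertex graph with at least $\eta N^{r+1}$ copies of $K_{r+1}$ contains $K_{r+1}(s,\dots,s)$), the $K$-free graph $G'$ has $o(n'^{\,r+1})$ copies of $K_{r+1}$. Second, I will use the following \emph{Claim}: for every $\varepsilon'>0$ there are $\eta,c>0$ such that every $N$-vertex graph with minimum degree at least $(1-1/r-\eta)N$ and fewer than $cN^{r+1}$ copies of $K_{r+1}$ has an $r$-partition with at most $\varepsilon'N^2$ edges inside parts. Granting the Claim, apply it to $G'$ with $\eta=2\sqrt\delta$, having chosen $\delta$ small enough that $2\sqrt\delta\le\eta(\varepsilon',r)$ and $n$ large enough that the $o(n'^{\,r+1})$ above is below $cn'^{\,r+1}$; this yields an $r$-partition $A_1,\dots,A_r$ of $V(G')$ with $\sum_i e_{G'}(A_i)\le\varepsilon' n'^2$. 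Extending it to a partition of $V(G)$ by distributing the $O_r(\sqrt\delta)\,n$ deleted vertices arbitrarily adds at most $O_r(\sqrt\delta)\,n^2$ internal edges, and since $G'$ is an induced subgraph of $G$ we conclude $\sum_i e(A_i)\le\varepsilon' n^2+O_r(\sqrt\delta)\,n^2\le\varepsilon n^2$, provided $\varepsilon'$ and $\delta$ were chosen small enough in terms of $\varepsilon$ and $r$. This chain of dependencies fixes $\delta=\delta(F,\varepsilon)$.

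The Claim is the main obstacle, and is where the classical Erd\H{o}s--Simonovits argument does its work. Its proof has the following shape: take an $r$-partition $A_1,\dots,A_r$ of the $N$-vertex graph minimising $\Phi:=\sum_i e(A_i)$, so that optimality forces $|N(v)\cap A_i|=\min_j|N(v)\cap A_j|\le d(v)/r$ for every $v\in A_i$. If $\Phi\ge\varepsilon'N^2$, then first the parts must be nearly balanced --- an abnormally large part $A_i$ would, by the displayed inequality, make every one of its vertices have only $O(\varepsilon')N$ neighbours inside $A_i$, forcing $\Phi$ to be small --- and then, using near-balance together with the minimum-degree bound, one bootstraps the $\Omega(\varepsilon'N^2)$ internal edges into copies of $K_{r+1}$: for $r\ge 3$, each internal edge $uv\subseteq A_i$ has at least $(1-2/r-O(\eta))N$ common neighbours, a positive fraction of the vertex set, and these are in turn densely joined across the remaining $r-1$ parts, so an iterated-neighbourhood count produces $\Omega_{\varepsilon',r}(N^{r+1})$ copies of $K_{r+1}$, contradicting the hypothesis once $c$ is small. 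The case $r=2$, where the common-neighbour estimate degenerates and the minimum-degree slack is tightest, is handled by the analogous but more careful triangle count --- this is exactly the stability form of the Andr\'asfai--Erd\H{o}s--S\'os phenomenon. Making the balance dichotomy and the neighbourhood count uniform in $r$ is the delicate part; alternatively, one may derive the Claim from the Szemer\'edi Regularity Lemma (as in F\"uredi's treatment of the stability theorem), at the cost of a heavier tool.
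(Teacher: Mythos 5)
The paper cites this as the classical Erd\H{o}s--Simonovits Stability Theorem (\cite{Simonovits68}) and supplies no proof of its own, so there is no internal argument to compare against; what follows is an evaluation of your proposal on its own terms.

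Your overall architecture is a recognizable and sound modern route to the theorem: reduce from $t(F,G)=0$ to $G$ being $K_{r+1}(s,\dots,s)$-free (via a proper $(r+1)$-colouring of $F$), degree-clean to a large induced subgraph $G'$ of near-Tur\'an minimum degree, use supersaturation to convert $K$-freeness into "$o(n'^{\,r+1})$ copies of $K_{r+1}$," and then invoke a min-degree stability claim for graphs with few $(r+1)$-cliques. The reduction step is correct (if $G$ contains $K$ it contains $F$, so $F$-free implies $K$-free, and $t(F,G)=0$ implies $F$-free), the $r=1$ handling is fine (indeed overkill: $t(F,G)=0$ with $F$ bipartite already forces $G$ edgeless), and the degree-cleaning estimate is standard.

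However, the proof is not complete, and the gap is in exactly the place where all the content of the stability theorem lives: the boxed ``Claim.'' You state it, observe it would finish the proof, and then offer only a two-sentence sketch that does not hold up as written. In particular, the sentence ``an abnormally large part $A_i$ would, by the displayed inequality, make every one of its vertices have only $O(\varepsilon')N$ neighbours inside $A_i$, forcing $\Phi$ to be small'' is wrong on both counts: the optimality inequality $|N(v)\cap A_i|\le d(v)/r$ bounds internal neighbours by $N/r$, not by $O(\varepsilon')N$, and the minimum-degree bound in fact forces vertices of a large part to have \emph{many} neighbours inside it (a contradiction only when $|A_i|$ exceeds roughly $2N/r$, which is far from near-balance). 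Moreover, few internal neighbours in one part does not by itself make the global quantity $\Phi$ small. You candidly flag ``Making the balance dichotomy and the neighbourhood count uniform in $r$ is the delicate part'' and fall back to citing F\"uredi's regularity-lemma proof and the Andr\'asfai--Erd\H{o}s--S\'os phenomenon for $r=2$; but invoking those references is just restating that the theorem is known, not proving it. To close the gap you would need to actually carry out the balance-plus-$K_{r+1}$-count argument (or import a precise external lemma), since the Claim as stated is essentially equivalent in difficulty to the stability theorem itself.

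Two minor cosmetic issues worth fixing: ``$G'=G[V(G')]$'' is circular notation, and the degree-cleaning threshold is phrased relative to the original $n$ rather than the current vertex count, which is harmless here but should be stated consistently.
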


Using Theorem~\ref{th:stability}, we define 
\begin{equation}\label{eq:delta}\delta:=\min\left\{\delta(F_1,\varepsilon/2),\delta(F_2,\varepsilon/2),2\varepsilon\right\}\end{equation}
We also assume that $\xi$ satisfies
\begin{equation}
\label{eq:xi3}
0<\xi<\frac{\delta}{52}.
\end{equation}
Finally, we present the proof of Lemma~\ref{lem:roughStructure}, thereby accomplishing our primary objective of this section. 

\begin{proof}[Proof of Lemma~\ref{lem:roughStructure}]
First, let us bound $t(K_2,G_2)$ from below. By Lemmas~\ref{lem:V0small} and~\ref{lem:V1small}, we have $|V_2|=n-|V_0\cup V_1|\geq (1-26\xi )n$. Therefore, 
\[t(K_2,G_2)=\frac{2e(G_2)}{n^2} = \frac{1}{n^2}\sum_{v\in V}d_2(v)\geq \frac{1}{n^2}\sum_{v\in V_2}d_2(v)\geq \frac{|V_2|}{n^2}\left(1-\frac{1+2\xi}{\chi(F_2)-1}\right)(n-1)\]
\[\geq \frac{|V_2|}{n^2}\left(1-\frac{1+3\xi}{\chi(F_2)-1}\right)n\geq (1-26\xi)\left(1-\frac{1+3\xi}{\chi(F_2)-1}\right)\geq 1-\frac{1}{\chi(F_2)-1}-26\xi.\]
By \eqref{eq:xi3}, this is at least $1-\frac{1}{\chi(F_2)-1}-\frac{\delta}{2}$. So,
\begin{equation}
\label{eq:G2dense}
t(K_2,G_2)\geq 1-\frac{1}{\chi(F_2)-1}-\frac{\delta}{2}.
\end{equation}

Now, by Lemma~\ref{lem:F2small}, Theorem~\ref{th:removal} and \eqref{eq:beta}, there exists a spanning subgraph $G_2'$ of $G_2$ such that $t(F_2,G_2')=0$ and $t(K_2,G_2')\geq t(K_2,G_2)-\delta/2$. So, \eqref{eq:G2dense} implies that $t(K_2,G_2')\geq 1-\frac{1}{\chi(F_2)-1} - \delta$. By Theorem~\ref{th:stability} and \eqref{eq:delta}, there is a partition $A_1,A_2,\dots, A_{\chi(F_2)-1}$ of $V$ such that $\sum_{i=1}^{\chi(F_2)-1}e_{G_2'}(A_i)\leq (\varepsilon/2)n^2$. Since $t(K_2,G)=2e(G)/v(G)^2$ for any graph $G$, the inequality $t(K_2,G_2')\geq t(K_2,G_2)-\delta/2$ is equivalent to $e(G_2')\geq e(G_2) - (\delta/4)n^2$. Therefore, 
\[\sum_{i=1}^{\chi(F_2)-1}e_{2}(A_i)\leq \sum_{i=1}^{\chi(F_2)-1}e_{G_2'}(A_i) + (\delta/4)n^2 \leq (\varepsilon/2+\delta/4)n^2\leq\varepsilon n^2\]
where the last inequality follows from \eqref{eq:delta}. 
\end{proof}

\section{Proof of Theorem~\ref{th:hairy}: Exact Structure}
\label{sec:hairier}

The aim of this section is to complete the proof of Theorem~\ref{th:hairy}. The way that this breaks down is as follows. We start by obtaining control over the number of copies of $H_1$ in $G_1$ and $H_2$ in $G_2$ that contain any given vertex $v\in V$. In particular, we show that any two vertices in $V$ ``contribute'' roughly the same amount to $m(H_1,H_2;G_1)$. Thus, if one vertex contributes ``too much,'' then all vertices do, which leads to a violation of \eqref{eq:G1good}.

After this, we refine the rough structure afforded to us by Lemma~\ref{lem:roughStructure} until we get that $G_2$ is simply a Tur\'an graph with $\chi(F_2)-1$ parts. The first step in this process is to show that the parts have nearly the same size and almost all edges between pairs of parts are in $G_2$. We then show that the $G_2$-neighbourhood of every vertex $v\notin V_1$ roughly ``respects'' the partition. Next, we prove that the $G_2$-degree of any vertex is within a small window around $\left(1-\frac{1}{\chi(F_2)-1}\right)n$, which implies that $V_1=\emptyset$. After that, we can use the critical edge in $F_2$ to show that all edges within $A_i$ must be in $G_1$ for all $1\leq i\leq\chi(F_2)-1$. From this point, the theorem is easily deduced via a convexity argument.

\subsection{Every Vertex Contributes the Same}

For a graph $H$, a graph $G$ on vertex set $V$ and $v\in V$, define $t_{\inj}(H,G)(v)$ to be the probability that a random function from $V(H)$ to $V$ is an injective homomorphism from $H$ to $G$ whose image contains $v$. The main idea of the next lemma is that, if $u$ and $w$ are vertices such that an appropriate weighted sum of $t_{\inj}(H_1,G_1)(u)$ and $t_{\inj}(H_2,G_2)(u)$ is significantly smaller than the analogous sum for $w$, then one can get a better colouring by ``deleting'' $w$ and ``cloning'' $u$. This is a standard idea in extremal combinatorics going back at least as far as Zykov's proof of Tur\'an's Theorem~\cite{Zykov49}. This lemma is analogous to~\cite[Lemma~2.1]{FoxWigderson23}.

\begin{lem}
\label{lem:allVertsSame}
There exists a constant $C=C(H_1,H_2)>0$ such that, for any $u,w\in V$,
\[\rho_1\cdot t_{\inj}(H_1,G_1)(u) + \rho_2\cdot t_{\inj}(H_2,G_2)(u)\geq \rho_1\cdot t_{\inj}(H_1,G_1)(w) + \rho_2\cdot t_{\inj}(H_2,G_2)(w)-\frac{C}{n^2}.\]
\end{lem}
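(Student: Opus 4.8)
The plan is to use the standard ``delete-and-clone'' symmetrization argument. Fix $u,w\in V$ and suppose for contradiction that the weighted contribution of $u$ is much smaller than that of $w$; say $\rho_1\cdot t_{\inj}(H_1,G_1)(u)+\rho_2\cdot t_{\inj}(H_2,G_2)(u)$ is less than $\rho_1\cdot t_{\inj}(H_1,G_1)(w)+\rho_2\cdot t_{\inj}(H_2,G_2)(w)-\frac{C}{n^2}$ for a suitable constant $C=C(H_1,H_2)$ to be chosen. I would then build a new graph $G_1'$ on the same vertex set $V$ by deleting all edges at $w$ and instead making $w$ a ``clone'' of $u$: that is, $w$ is joined in $G_1'$ to exactly the $G_1$-neighbours of $u$ (and $w\sim u$ or not, according to a fixed convention; since we only care about large $n$ and an $O(1/n^2)$ error, the choice here is immaterial). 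Equivalently one keeps $G_1$ on $V\setminus\{w\}$ untouched and re-attaches $w$ as a twin of $u$. Note $\overline{G_1'}$ is obtained from $G_2=\overline{G_1}$ by the analogous operation, so $w$ becomes a twin of $u$ in $\overline{G_1'}$ as well.

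The key computation is to compare $m(H_1,H_2;G_1')$ with $m(H_1,H_2;G_1)$. Write $t_{\inj}(H,G)=t_{\inj}(H,G)(\text{avoid }w)+t_{\inj}(H,G)(w)$, where the first term counts injective homomorphisms whose image misses $w$. The ``avoid $w$'' terms involve only $V\setminus\{w\}$, and since $G_1$ and $G_1'$ agree on $V\setminus\{w\}$, these terms are unchanged (the normalization $v(G)^{v(H)}=n^{v(H)}$ is also unchanged). For the terms whose image contains $w$: because $w$ is a twin of $u$ in $G_1'$, an injective map $\varphi$ with $w\in\operatorname{im}(\varphi)$ is a homomorphism to $G_1'$ if and only if the map obtained by redirecting $w\mapsto\varphi^{-1}(w)$ to $u$ is — except that this may destroy injectivity when $u\in\operatorname{im}(\varphi)$ too. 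The number of injective maps whose image contains both $u$ and $w$ is $O(n^{v(H)-2})$, hence contributes $O(1/n^2)$ to $t_{\inj}$; absorbing this into the constant $C$, we get
\[
t_{\inj}(H_i,G_i')(w)=t_{\inj}(H_i,G_i)(u)+O(1/n^2)
\]
for $i\in\{1,2\}$ (with $G_2':=\overline{G_1'}$), where the implied constant depends only on $v(H_1),v(H_2)$. Summing with weights $\rho_1,\rho_2$ gives $m(H_1,H_2;G_1')\le m(H_1,H_2;G_1) - \rho_1 t_{\inj}(H_1,G_1)(w) - \rho_2 t_{\inj}(H_2,G_2)(w) + \rho_1 t_{\inj}(H_1,G_1)(u) + \rho_2 t_{\inj}(H_2,G_2)(u) + O(1/n^2)$. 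By the contradiction hypothesis the middle four terms sum to something smaller than $-C/n^2$, so choosing $C$ larger than the $O(1/n^2)$ error constant yields $m(H_1,H_2;G_1')<m(H_1,H_2;G_1)$, contradicting the minimality of $G_1$.

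The main obstacle — really the only subtlety — is bookkeeping the injectivity correction cleanly: one must argue that the difference between ``injective homomorphism counts through $w$'' before and after cloning is governed entirely by maps hitting both $u$ and $w$, of which there are $O(n^{v(H_i)-2})$, so that after dividing by $n^{v(H_i)}$ the error is $O(1/n^2)$ with a constant depending only on $H_1,H_2$ (e.g. one can take $C=v(H_1)^2+v(H_2)^2$ times a small absolute constant). It is also worth being slightly careful about the loop/edge convention at the pair $\{u,w\}$ in $G_1'$, but since flipping that single edge changes each $t_{\inj}$ by $O(1/n^2)$ as well, it can be folded into the same error term. Everything else is the routine twin-vertex identity for homomorphism counts.
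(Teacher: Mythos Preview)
Your proposal is correct and follows essentially the same delete-and-clone symmetrization argument as the paper: replace $w$ by a twin of $u$, observe that the ``avoid $w$'' contributions are unchanged, and bound the discrepancy in the ``through $w$'' contributions by the $O(n^{v(H_i)-2})$ maps hitting both $u$ and $w$. Your bookkeeping is in fact slightly cleaner than the paper's (you handle the $\{u,w\}$-edge convention explicitly), and the conclusion is identical.
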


\begin{proof}
Suppose, to the contrary, that the inequality does not hold for some $u,w\in V$. If we remove all edges incident to the vertex $w$ from $G_1$, then we lose all of the injective homomorphisms from $H_1$ to $G_1$ which map at least one vertex to $w$. Likewise, if we delete all edges incident to $w$ from $G_2$, then we lose all of the injective homomorphisms from $H_2$ to $G_2$ which map at least one vertex to $w$. (Note that, here, we are subtly using the assumption that the graphs $H_1$ and $H_2$ have no singleton components.) 

After deleting all such edges from $G_1$ and $G_2$, suppose that we add to $G_1$ all edges of the form $wv$ such that $uv\in E(G_1)$ and $v\neq w$ to form a new graph $G_1'$. Similarly, add to $G_2$ all edges of the form $wv$ such that $v$ is a vertex with $uv\notin E(G_1)$ and $v\neq w$ to get a graph $G_2'$. Note that $G_2'$ is the complement of $G_1'$. In adding these edges, we gain one injective homomorphism from $H_1$ to $G_1'$ per injective homomorphism from $H_1$ to $G_1$ that includes $u$ and not $w$. Similarly, we gain one injective homomorphsim from $H_2$ to $G_2'$ per injective homomorphism from $H_2$ to $G_2$ that includes $u$ and not $w$. Additionally, for each $i\in\{1,2\}$, we may also gain $O(n^{v(H_i)-2})$ injective homomorphisms which map to both $u$ and $w$. Thus, for each $i\in \{1,2\}$,
\[t_{\inj}(H_i,G_i') \leq t_{\inj}(H_i,G_i)-t_{\inj}(H_i,G_i)(w)+t_{\inj}(H_i,G_i)(u) + O(1/n^2)\]
where the constant factor on the $O(1/n^2)$ term is bounded by a function of $H_i$. Thus, assuming that the inequality in the lemma is not true, we have that $m(H_1,H_2;G_1')$ is at most $m(H_1,H_2;G_1)$ plus a $O(1/n^2)$ term, where the constant factor depends on $H_1$ and $H_2$, minus $C/n^2$. So, if $C$ is chosen large enough with respect to $H_1$ and $H_2$, we get that $G_1'$ contradicts our choice  of $G_1$. Thus, the lemma holds.
\end{proof}

Analogous to the definition of $t_{\inj}(H,G)(v)$, let $t(H,G)(v)$ be the probability that a uniformly random function from $V(H)$ to $V$ is a homomorphism from $H$ to $G$ whose image contains $v$. The following lemma restricts $t(H_i,G_i)(v)$ for every vertex $v$. 

\begin{lem}
\label{lem:contribsmall}
Suppose that $\{i,j\}=\{1,2\}$. For every $v\in V$, 
\[t(H_i,G_i)(v)\leq\frac{3\max\{v(H_1),v(H_2)\}}{n}\left(\frac{1}{\chi(F_j)-1}\right)^{v(H_i)-k(F_i)}.\]
\end{lem}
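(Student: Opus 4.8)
The key idea is to combine the ``every vertex contributes the same'' principle (Lemma~\ref{lem:allVertsSame}) with the global upper bound \eqref{eq:G1good} on $m(H_1,H_2;G_1)$. First I would observe that summing the quantity $\rho_1\cdot t_{\inj}(H_1,G_1)(v) + \rho_2\cdot t_{\inj}(H_2,G_2)(v)$ over all $v\in V$ counts each injective homomorphism once for every vertex in its image, so
\[\sum_{v\in V}\left(\rho_1\cdot t_{\inj}(H_1,G_1)(v) + \rho_2\cdot t_{\inj}(H_2,G_2)(v)\right) = \rho_1\cdot v(H_1)\cdot t_{\inj}(H_1,G_1) + \rho_2\cdot v(H_2)\cdot t_{\inj}(H_2,G_2),\]
which is at most $\max\{v(H_1),v(H_2)\}\cdot m(H_1,H_2;G_1)\le \max\{v(H_1),v(H_2)\}\cdot(1-o(1))$ by \eqref{eq:G1good}. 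Hence the \emph{average} over $v\in V$ of $\rho_1\cdot t_{\inj}(H_1,G_1)(v) + \rho_2\cdot t_{\inj}(H_2,G_2)(v)$ is at most $\max\{v(H_1),v(H_2)\}/n$ (for $n$ large). By Lemma~\ref{lem:allVertsSame}, every vertex $w$ is within $C/n^2$ of this average from above, so for \emph{every} $w\in V$,
\[\rho_1\cdot t_{\inj}(H_1,G_1)(w) + \rho_2\cdot t_{\inj}(H_2,G_2)(w)\le \frac{\max\{v(H_1),v(H_2)\}}{n} + \frac{C}{n^2} \le \frac{2\max\{v(H_1),v(H_2)\}}{n}\]
once $n$ is large enough relative to $C$ and $\max\{v(H_1),v(H_2)\}$.

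Next I would drop the $\rho_2$ term (it is nonnegative) to get $\rho_1\cdot t_{\inj}(H_1,G_1)(w)\le 2\max\{v(H_1),v(H_2)\}/n$, and analogously $\rho_2\cdot t_{\inj}(H_2,G_2)(w)\le 2\max\{v(H_1),v(H_2)\}/n$; so for $\{i,j\}=\{1,2\}$ we have $\rho_i\cdot t_{\inj}(H_i,G_i)(w)\le 2\max\{v(H_1),v(H_2)\}/n$. Now I convert from $t_{\inj}$ to $t$: since there are only $O(v(H_i)^2 n^{v(H_i)-1})$ non-injective functions $V(H_i)\to V$ whose image contains a fixed vertex $w$, we have $t(H_i,G_i)(w)\le t_{\inj}(H_i,G_i)(w) + O(1/n)$ where the implied constant depends only on $v(H_i)$. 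Finally I unravel the definitions of $\rho_i$ from \eqref{eq:samesame}: recall $\rho_i = (\chi(F_j)-1)^{v(H_i)-k(F_i)}$, so dividing through gives
\[t(H_i,G_i)(w)\le \frac{2\max\{v(H_1),v(H_2)\}}{n}\left(\frac{1}{\chi(F_j)-1}\right)^{v(H_i)-k(F_i)} + O(1/n),\]
and absorbing the $O(1/n)$ error into the leading term (using that $\left(\tfrac{1}{\chi(F_j)-1}\right)^{v(H_i)-k(F_i)}$ is a fixed positive constant and that the coefficient $2$ leaves room to bump up to $3$) yields the stated bound with the constant $3$, provided $n_0$ is chosen large enough.

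The only mildly delicate point — which I would not expect to be a genuine obstacle — is bookkeeping the error terms: one must ensure that the $o(1)$ in \eqref{eq:G1good}, the $C/n^2$ from Lemma~\ref{lem:allVertsSame}, and the $O(1/n)$ from the $t_{\inj}$-to-$t$ conversion are all dominated once $n\ge n_0$ for a suitably large $n_0 = n_0(H_1,H_2)$, so that the clean constant $3$ (rather than $2$) appears. Everything else is a direct averaging argument, so the main conceptual content is simply the observation that the sum of per-vertex contributions equals a $v(H_i)$-weighted version of $m(H_1,H_2;G_1)$, which is already known to be $\le 1-o(1)$.
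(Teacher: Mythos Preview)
Your approach is essentially the paper's: the same averaging identity $\sum_{u} t_{\inj}(H_i,G_i)(u)=v(H_i)\,t_{\inj}(H_i,G_i)$ combined with Lemma~\ref{lem:allVertsSame} and \eqref{eq:G1good}. The paper phrases it as a contradiction and you phrase it directly, but the content is the same.

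There is, however, a genuine gap in your error bookkeeping at the very end. You assert $t(H_i,G_i)(w)\le t_{\inj}(H_i,G_i)(w)+O(1/n)$ and then say you can ``absorb the $O(1/n)$ error into the leading term \ldots\ provided $n_0$ is chosen large enough.'' But the leading term you are comparing against,
\[
\frac{\max\{v(H_1),v(H_2)\}}{n}\left(\frac{1}{\chi(F_j)-1}\right)^{v(H_i)-k(F_i)},
\]
is itself of order $1/n$, so increasing $n$ does not change the ratio between the two. Worse, its coefficient $(\chi(F_j)-1)^{-(v(H_i)-k(F_i))}$ is exponentially small in $v(H_i)$ (recall $\chi(F_j)\ge 3$ and $v(H_i)\ge t_0$ is large), whereas the implicit constant in your $O(1/n)$ is polynomial in $v(H_i)$; so the absorption simply fails.

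The fix --- and this is exactly what the paper uses --- is to sharpen the non-injective count to $O(1/n^2)$: a non-injective function whose image contains the fixed vertex $w$ loses one factor of $n$ from the collision \emph{and} another from the constraint that some vertex hits $w$, giving at most $O\bigl(v(H_i)^3 n^{v(H_i)-2}\bigr)$ such functions and hence $t(H_i,G_i)(w)=t_{\inj}(H_i,G_i)(w)+O(1/n^2)$. With this $O(1/n^2)$ error, taking $n$ large really does absorb it and your argument closes.
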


\begin{proof}
Suppose, to the contrary, that there exists $v\in V$ such that
\begin{equation}\label{eq:thigibound}t(H_i,G_i)(v)>\frac{3\max\{v(H_1),v(H_2)\}}{n}\left(\frac{1}{\chi(F_j)-1}\right)^{v(H_i)-k(F_i)}.\end{equation}
By Lemma~\ref{lem:allVertsSame}, \eqref{eq:samesame}, \eqref{eq:thigibound} and the fact that $t_{\inj}(H_i,G_i)(v)=t(H_i,G_i)(v)+O(1/n^2)$, we get that, for large $n$, every $u\in V$ satisfies
\[\rho_1\cdot t_{\inj}(H_1,G_1)(u)+\rho_2\cdot t_{\inj}(H_2,G_2)(u)> \frac{2\max\{v(H_1),v(H_2)\}}{n}.\]
Summing this inequality over all $u\in V$ yields
\[2\max\{v(H_1),v(H_2)\}  < \sum_{u\in V}\left(\rho_1\cdot t_{\inj}(H_1,G_1)(u)+\rho_2\cdot t_{\inj}(H_2,G_2)(u)\right)\]
\[=\rho_1\cdot v(H_1)\cdot t_{\inj}(H_1,G_1) + \rho_2\cdot v(H_2)\cdot t_{\inj}(H_2,G_2)\]
\[\leq \max\{v(H_1),v(H_2)\}\left(\rho_1\cdot t_{\inj}(H_1,G_1) + \rho_2\cdot t_{\inj}(H_2,G_2)\right).\]
This contradicts \eqref{eq:G1good}, and thus the proof is complete. 
\end{proof}

\subsection{Refining the Partition}

We assume, throughout the remainder of this section, that $A_1,\dots, A_{\chi(F_2)-1}$ is a partition of $V$ as in Lemma~\ref{lem:roughStructure}. Let us show that the sets $A_1,\dots,A_{\chi(F_2)-1}$ have approximately the same size and that $G_1$ contains almost no edges between different parts. In order to prove this, we make the following assumption on $\varepsilon$. Recall that $\chi = \max\{\chi(F_1), \chi(F_2)\}$.
\begin{equation}\label{eq:epsilon1}
0<\varepsilon<\frac{1}{12\chi^4}.
\end{equation}
The next lemma is analogous to~\cite[Claim~3.8]{FoxWigderson23}.

\begin{lem}
\label{lem:partitionProperties}
For $1\leq i\neq j\leq \chi(F_2)-1$,
\begin{enumerate}
\stepcounter{equation}
    \item\label{eq:similarSizes} $\left||A_i|-\frac{n}{\chi(F_2)-1}\right|\leq \sqrt{3\varepsilon} \cdot n$ and
\stepcounter{equation}
    \item\label{eq:denseBetween} $e_2(A_i,A_j)\geq (1-2\chi(F_2)^2\varepsilon)|A_i||A_j|$.
\end{enumerate}
\end{lem}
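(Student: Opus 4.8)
The idea is to extract both parts from the single inequality $\sum_{i} e_2(A_i)\le \varepsilon n^2$ provided by Lemma~\ref{lem:roughStructure}, using a double-counting argument on the complement edges together with the global density lower bound \eqref{eq:G2dense}. First I would set $a_i:=|A_i|$ and write $\sum_{i<j} e_2(A_i,A_j) = 2e(G_2) - 2\sum_i e_2(A_i)$. Since $t(K_2,G_2)\ge 1-\frac{1}{\chi(F_2)-1}-\frac{\delta}{2}$ from \eqref{eq:G2dense} and $\delta\le 2\varepsilon$ by \eqref{eq:delta}, we have $e(G_2)\ge \left(1-\frac{1}{\chi(F_2)-1}\right)\frac{n^2}{2} - \varepsilon n^2$ (up to lower-order terms). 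On the other hand, $\sum_{i<j}e_2(A_i,A_j)\le \sum_{i<j}a_ia_j$, which is maximized when the $a_i$ are equal, with maximum $\left(1-\frac{1}{\chi(F_2)-1}\right)\frac{n^2}{2}$. Comparing these, the deficiency $\sum_{i<j}(a_ia_j - e_2(A_i,A_j))$ is at most $O(\varepsilon)n^2$, and simultaneously $\binom{\chi(F_2)-1}{2}\max_{i<j} a_ia_j - \sum_{i<j}a_ia_j$ must be $O(\varepsilon)n^2$ as well by the gap between $\sum_{i<j}a_ia_j$ and its maximum.

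For part \eqref{eq:similarSizes}: the quantity $\left(1-\frac{1}{\chi(F_2)-1}\right)\frac{n^2}{2} - \sum_{i<j}a_ia_j$ can be rewritten (using $\sum a_i = n$) as a positive-definite quadratic form in the deviations $a_i - \frac{n}{\chi(F_2)-1}$; concretely it equals $\frac{1}{2}\sum_i\left(a_i-\frac{n}{\chi(F_2)-1}\right)^2$ up to a constant factor depending on $\chi(F_2)-1$. Since this difference is at most $\frac{3}{2}\varepsilon n^2$ (bounding the $e(G_2)$ term and the $\sum e_2(A_i)\le\varepsilon n^2$ term), each squared deviation is at most $3\varepsilon n^2$, giving $\left||A_i| - \frac{n}{\chi(F_2)-1}\right|\le\sqrt{3\varepsilon}\,n$. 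I would be a little careful with the exact constants so that \eqref{eq:epsilon1} (which ensures $\sqrt{3\varepsilon}\,n$ is small relative to $\frac{n}{\chi(F_2)-1}$, so the parts are genuinely non-empty and comparably sized) is all that's needed.

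For part \eqref{eq:denseBetween}: from the same comparison, $\sum_{i<j}(a_ia_j - e_2(A_i,A_j)) \le \frac{3}{2}\varepsilon n^2$ (each summand is non-negative since $e_2(A_i,A_j)\le a_ia_j$), so for every fixed pair $i\ne j$ we get $a_ia_j - e_2(A_i,A_j)\le \frac{3}{2}\varepsilon n^2$. Now I use part \eqref{eq:similarSizes}: since $a_i, a_j \ge \frac{n}{\chi(F_2)-1} - \sqrt{3\varepsilon}\,n \ge \frac{n}{2\chi(F_2)}$ (using \eqref{eq:epsilon1} to absorb the error term), we have $a_ia_j \ge \frac{n^2}{4\chi(F_2)^2}$, hence $\frac{3}{2}\varepsilon n^2 \le 6\chi(F_2)^2\varepsilon\cdot a_ia_j$ — and a slightly sharper bookkeeping of the constant (replacing the crude $\frac{3}{2}\varepsilon n^2$ upper bound by $\varepsilon n^2$ plus the $\delta/2$ slack, and using that only the $\sum e_2(A_i)$ term and the density slack contribute to a single pair's deficiency) yields $a_ia_j - e_2(A_i,A_j)\le 2\chi(F_2)^2\varepsilon\, a_ia_j$, which is exactly \eqref{eq:denseBetween}.

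The main obstacle I anticipate is purely bookkeeping: getting the constants to line up so that the stated bounds $\sqrt{3\varepsilon}\,n$ and $2\chi(F_2)^2\varepsilon$ come out exactly, rather than with slightly worse constants. This requires being careful about (i) the lower-order $O(n)$ terms coming from $\binom{n}{2}$ versus $\frac{n^2}{2}$ and from $(n-1)$ versus $n$ in the degree bounds, which are negligible for large $n$ but must be tracked, and (ii) distributing the total deficiency $\sum_{i<j}(a_ia_j - e_2(A_i,A_j))$ — which combines the $\sum_i e_2(A_i)\le\varepsilon n^2$ slack with the $\delta/2 \le \varepsilon$ density slack — correctly between the "unequal sizes" penalty and the "missing cross edges" penalty. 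There is no conceptual difficulty beyond the quadratic-form identity $\sum_{i<j}a_ia_j = \frac{1}{2}\left(n^2 - \sum_i a_i^2\right)$ and the observation that $\sum_i a_i^2$ is minimized at equal parts.
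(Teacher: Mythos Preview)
Your approach is essentially identical to the paper's: both proofs combine the identity
\[
1-\frac{1}{\chi(F_2)-1}=\sum_i\left(\frac{|A_i|}{n}-\frac{1}{\chi(F_2)-1}\right)^2+2\sum_{i<j}\frac{|A_i||A_j|}{n^2}
\]
with the density lower bound \eqref{eq:G2dense} (in the form $t(K_2,G_2)\ge 1-\frac{1}{\chi(F_2)-1}-\varepsilon$) and Lemma~\ref{lem:roughStructure} to obtain
\[
\sum_i\left(\frac{|A_i|}{n}-\frac{1}{\chi(F_2)-1}\right)^2+2\sum_{i<j}\frac{|A_i||A_j|-e_2(A_i,A_j)}{n^2}\le 3\varepsilon,
\]
from which both parts follow by non-negativity of the summands.

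One small correction on your bookkeeping for \eqref{eq:denseBetween}: the bound $|A_i||A_j|-e_2(A_i,A_j)\le\frac{3}{2}\varepsilon n^2$ cannot be sharpened in the way you suggest, since the $\frac{3}{2}\varepsilon$ already accounts precisely for the $\varepsilon$ density slack plus the $2\varepsilon$ from $\sum_i e_2(A_i)\le\varepsilon n^2$. The way to reach the constant $2\chi(F_2)^2\varepsilon$ is instead to use the sharper lower bound $|A_i|,|A_j|\ge\left(\frac{1}{\chi(F_2)-1}-\sqrt{3\varepsilon}\right)n$ from \eqref{eq:similarSizes} directly (rather than your crude $\frac{n}{2\chi(F_2)}$), which together with \eqref{eq:epsilon1} gives $\frac{3\varepsilon n^2}{2|A_i||A_j|}\le 2\chi(F_2)^2\varepsilon$ with room to spare. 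This is exactly what the paper does.
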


\begin{proof}
First observe that, since $\sum_{i=1}^{\chi(F_2)-1}|A_i|=n$,
\[\sum_{i=1}^{\chi(F_2)-1}\left(\frac{|A_i|}{n}-\frac{1}{\chi(F_2)-1}\right)^2 = \sum_{i=1}^{\chi(F_2)-1}\frac{|A_i|^2}{n^2} - 2\sum_{i=1}^{\chi(F_2)-1}\frac{|A_i|}{n(\chi(F_2)-1)} + \sum_{i=1}^{\chi(F_2)-1}\left(\frac{1}{\chi(F_2)-1}\right)^2\]
\[=\sum_{i=1}^{\chi(F_2)-1}\frac{|A_i|^2}{n^2} - \frac{1}{\chi(F_2)-1}\]
and, also,
\[1=\left(\sum_{i=1}^{\chi(F_2)-1}\frac{|A_i|}{n}\right)^2 = \sum_{i=1}^{\chi(F_2)-1}\frac{|A_i|^2}{n^2} + 2\left(\sum_{1\leq i< j\leq \chi(F_2)-1}\frac{|A_i||A_j|}{n^2}\right).\]
Solving for $\sum_{i=1}^{\chi(F_2)-1}\frac{|A_i|^2}{n^2}$ in one of these two equations and substituting into the other yields
\[\sum_{i=1}^{\chi(F_2)-1}\left(\frac{|A_i|}{n}-\frac{1}{\chi(F_2)-1}\right)^2 + \frac{1}{\chi(F_2)-1} = 1-2\left(\sum_{1\leq i< j\leq \chi(F_2)-1}\frac{|A_i||A_j|}{n^2}\right)\]
which is equivalent to
\begin{equation}
    \label{eq:muthing}
1-\frac{1}{\chi(F_2)-1} = \sum_{i=1}^{\chi(F_2)-1}\left(\frac{|A_i|}{n}-\frac{1}{\chi(F_2)-1}\right)^2 +2\left(\sum_{1\leq i< j\leq \chi(F_2)-1}\frac{|A_i||A_j|}{n^2}\right).
\end{equation}
Also, by \eqref{eq:delta} and \eqref{eq:G2dense} (which remains true outside of the context of the proof of Lemma~\ref{lem:roughStructure}), we have $t(K_2,G_2)\geq 1-\frac{1}{\chi(F_2)-1}-\varepsilon$. So,
\[1-\frac{1}{\chi(F_2)-1}-\varepsilon \leq t(K_2,G_2) =\frac{2e(G_2)}{n^2} = \sum_{i=1}^{\chi(F_2)-1}\frac{2e_2(A_i)}{n^2} + \sum_{1\leq i< j\leq \chi(F_2)-1}\frac{2e_2(A_i,A_j)}{n^2}\]
\[\leq 2\varepsilon + 2\left(\sum_{1\leq i< j\leq \chi(F_2)-1}\frac{e_2(A_i,A_j)}{n^2}\right)\]
where the last inequality is by Lemma~\ref{lem:roughStructure}. Substituting the expression for $1-\frac{1}{\chi(F_2)-1}$ in \eqref{eq:muthing} into this inequality and rearranging yields
\[\sum_{i=1}^{\chi(F_2)-1}\left(\frac{|A_i|}{n}-\frac{1}{\chi(F_2)-1}\right)^2 + 2\sum_{1\leq i< j\leq \chi(F_2)-1}\left(\frac{|A_i||A_j|}{n^2}-\frac{e_2(A_i,A_j)}{n^2}\right) \leq 3\varepsilon.\]
Since all summands on the left side are non-negative, we get
\[\left(\frac{|A_i|}{n}-\frac{1}{\chi(F_2)-1}\right)^2\leq 3\varepsilon\]
for all $i$, which proves \eqref{eq:similarSizes}. Similarly, for each $i\neq j$, the above inequality implies that 
\[2\left(\frac{|A_i||A_j|}{n^2}-\frac{e_2(A_i,A_j)}{n^2}\right) \leq 3\varepsilon\]
and so 
\[e_2(A_i,A_j)\geq |A_i||A_j|-(3\varepsilon/2)n^2 = \left(1-\frac{3\varepsilon n^2}{2|A_i||A_j|}\right)|A_i||A_j|.\]
By \eqref{eq:similarSizes}, the right side is at least
\[\left(1-\frac{3\varepsilon }{2\left(\frac{1}{\chi(F_2)-1}-\sqrt{3\varepsilon}\right)^2}\right)|A_i||A_j|\]
 and by \eqref{eq:epsilon1}, this is at least $(1-2\chi(F_2)^2\varepsilon)|A_i||A_j|$ (with room to spare). Therefore, \eqref{eq:denseBetween} holds.
\end{proof}

Next, we show that the $G_2$-neighbourhood of every vertex that is not in $V_1$ roughly ``respects'' the partition $A_1,\dots,A_{\chi(F_2)-1}$ (see Lemma~\ref{lem:noBigToAll} below). We assume that $\varepsilon$ satisfies the following condition:
\begin{equation}
\label{eq:epsilon2}
0<\varepsilon<\frac{\theta^2}{4\cdot\chi^2f^2}.
\end{equation}
Also, we assume that $t_0$ is chosen large enough so that, for all $t\geq t_0$, we have
\begin{equation}
\label{eq:t0bound2}
\left(\frac{5}{4}\right)^{t/2} > \frac{6(t+f)}{q\left(\frac{\theta}{2\chi}\right)^{f}}.\end{equation}

\begin{defn}
\label{defn:bad}
Say that a vertex $v\in V$ is \emph{bad} if, for all $1\leq i\leq \chi(F_2)-1$, the number of $G_2$-neighbours of $v$ in $A_i\cap V_2$  is at least $\theta |A_i|$. Let $B$ be the set of all bad vertices. 
\end{defn}

\begin{lem}
\label{lem:noBigToAll}
$B\subseteq V_1$. 
\end{lem}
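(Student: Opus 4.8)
The plan is to show the contrapositive: if $v \notin V_1$, then $v$ is not bad, meaning there is some part $A_i$ in which $v$ has few $G_2$-neighbours inside $A_i \cap V_2$. The key idea is that a bad vertex outside $V_1$ would let us build too many copies of $H_2$ in $G_2$ through $v$, contradicting Lemma~\ref{lem:contribsmall}. So first I would suppose for contradiction that $v \in B$ but $v \notin V_1$. Since $v \notin V_1$, either $v \in V_0$ or $v \in V_2$; in either case, by Observation~\ref{obs:Vihigh} (applied when $v \in V_2$) or directly from the definition of $V_0$ together with $d_1(v) + d_2(v) = n-1$ (when $v \in V_0$), we know $d_1(v)$ is reasonably large, i.e. at least roughly $\frac{1+\xi}{\chi(F_2)-1} n$. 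In fact the cleanest route is: if $v\notin V_2$, then $v$ has many $G_1$-neighbours, which will let us build many copies of $H_2$ in $G_2$ using $v$ as a ``hair-attachment point'' for a copy of $F_2$ placed in $G_2$.

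The core construction counts copies of $H_2$ in $G_2$ whose image contains $v$. Since $F_2$ is $(\chi(F_2))$-chromatic with a critical edge, a good copy of $F_2$ can be embedded in $G_2$ so that one vertex is $v$ itself: using that $v$ is bad, it has $\geq \theta|A_i|$ neighbours in each $A_i \cap V_2$, and the parts are nearly balanced and nearly complete to one another in $G_2$ by Lemma~\ref{lem:partitionProperties}; an embedding of $F_2$ sending its colour classes appropriately into distinct parts $A_i$, with $v$ in one part, survives because cross-part edges are overwhelmingly present in $G_2$ and $v$'s neighbourhoods into each part are linear in size. This gives $\Omega(n^{v(F_2)-1})$ copies of $F_2$ in $G_2$ through $v$ (the factor $\theta^{f}$ and the balance estimates produce the constant, which is why \eqref{eq:epsilon2} is needed). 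Then each such copy of $F_2$ extends to a copy of $H_2$ by attaching the $t_2$ pendant vertices: each pendant vertex $w$ must land on a $G_2$-neighbour of the image of its unique neighbour. If that neighbour's image lies in $V_2$, it has $\geq (1 - \frac{1+2\xi}{\chi(F_2)-1})(n-1)$ choices; if it lies in $V_1$, it might have few $G_2$-neighbours, but one can arrange the embedding of $F_2$ so that all vertices of $F_2$ that receive pendants are mapped into $V_2$ — either by always embedding $F_2$ entirely inside $V_2$ except for $v$ (which we can do since $v$'s neighbours into each part lie in $A_i\cap V_2$), or by noting $V_1$ is tiny. That yields roughly $\bigl(\frac{1}{\chi(F_2)-1}\bigr)^{t_2}\cdot(\text{const})\cdot(\frac{5}{4})^{t_2}$ growth per unit via Observation~\ref{obs:Vihigh}-type degree bounds, and the inequality \eqref{eq:t0bound2} (which builds in a $(5/4)^{t/2}$ factor beating $6(t+f)/(q(\theta/2\chi)^f)$) is precisely what is needed to push $t(H_2,G_2)(v)$ above the threshold in Lemma~\ref{lem:contribsmall}. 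The role of $q$ and \eqref{eq:relSizes} enters because $v(H_2)$ could be much larger or smaller than $v(H_1)$, and the threshold in Lemma~\ref{lem:contribsmall} has $\max\{v(H_1),v(H_2)\}$ in it, so one needs $v(H_2) \geq q\max\{v(H_1),v(H_2)\}$ to absorb that factor.

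Concretely, the steps in order: (1) assume $v\in B\setminus V_1$; (2) deduce $v$ has $\geq\theta|A_i\cap V_2|$ or at least $\tfrac\theta2|A_i|$ $G_2$-neighbours in each part (using \eqref{eq:similarSizes} to pass between $|A_i|$ and $n/(\chi(F_2)-1)$, and $V_0$ small to pass to $A_i\cap V_2$); (3) fix a proper colouring of $F_2$ with colour classes $C_1,\dots,C_{\chi(F_2)}$ realising the critical edge, and count embeddings of $F_2$ into $G_2$ with one distinguished vertex $\mapsto v$ and the class $C_1\setminus\{\text{that vertex}\}$ plus $C_2$ into the part containing none of $v$'s ``deficient'' directions — using Lemma~\ref{lem:partitionProperties}\eqref{eq:denseBetween} to guarantee cross edges and $v$'s $\theta|A_i|$-neighbourhoods to guarantee the edges at $v$ — getting $\geq c\,n^{v(F_2)-1}$ such copies with all non-$v$ vertices in $V_2$; (4) extend each to a copy of $H_2$ in $G_2$ via the $t_2$ hairs, each contributing a factor $\geq (1-\frac{1+2\xi}{\chi(F_2)-1})(n-1) \geq \frac54\cdot\frac{1+\xi}{\chi(F_2)-1}\,n$ by Observation~\ref{obs:Vihigh}, being careful that the ordering of hair-additions is respected (each hair attaches to an already-placed vertex, and all attachment points are in $V_2$); (5) conclude $t(H_2,G_2)(v) \geq c\cdot n^{-1}\cdot(\tfrac54)^{t_2}\bigl(\tfrac{1}{\chi(F_2)-1}\bigr)^{v(H_2)-v(F_2)}$, and check via \eqref{eq:t0bound2}, \eqref{eq:relSizes} and $t_2\geq t_0$ that this exceeds $\frac{3\max\{v(H_1),v(H_2)\}}{n}\bigl(\frac{1}{\chi(F_2)-1}\bigr)^{v(H_2)-k(F_2)}$, contradicting Lemma~\ref{lem:contribsmall}.

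The main obstacle I anticipate is step (3): carefully setting up the embedding of $F_2$ so that (a) $v$ plays the role of a vertex whose only ``special'' neighbours in the $F_2$-copy go into parts where $v$ has its guaranteed $\theta|A_i|$ neighbourhood, (b) all the remaining $v(F_2)-1$ vertices land in $V_2$ (so the subsequent hair-extension count is uniform and large), and (c) the count is genuinely $\Omega(n^{v(F_2)-1})$ rather than being killed by the $2\chi(F_2)^2\varepsilon$ error term in Lemma~\ref{lem:partitionProperties}\eqref{eq:denseBetween} — this is where \eqref{eq:epsilon2} has to be invoked and where one must be careful that $F_2$ might have up to $\binom{v(F_2)}{2}$ edges, each contributing a $(1 - 2\chi(F_2)^2\varepsilon)$ factor, so $\varepsilon$ must be small relative to $f^2$ and $\chi^2$. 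A secondary subtlety is justifying the passage $\theta|A_i\cap V_2| \to \tfrac\theta2|A_i|$ or similar, which needs $|V_0|<\xi n$ from Lemma~\ref{lem:V0small} and $\xi$ small relative to $\theta$ — presumably another of the ``as needed'' parameter constraints on $\xi$ relative to $\theta$.
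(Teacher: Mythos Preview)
Your overall strategy matches the paper's: assume $v\in B\setminus V_1$, embed $F_2$ in $G_2$ through $v$ using the badness of $v$ and the partition structure, extend the pendants, and contradict Lemma~\ref{lem:contribsmall}. However, two specific points are wrong and would cause the argument to fail as written.

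First, you systematically use $\chi(F_2)-1$ where the correct quantity is $\chi(F_1)-1$. Observation~\ref{obs:Vihigh} for $w\in V_2$ gives $d_2(w)>\tfrac{5}{4}\cdot\tfrac{1+\xi}{\chi(F_1)-1}\,n$, not $\tfrac{5}{4}\cdot\tfrac{1+\xi}{\chi(F_2)-1}\,n$; and the threshold in Lemma~\ref{lem:contribsmall} for $i=2$ is $\tfrac{3\max\{v(H_1),v(H_2)\}}{n}\bigl(\tfrac{1}{\chi(F_1)-1}\bigr)^{v(H_2)-k(F_2)}$. With your base $\tfrac{1}{\chi(F_2)-1}$, the comparison does not line up: if $\chi(F_2)>\chi(F_1)$ the shortfall is exponential in $t_2$ and cannot be absorbed by any fixed power of $5/4$. (Also note that the inequality $(1-\tfrac{1+2\xi}{\chi(F_2)-1})(n-1)\ge \tfrac{5}{4}\cdot\tfrac{1+\xi}{\chi(F_2)-1}\,n$ you wrote is false when $\chi(F_2)=3$.)

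Second, you cannot arrange that ``all attachment points are in $V_2$'' and hence claim the full factor $(5/4)^{t_2}$. The vertex of $F_2$ that is sent to $v$ must be an endpoint of a critical edge (otherwise $F_2$ minus that vertex still has chromatic number $\chi(F_2)$ and cannot be embedded into a $(\chi(F_2)-1)$-partite structure), and $v$ itself may lie in $V_0$ rather than $V_2$. Pendants of $H_2$ attached to that vertex therefore only get the weaker factor $d_2(v)\ge\tfrac{1}{\chi(F_1)-1}\,n$ (coming from $v\notin V_1$), without the $5/4$. The paper's missing idea here is to pick $v_0$ to be the endpoint of the critical edge with \emph{fewer} pendants, so that $p(v_0)\le t_2/2$; then at least $t_2/2$ of the pendants attach to images in $V_2$ and contribute the $5/4$, giving $(5/4)^{t_2/2}$ overall---which is exactly the exponent appearing in \eqref{eq:t0bound2}. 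Your step~(3) is also garbled (merging two colour classes $C_1,C_2$ into one part ignores edges between them); the clean route is to delete $v_0$, observe $F_2\setminus\{v_0\}$ is $(\chi(F_2)-1)$-colourable, and embed it into the $G_2$-neighbourhood of $v$ across the parts $A_1,\dots,A_{\chi(F_2)-1}$.
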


\begin{proof}
Suppose, to the contrary, that there exists a bad vertex $v\notin V_1$. For each vertex $x$ of $F_2$, let $p(x)$ be the number of pendant edges incident to $x$ which were added during the construction of $H_2$ from $F_2$. Let $v_0z$ be a critical edge of $F_2$ where we assume, without loss of generality, that $p(v_0)\leq p(z)$. Then, in particular, at most half of the pendant edges added in the construction of $H_2$ are incident to $v_0$; i.e. $p(v_0)\leq \frac{v(H_2)-v(F_2)}{2}$. Let $F_2':=F_2\setminus \{v_0\}$ and note that, since $v_0z$ is a critical edge, $\chi(F_2')<\chi(F_2)$. Fix a proper colouring $\psi:V(F_2')\to\{1,\dots, \chi(F_2)-1\}$ of $F_2'$. Our aim is to prove a lower bound on $t(H_2,G_2)(v)$ which is large enough to contradict Lemma~\ref{lem:contribsmall}.

Let $S$ be the neighbourhood of $v$ in $G_2$. The probability that a random function $\varphi$ from $V(H_2)$ to $V$ is a homomorphism from $H_2$ to $G_2$ such that $\varphi(v_0)=v$ is at least the probability that the restriction of $\varphi$ to $V(F_2')$ is a homomorphism from $F_2'$ to $G_2[S\cap V_2]$, times $1/n$ (the probability that $\varphi(v_0)=v$), times the probability that every vertex of $V(H_2)\setminus V(F_2)$ is mapped to a $G_2$-neighbour of the image of its unique neighbour in $H_2$. Taking into account that $d_2(w)\geq \left(\frac{5}{4}\cdot\frac{1}{\chi(F_1)-1}\right)n$ for all $w\in V_2$ by Observation~\ref{obs:Vihigh} and that $v\notin V_1$, we have
\[d_2(v)=n-1-d_1(v)\geq \left(\frac{1+2\xi}{\chi(F_1)-1}\right)(n-1) > \left(\frac{1}{\chi(F_1)-1}\right)n.\]
Thus, $t(H_2,G_2)(v)$ is greater than
\begin{equation}\label{eq:F2'}\left(\frac{|S\cap V_2|}{n}\right)^{v(F_2')}t(F_2',G[S\cap V_2]) \frac{1}{n} \left(\frac{1}{\chi(F_1)-1}\right)^{p(v_0)}\left(\frac{5}{4}\cdot\frac{1}{\chi(F_1)-1}\right)^{v(H_2)-v(F_2)-p(v_0)}.\end{equation}

Next, we bound $\left(|S\cap V_2|/n\right)^{v(F_2')}t(F_2',G[S\cap V_2])$  from below. First, since $v$ is bad, we have that $|S\cap A_i\cap V_2|\geq \theta|A_i|$ for all $1\leq i\leq \chi(F_2)-1$. So, if we map $V(F_2')$ randomly to $V$, then the probability that every vertex $w$ of $F_2'$ is mapped to $S\cap A_{\psi(w)}\cap V_2$ is at least $\prod_{w\in V(F_2')}(\theta|A_{\psi(w)}|/n)$ which, by \eqref{eq:similarSizes}, is at least $\theta^{v(F_2')}\left(\frac{1}{\chi(F_2)-1}-\sqrt{3\varepsilon}\right)^{v(F_2')}$. By \eqref{eq:denseBetween}, the number of non-edges in $G_2$ from $S\cap A_i\cap V_2$ to $S\cap A_j\cap V_2$ for $i\neq j$ is at most $2\chi(F_2)^2\varepsilon|A_i||A_j|$ which, since $v$ is bad, is at most
\[\frac{2\chi(F_2)^2\varepsilon|S\cap A_i\cap V_2||S\cap A_j\cap V_2|}{\theta^2}.\]
Thus, for any fixed edge $wy$ of $F_2'$, the conditional probability that $\varphi(w)$ is not adjacent to $\varphi(y)$ given that $\varphi(w)\in S\cap A_{\psi(w)}\cap V_2$ and $\varphi(y)\in S\cap A_{\psi(y)}\cap V_2$ is at most $\frac{2\chi(F_2)^2\varepsilon}{\theta^2}$. By taking a union bound over all edges of $F_2'$, we get that the probability that every vertex $w$ of $F_2'$ is mapped to $S\cap A_{\psi(w)}\cap V_2$ and no edge of $F_2'$ is mapped to a non-edge of $G_2$ is at least
\[\theta^{v(F_2')}\left(\frac{1}{\chi(F_2)-1}-\sqrt{3\varepsilon}\right)^{v(F_2')}\left(1-\frac{2e(F_2')\chi(F_2)^2\varepsilon}{\theta^2}\right).\]
By \eqref{eq:epsilon1}, the product of the first two factors is at least $\left(\frac{\theta}{\chi(F_2)}\right)^{v(F_2)}$ and, by \eqref{eq:epsilon2}, the third factor is at least $1/2$.  So, the expression in \eqref{eq:F2'} is at least
\[\frac{1}{2}\left(\frac{\theta}{\chi(F_2)}\right)^{v(F_2)}\frac{1}{n} \left(\frac{1}{\chi(F_1)-1}\right)^{p(v_0)}\left(\frac{5}{4}\cdot\frac{1}{\chi(F_1)-1}\right)^{v(H_2)-v(F_2)-p(v_0)}\]
\[= \frac{1}{2}\left(\frac{\theta}{\chi(F_2)}\right)^{v(F_2)}\frac{1}{n} \left(\frac{4}{5}\right)^{p(v_0)}\left(\frac{5}{4}\cdot\frac{1}{\chi(F_1)-1}\right)^{v(H_2)-v(F_2)}.\]
Now, since $p(v_0)\leq \frac{v(H_2)-v(F_2)}{2}$, we get that this is at least
\[\frac{1}{2}\left(\frac{\theta}{\chi(F_2)}\right)^{v(F_2)}\frac{1}{n} \left(\frac{4}{5}\right)^{\frac{v(H_2)-v(F_2)}{2}}\left(\frac{5}{4}\cdot\frac{1}{\chi(F_1)-1}\right)^{v(H_2)-v(F_2)}\]
\[= \frac{1}{2}\left(\frac{\theta}{\chi(F_2)}\right)^{v(F_2)}\frac{1}{n} \left(\frac{5}{4}\right)^{\frac{v(H_2)-v(F_2)}{2}}\left(\frac{1}{\chi(F_1)-1}\right)^{v(H_2)-v(F_2)}\]
\[= \frac{1}{2}\left(\frac{\theta}{\chi(F_2)}\right)^{v(F_2)}\frac{1}{n} \left(\frac{5}{4}\right)^{t_2/2}\left(\frac{1}{\chi(F_1)-1}\right)^{v(H_2)-v(F_2)}.\]
By \eqref{eq:t0bound2} and the fact that $t_2\geq t_0$, this is at least
\[\frac{1}{2}\left(\frac{\theta}{\chi(F_2)}\right)^{v(F_2)}\frac{1}{n} \left(\frac{6v(H_2)}{q\left(\frac{\theta}{\chi(F_2)}\right)^{v(F_2)}}\right)\left(\frac{1}{\chi(F_1)-1}\right)^{v(H_2)-v(F_2)}\]
\[> \frac{3v(H_2)}{q\cdot n}\left(\frac{1}{\chi(F_1)-1}\right)^{v(H_2)-k(F_2)}\geq \frac{3\max\{v(H_1),v(H_2)\}}{n}\left(\frac{1}{\chi(F_1)-1}\right)^{v(H_2)-k(F_2)}\]
where the penultimate step uses $v(F_2)> k(F_2)$ and the last step uses \eqref{eq:relSizes}. This contradicts Lemma~\ref{lem:contribsmall} and completes the proof. 
\end{proof}

Using the above lemma, it follows relatively easily that $d_2(v)$ cannot be too large for any vertex $v \in V$. To verify this, we use the following assumption:
\begin{equation}
\label{eq:xi4}
0<\xi< \frac{\theta}{26\chi}.
\end{equation}

\begin{lem}
\label{lem:noTinyRed}
For every $v\in V$,
\[d_2(v)\leq \left(1-\frac{1-3\theta}{\chi(F_2)-1}\right)(n-1).\]
\end{lem}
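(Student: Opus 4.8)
The plan is to distinguish two cases according to whether $v\in V_1$, after first disposing of a degenerate situation. Since $d_2(v)\le n-1$ always holds, if $1-3\theta\le 0$ then $\bigl(1-\tfrac{1-3\theta}{\chi(F_2)-1}\bigr)(n-1)\ge n-1\ge d_2(v)$ and there is nothing to prove; so I would assume $1-3\theta>0$ (in particular $\theta<1/3$) from now on. If $v\in V_1$, the bound is immediate from the degree information: by Definition~\ref{def:V1V2} and $d_1(v)+d_2(v)=n-1$ we get $d_2(v)\le\tfrac{1+2\xi}{\chi(F_1)-1}(n-1)\le\tfrac{1+2\xi}{2}(n-1)$ using $\chi(F_1)\ge 3$, whereas $\chi(F_2)\ge 3$ gives $\bigl(1-\tfrac{1-3\theta}{\chi(F_2)-1}\bigr)(n-1)\ge\tfrac{1+3\theta}{2}(n-1)$, and $2\xi<3\theta$ by \eqref{eq:xi4}.

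The substantive case is $v\notin V_1$, where Lemma~\ref{lem:noBigToAll} does the real work: it says $v$ is not bad, so there is a part $A_{i^*}$ of the partition from Lemma~\ref{lem:roughStructure} in which $v$ has fewer than $\theta|A_{i^*}|$ of its $G_2$-neighbours lying in $V_2$. I would then bound $d_2(v)$ by counting the $G_2$-neighbours of $v$ part by part: inside $A_{i^*}$ there are fewer than $\theta|A_{i^*}|$ such neighbours that lie in $V_2$ and at most $|A_{i^*}\setminus V_2|\le|V_0\cup V_1|$ that do not, while the remaining parts contribute at most $\sum_{i\neq i^*}|A_i|=n-|A_{i^*}|$. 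Since $V=V_0\sqcup V_1\sqcup V_2$ (Lemma~\ref{lem:V1V2disjoint}) and $|V_0|<\xi n$, $|V_1|<25\xi n$ (Lemmas~\ref{lem:V0small} and~\ref{lem:V1small}), this gives $d_2(v)<n+26\xi n-(1-\theta)|A_{i^*}|$; feeding in $|A_{i^*}|\ge\tfrac{n}{\chi(F_2)-1}-\sqrt{3\varepsilon}\,n$ from \eqref{eq:similarSizes} and using $1-\theta>0$ yields
\[d_2(v)<\Bigl(1+26\xi+\sqrt{3\varepsilon}-\tfrac{1-\theta}{\chi(F_2)-1}\Bigr)n.\]
Comparing this with the target $\bigl(1-\tfrac{1-3\theta}{\chi(F_2)-1}\bigr)(n-1)$ and rearranging, the desired inequality reduces to $\bigl(\tfrac{2\theta}{\chi(F_2)-1}-26\xi-\sqrt{3\varepsilon}\bigr)n\ge 1-\tfrac{1-3\theta}{\chi(F_2)-1}$.

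The only point needing care is this last step. Its right-hand side is a constant strictly less than $1$, so it suffices to know that $\tfrac{2\theta}{\chi(F_2)-1}-26\xi-\sqrt{3\varepsilon}$ is a fixed positive constant; the inequality then holds for all $n\ge n_0$ once $n_0$ is chosen large, which is permitted. This positivity is precisely what the parameter constraints were arranged for: $\chi(F_2)-1<\chi$; \eqref{eq:xi4} gives $26\xi<\theta/\chi$; and \eqref{eq:epsilon2} together with $f\ge v(F_2)\ge 3$ gives $\sqrt{3\varepsilon}<\theta/\chi$, so $26\xi+\sqrt{3\varepsilon}<\tfrac{2\theta}{\chi}<\tfrac{2\theta}{\chi(F_2)-1}$. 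In short, I do not expect a genuine obstacle here beyond Lemma~\ref{lem:noBigToAll}; the argument is bookkeeping, with the $\Theta(n)$-sized gap coming from these parameter inequalities absorbing the $O(1)$ additive error terms.
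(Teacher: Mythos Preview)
Your proof is correct and follows essentially the same approach as the paper: split on whether $v\in V_1$, and in the substantive case $v\notin V_1$ invoke Lemma~\ref{lem:noBigToAll} to find a part $A_{i^*}$ with few $G_2$-neighbours of $v$ in $A_{i^*}\cap V_2$, then bound $d_2(v)$ using $|V\setminus V_2|\le 26\xi n$, \eqref{eq:similarSizes}, \eqref{eq:xi4} and \eqref{eq:epsilon2}. The only cosmetic difference is that for $v\in V_1$ the paper appeals to Observation~\ref{obs:Vihigh} rather than directly to Definition~\ref{def:V1V2}, but both routes are immediate.
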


\begin{proof}
If $v\in V_1$, then, by Observation~\ref{obs:Vihigh},
\[d_2(v)=n-1-d_1(v)\leq n-\left(\frac{5}{4}\cdot \frac{1+\xi}{\chi(F_2)-1}\right)n <\left(1-\frac{1}{\chi(F_2)-1}\right)n< \left(1-\frac{1-3\theta}{\chi(F_2)-1}\right)(n-1).\]
On the other hand, if $v\notin V_1$, then, by Lemma~\ref{lem:noBigToAll}, we have that $v\notin B$. So, there exists $i$ such that $v$ has fewer than $\theta|A_i|$ neighbours in $A_i\cap V_2$. Since $|V\setminus V_2|\leq 26\xi n$ by Lemmas~\ref{lem:V1V2disjoint},~\ref{lem:V0small} and~\ref{lem:V1small}, we have
\[d_2(v)\leq \sum_{j\neq i}|A_j| + \theta\cdot |A_i| + |A_i\setminus V_2|\leq n -(1-\theta)|A_i| + 26\xi n\]
By \eqref{eq:similarSizes} this is at most
\[ n -(1-\theta)\left(\frac{1}{\chi(F_2)-1} - \sqrt{3\varepsilon}\right)n + 26\xi n \leq  \left(1-\frac{1-\theta}{\chi(F_2)-1} + \sqrt{3\varepsilon} + 26\xi\right)n.\]
Note that \eqref{eq:epsilon2} implies that $\varepsilon<\frac{\theta^2}{3(\chi(F_2)-1)^2}$. Using this bound, together with \eqref{eq:xi4}, tells us that the above expression is at most $\left(1-\frac{1-3\theta}{\chi(F_2)-1}\right)(n-1)o$ as desired.
\end{proof}

Next, we show that $d_2(v)$ is reasonably large for every vertex $v\in V$. This will then be used to show that $V_1=\emptyset$. To prove it, we assume the following:
\begin{equation}
\label{eq:epsilon3}
0<\varepsilon<\frac{1}{4f^2\chi^4}.
\end{equation}
Also, we assume that $t_0$ is chosen large enough that, for all $t\geq t_0$,
\begin{equation}
\label{eq:t0bound3}
e^{\theta\cdot t}\geq \frac{6\chi^{2f}(t+f)}{q}.
\end{equation}

\begin{lem}
\label{lem:noLargeRed}
For every $v\in V$,
\[d_2(v)\geq \left(1-\frac{1+15\theta f}{\chi(F_2)-1}\right)(n-1).\]
\end{lem}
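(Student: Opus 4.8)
The plan is to argue by contradiction: supposing some vertex $v$ has $d_2(v)$ below the claimed threshold — equivalently $d_1(v)>\frac{1+15\theta f}{\chi(F_2)-1}(n-1)$ (using $d_1(v)+d_2(v)=n-1$) — I will produce so many homomorphic copies of $H_1$ in $G_1$ through $v$ that $t(H_1,G_1)(v)$ exceeds the bound of Lemma~\ref{lem:contribsmall}. Note that this hypothesis holds automatically for every $v\in V_1$: then $d_1(v)\ge\bigl(1-\tfrac{1+2\xi}{\chi(F_1)-1}\bigr)(n-1)$, and $\tfrac{1+2\xi}{\chi(F_1)-1}+\tfrac{1+15\theta f}{\chi(F_2)-1}<1$ because $\chi(F_1)+\chi(F_2)\ge 7$ forces $\tfrac1{\chi(F_1)-1}+\tfrac1{\chi(F_2)-1}\le\tfrac56$ and $\xi,\theta$ are small. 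Thus the very same argument will, as a by-product, show $V_1=\emptyset$, so there is no need to treat $V_1$ separately.

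First I would extract a large near-clique inside the $G_1$-neighbourhood of $v$. Write $N:=N_{G_1}(v)$, so $|N|=d_1(v)$. Since $N$ is split among $A_1,\dots,A_{\chi(F_2)-1}$, pigeonhole gives an index $j^*$ with $|N\cap A_{j^*}|\ge\frac{d_1(v)}{\chi(F_2)-1}>\frac{(1+15\theta f)(n-1)}{(\chi(F_2)-1)^2}$; set $C:=N\cap A_{j^*}$. As $C\subseteq A_{j^*}$, Lemma~\ref{lem:roughStructure} bounds the number of $G_1$-non-edges inside $C$ by $e_2(A_{j^*})\le\varepsilon n^2$, which by \eqref{eq:similarSizes} and \eqref{eq:epsilon3} is a negligible fraction of $|C|^2$; discarding the at most $O(\chi\sqrt\varepsilon\,n)$ vertices of $A_{j^*}$ with atypically low $G_1$-degree inside $A_{j^*}$ yields $A_{j^*}^{*}\subseteq A_{j^*}$ and $C^{*}:=C\cap A_{j^*}^{*}$ with $|A_{j^*}^{*}|=(1-o(1))|A_{j^*}|$, $|C^{*}|=(1-o(1))|C|$, every vertex of $A_{j^*}^{*}$ having at least $(1-\sqrt\varepsilon)|A_{j^*}|$ $G_1$-neighbours inside $A_{j^*}$, and $G_1[A_{j^*}^{*}]$ of edge density $1-O(\chi^4\varepsilon)$.

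Now the embedding. Let $p(x)$ denote the number of pendant edges of $H_1$ incident to a vertex $x$ of $F_1$, and choose $x$ with $p:=p(x)$ maximum, so $p\ge t_1/v(F_1)\ge t_1/f$. I count homomorphisms $\varphi\colon V(H_1)\to V$ of $H_1$ into $G_1$ with $\varphi(x)=v$: map the other $v(F_1)-1$ vertices of $F_1$ into $A_{j^*}^{*}$ — a near-clique, so all but an $O(\chi^4\varepsilon)$-fraction of choices respect the edges of $F_1-x$ — subject to the $d_{F_1}(x)$ neighbours of $x$ in $F_1$ landing in $C^{*}$ (this makes the edges of $F_1$ at $x$ valid since $C^{*}\subseteq N$); map each of the $p$ pendants at $x$ to one of the $d_1(v)$ vertices of $N$; and map each of the remaining $t_1-p$ pendants, attached to some $y\neq x$ with $\varphi(y)\in A_{j^*}^{*}$, to one of its $\ge(1-\sqrt\varepsilon)|A_{j^*}|$ $G_1$-neighbours inside $A_{j^*}$. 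This gives, writing $d:=d_{F_1}(x)$,
\[
t(H_1,G_1)(v)\ \ge\ \frac{(1-o(1))\,|C^{*}|^{\,d}\,|A_{j^*}^{*}|^{\,v(F_1)-1-d}\,d_1(v)^{\,p}\,\bigl((1-\sqrt\varepsilon)|A_{j^*}|\bigr)^{\,t_1-p}}{n^{\,v(F_1)+t_1}}.
\]
Dividing by the target $\frac{3\max\{v(H_1),v(H_2)\}}{n}\bigl(\tfrac1{\chi(F_2)-1}\bigr)^{v(H_1)-k(F_1)}$ of Lemma~\ref{lem:contribsmall} and distributing the powers of $\chi(F_2)-1$: the $A_{j^*}^{*}$- and $A_{j^*}$-factors contribute $\bigl(1-O(\chi\sqrt\varepsilon)\bigr)^{O(t_1)}\ge e^{-O(\chi^2\sqrt\varepsilon)t_1}$, which by \eqref{eq:epsilon2} is at worst $e^{-\theta t_1}$; the $C^{*}$-factors contribute $\bigl(\tfrac1{2(\chi(F_2)-1)}\bigr)^{d}\ge\chi^{-O(f)}$; the leftover $(\chi(F_2)-1)^{1-k(F_1)}$ is $\chi^{-O(f)}$; and the $p$ factors attached to $d_1(v)$ contribute $\bigl((\chi(F_2)-1)\tfrac{d_1(v)}{n}\bigr)^{p}\ge(1+14\theta f)^{p}\ge(1+14\theta f)^{t_1/f}\ge e^{7\theta t_1}$. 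Hence the ratio is at least $\dfrac{e^{\,7\theta t_1-O(\chi^2\sqrt\varepsilon)t_1}}{O(\chi^{O(f)}\max\{v(H_1),v(H_2)\})}$, and since $\max\{v(H_1),v(H_2)\}\le v(H_1)/q\le 2t_1/q$ by \eqref{eq:relSizes}, this exceeds $1$ by \eqref{eq:t0bound3} and $t_1\ge t_0$, contradicting Lemma~\ref{lem:contribsmall}.

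The main obstacle is this final balancing act: the exponential \emph{gain} $e^{\Theta(\theta t_1)}$, which comes from the $\Theta(t_1/f)$ pendants at $x$ each being placeable on any of the $d_1(v)\ge(1+\Theta(\theta f))\tfrac n{\chi(F_2)-1}$ $G_1$-neighbours of $v$, must overcome (i) the polynomial loss $3\max\{v(H_1),v(H_2)\}=\Theta(t_1/q)$; (ii) the $\chi^{\Theta(f)}$ constant loss from forcing the $F_1$-neighbours of $x$ into the sparser set $C$ (of size only about $n/(\chi(F_2)-1)^2$ when $N$ is spread evenly across the parts) together with the $(\chi(F_2)-1)^{1-k(F_1)}$ factor; and (iii) the slowly decaying $e^{-\Theta(\sqrt\varepsilon)t_1}$ loss from the parts being marginally smaller than $\tfrac n{\chi(F_2)-1}$ and not perfectly complete. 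Items (i)–(ii) are what \eqref{eq:t0bound3} is engineered to absorb, and (iii) is neutralised because $\varepsilon$ is chosen after $\theta$, so $\sqrt\varepsilon\ll\theta$ by \eqref{eq:epsilon2}; getting the constants to line up (in particular for $\chi=3$) is the delicate part and may need the estimates of the second and third paragraphs to be run with a little more care than sketched here. I also expect that, in contrast to the companion Lemma~\ref{lem:noBigToAll}, no critical edge of $F_1$ is needed: since $G_1$ is approximately a disjoint union of cliques rather than a complete multipartite graph, the (possibly connected, arbitrarily chromatic) graph $F_1-x$ can be embedded into a single part, so no proper colouring of a vertex-deleted subgraph is required.
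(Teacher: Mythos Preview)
Your argument is correct and follows the paper's strategy closely: choose the vertex $x\in V(F_1)$ carrying the most pendants (so $p(x)\ge t_1/v(F_1)$), map $x$ to $v$, embed $F_1\setminus\{x\}$ into a single part $A_{j^*}$ of the rough partition (which is a near-clique in $G_1$ by Lemma~\ref{lem:roughStructure}), and verify that the exponential gain $(1+\Theta(\theta f))^{p(x)}\ge e^{\Theta(\theta t_1)}$ from the pendants at $x$ beats the polynomial loss $\Theta(t_1/q)$ and the $\chi^{O(f)}$ constant, contradicting Lemma~\ref{lem:contribsmall} via \eqref{eq:t0bound3}. Your observation that this automatically handles $v\in V_1$ is exactly how the paper deduces Lemma~\ref{lem:noV1}, and you are right that no critical edge of $F_1$ is needed here.

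The one genuine point of departure is how you handle the $t_1-p$ pendants attached to vertices $y\neq x$. The paper invokes Lemma~\ref{lem:noTinyRed} to obtain the global bound $d_1(w)\ge\frac{1-3\theta}{\chi(F_2)-1}(n-1)$ for every $w\in V$, and then lets each such pendant range over all of $N_{G_1}(\varphi(y))$; the resulting $(1+14\theta f)^{p}(1-3\theta)^{t_1-p}$ is what produces the clean exponent $e^{\theta t_1}$ matching \eqref{eq:t0bound3}. You instead keep these pendants inside $A_{j^*}$, relying only on the near-clique structure (after the cleaning step that produces $A_{j^*}^*$), which buys you independence from Lemma~\ref{lem:noTinyRed} at the cost of the extra discard-and-restrict step and a loss of $e^{-O(\chi\sqrt\varepsilon)t_1}$ that must be controlled via \eqref{eq:epsilon2}. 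Both routes work; the paper's is a little cleaner once Lemma~\ref{lem:noTinyRed} is available, while yours is self-contained.
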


\begin{proof}
Suppose that the lemma is false. Then there exists $v\in V$ such that
\[d_1(v)=n-1-d_2(v)> n-1-\left(1-\frac{1+15\theta f}{\chi(F_2)-1}\right)(n-1)\]
\[\geq \left(\frac{1+14\theta f}{\chi(F_2)-1}\right)n.\]
Our goal is to show that $v$ is contained in a large number of copies of $H_1$ in $G_1$ which will contradict Lemma~\ref{lem:contribsmall}. For each $w\in V(F_1)$, let $p(w)$ be the number of pendant edges incident to $w$ added in the construction of $H_1$ from $F_1$ and let $v_0\in V(F_1)$ so that $p(v_0)$ is maximum. Then, by the Pigeonhole Principle,
\begin{equation}\label{eq:p(v_0)}p(v_0)\geq \frac{v(H_1)-v(F_1)}{v(F_1)}.\end{equation} 
Let $F_1':=F_1\setminus \{v_0\}$. 

By the lower bound on $d_1(v)$ proven above, there must exist some $1\leq i\leq \chi(F_2)-1$ such that $v$ has at least
\[\frac{1}{\chi(F_2)-1}\cdot \left(\frac{1+14\theta f}{\chi(F_2)-1}\right)n \geq \frac{n}{(\chi(F_2)-1)^2}\]
neighbours in $A_i$. Let $S$ be the set of neighbours of $v$ in $A_i$. Recall that, by Lemma~\ref{lem:roughStructure}, the number of non-edges of $G_1$ in $S$ is at most
\[\varepsilon n^2 \leq \varepsilon n^2\left(\frac{|S|}{n/(\chi(F_2)-1)^2}\right)^2 = \varepsilon (\chi(F_2)-1)^4|S|^2.\]
Therefore, for large enough $n$,
\[t(K_2,G_1[S]) = \frac{2e_1(S)}{|S|^2} \geq \frac{2\binom{|S|}{2} - 2\varepsilon (\chi(F_2)-1)^4|S|^2}{|S|^2}\geq 1-2\varepsilon \chi(F_2)^4.\]
Thus, if $V(F_1')$ is mapped to $S$ randomly, then the probability that any individual edge of $F_1'$ is mapped to a non-edge of $G_1$ is at most $2\varepsilon\chi(F_2)^4$. So, by a union bound and \eqref{eq:epsilon3}, we have that
\begin{equation}\label{eq:unionBoundF1}t(F_1',G_1[S])\geq 1-2\varepsilon e(F_1')\chi(F_2)^4>1/2.\end{equation}
Now, if $\varphi$ is a random function from $V(H_1)$ to $V$, then the probability that $\varphi$ is a homomorphism mapping $v_0$ to $v$ is at least the probability that the restriction of $\varphi$ to $V(F_1')$ is a homomorphism from $F_1'$ to $G[S]$, times $1/n$ (the probability that $v_0$ maps to $v$) times the probability that every vertex of $V(H_1)\setminus V(F_1)$ is mapped to a $G_1$-neighbour of the image of its unique neighbour in $H_1$. So, by Lemma~\ref{lem:noTinyRed}, $t(H_1,G_1)(v)$ is at least
\[\left(|S|/n\right)^{v(F_1')}t(F_1',G_1[S])\cdot \frac{1}{n}\left(\frac{1+14\theta f}{\chi(F_2)-1}\right)^{p(v_0)}\left(\frac{1-3\theta}{\chi(F_2)-1}\right)^{v(H_1)-v(F_1)-p(v_0)}.\]
Using the fact that $|S|\geq n/(\chi(F_2)-1)^2 > n/\chi(F_2)^2$ and \eqref{eq:unionBoundF1}, we get that this is at least
\[\frac{1}{2\chi(F_2)^{2v(F_1)}}\frac{1}{n}\left(\frac{1}{\chi(F_2)-1}\right)^{v(H_1)-v(F_1)}(1+14\theta f)^{p(v_0)}(1-3\theta)^{v(H_1)-v(F_1)-p(v_0)}.\]
Using the inequalities $1+r\geq e^{r/2}$ and $1-r\geq e^{-2r}$, which are valid for all $r\in [0,1/2]$, we can bound the product of the last two factors as follows:
\[(1+14\theta f)^{p(v_0)}(1-3\theta)^{v(H_1)-v(F_1)-p(v_0)}\]
\[\geq \exp(7\theta f p(v_0) - 6\theta(v(H_1)-v(F_1)-p(v_0))).\]
By \eqref{eq:p(v_0)}, this is at least
\[\exp(7\theta(v(H_1)-v(F_1)) - 6\theta(v(H_1)-v(F_1)-p(v_0))) \geq e^{\theta t_1}.\]
So, by \eqref{eq:t0bound3} and the fact that $t_1\geq t_0$, we have that 
\[t(H_1,G_1)(v) > \frac{1}{n}\left(\frac{1}{\chi(F_2)-1}\right)^{v(H_1)-v(F_1)}\left(\frac{3\left(t_1+f\right)}{q}\right)\]
\[> \frac{3v(H_1)}{q\cdot n}\left(\frac{1}{\chi(F_2)-1}\right)^{v(H_1)-k(F_1)}\geq \frac{3\max\{v(H_1),v(H_2)\}}{n}\left(\frac{1}{\chi(F_2)-1}\right)^{v(H_1)-k(F_1)}\]
where the penultimate step used $v(F_1)>k(F_1)$ and the last step applied \eqref{eq:relSizes}. This contradicts Lemma~\ref{lem:contribsmall} and completes the proof.
\end{proof}

As a consequence of the previous lemma, we will show next that $V_1=\emptyset$. This also implies $B=\emptyset$ by virtue of Lemma~\ref{lem:noBigToAll}. For this, we assume
\begin{equation}\label{eq:theta}
0<\theta<\frac{1}{60 f}.\end{equation}

\begin{lem}
\label{lem:noV1}
We have $V_1=\emptyset$. Consequently, $B=\emptyset$.
\end{lem}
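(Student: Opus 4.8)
The plan is to derive a contradiction from the two competing estimates on $d_2(v)$ available for a hypothetical vertex $v\in V_1$. On one hand, by Definition~\ref{def:V1V2}, every $v\in V_1$ satisfies $d_1(v)\geq\left(1-\frac{1+2\xi}{\chi(F_1)-1}\right)(n-1)$, and since $d_1(v)+d_2(v)=n-1$ this rearranges to
\[d_2(v)\leq\frac{1+2\xi}{\chi(F_1)-1}(n-1).\]
On the other hand, Lemma~\ref{lem:noLargeRed} gives $d_2(v)\geq\left(1-\frac{1+15\theta f}{\chi(F_2)-1}\right)(n-1)$ for \emph{every} vertex $v$. Dividing both estimates by $n-1$ and combining them, the existence of a vertex in $V_1$ would force
\[1\leq\frac{1+2\xi}{\chi(F_1)-1}+\frac{1+15\theta f}{\chi(F_2)-1}.\]

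The next step is to show that, under the standing hypotheses $\chi(F_1),\chi(F_2)\geq3$ and $\chi(F_1)+\chi(F_2)\geq7$, the right-hand side is strictly less than $1$, which is the desired contradiction. These hypotheses guarantee that both $\chi(F_1)-1$ and $\chi(F_2)-1$ are at least $2$ and that at least one of them is at least $3$, so I would split into the two corresponding cases. If $\chi(F_1)-1\geq3$, the right-hand side is at most $\frac{1+2\xi}{3}+\frac{1+15\theta f}{2}$, whereas if $\chi(F_2)-1\geq3$ it is at most $\frac{1+2\xi}{2}+\frac{1+15\theta f}{3}$. Using $\xi<\tfrac{1}{39}$ from \eqref{eq:xi1} and $\theta f<\tfrac{1}{60}$ from \eqref{eq:theta}, so that $1+2\xi<\tfrac{41}{39}$ and $1+15\theta f<\tfrac54$, the first bound is at most $\tfrac{41}{117}+\tfrac58<1$ and the second is at most $\tfrac{41}{78}+\tfrac{5}{12}<1$. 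In either case we reach a contradiction, so $V_1=\emptyset$; and then $B=\emptyset$ follows at once from Lemma~\ref{lem:noBigToAll}, which asserts $B\subseteq V_1$.

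I do not expect any genuine obstacle here: all the real work lives in Lemma~\ref{lem:noLargeRed} (which forces every vertex to have a fairly large $G_2$-degree) and Lemma~\ref{lem:noBigToAll}, and the present step only packages these together through an elementary inequality. The one subtlety is that $F_1$ and $F_2$ appear asymmetrically in the two degree estimates, so one cannot invoke symmetry to assume which chromatic number is the larger one; the case split on which of $\chi(F_1)-1$, $\chi(F_2)-1$ exceeds $2$ is precisely what deals with this, and it is also where the full strength of $\chi(F_1)+\chi(F_2)\geq7$ (beyond just $\chi(F_1),\chi(F_2)\geq3$) is actually used.
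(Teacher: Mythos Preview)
Your proof is correct and follows essentially the same approach as the paper: both combine the lower bound on $d_2(v)$ from Lemma~\ref{lem:noLargeRed} with the upper bound on $d_2(v)$ coming from membership in $V_1$, and derive a contradiction using $\chi(F_1)+\chi(F_2)\geq 7$. The only cosmetic difference is that the paper routes the second bound through Observation~\ref{obs:Vihigh} (which already encapsulates the case split on which chromatic number is at least $4$), whereas you work directly from Definition~\ref{def:V1V2} and perform that case split by hand.
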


\begin{proof}
Assuming $v\in V_1$, Lemma~\ref{lem:noLargeRed} implies that 
\[d_1(v)=n-1-d_2(v)\leq \left(\frac{1+15\theta f}{\chi(F_2)-1}\right)(n-1)\]
which, by \eqref{eq:theta}, is less than 
\[\left(\frac{5}{4}\cdot \frac{1}{\chi(F_2)-1}\right)n.\] 
This contradicts Observation~\ref{obs:Vihigh}, and so $V_1$ must be empty. Lemma~\ref{lem:noBigToAll} then implies that $B$ is also empty.
\end{proof}

From here forward, we impose an additional assumption that $\sum_{i=1}^{\chi(F_2)-1}e_2(A_i)$ is minimum among all partitions $A_1,\dots,A_{\chi(F_2)-1}$ of $V$. This allows us to prove the next lemma, which is analogous to~\cite[Claim~3.11]{FoxWigderson23}. We assume that $\xi$ satisfies
\begin{equation}
\label{eq:xi5} 
0<\xi<\theta \left(\frac{1}{f-1}  - \sqrt{3\varepsilon}\right).
\end{equation}
Note that the expression on the right side of the rightmost inequality above is positive by \eqref{eq:epsilon3}, and so it is possible to choose $\xi$ in this way. We use the assumption on the choice of partition to show that, for each $i$, every vertex in $A_i$ has few $G_2$-neighbours in $A_i\cap V_2$.

\begin{lem}
\label{lem:LittleBlueInsideClaim}
For $1\leq i\leq \chi(F_2)-1$, every $v\in A_i$ is adjacent in $G_2$ to fewer than $\theta |A_i|$ vertices of $A_i\cap V_2$.
\end{lem}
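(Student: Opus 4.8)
The plan is to argue by contradiction, combining the minimality of $\sum_{\ell=1}^{\chi(F_2)-1}e_2(A_\ell)$ with the near-regularity of $G_2$-degrees and the fact, established in Lemma~\ref{lem:noV1}, that $B=\emptyset$. Suppose some $v\in A_i$ is adjacent in $G_2$ to at least $\theta|A_i|$ vertices of $A_i\cap V_2$. Since $V_1=\emptyset$ we have $V=V_0\cup V_2$, and $|V_0|<\xi n$ by Lemma~\ref{lem:V0small}. As $v\notin B$, there is an index $j$ such that $v$ has fewer than $\theta|A_j|$ $G_2$-neighbours in $A_j\cap V_2$; by the assumption on $v$ this index satisfies $j\neq i$, and moreover
\[d_2(v,A_j)\;<\;\theta|A_j|+|V_0|\;<\;\theta|A_j|+\xi n.\]

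The heart of the argument is to show that, by contrast, $v$ has almost all of $A_i$ in its $G_2$-neighbourhood. Using $d_2(v)=\sum_\ell d_2(v,A_\ell)$, the trivial bounds $d_2(v,A_c)\leq|A_c|$ for $c\notin\{i,j\}$, and the estimate on $d_2(v,A_j)$ above, one gets $d_2(v,A_i)\geq d_2(v)-\bigl(n-|A_i|-|A_j|\bigr)-d_2(v,A_j)$. Feeding in the lower bound $d_2(v)\geq\bigl(1-\tfrac{1+15\theta f}{\chi(F_2)-1}\bigr)(n-1)$ from Lemma~\ref{lem:noLargeRed} and the part-size bounds $|A_i|,|A_j|\geq\bigl(\tfrac{1}{\chi(F_2)-1}-\sqrt{3\varepsilon}\bigr)n$ from \eqref{eq:similarSizes} then yields $d_2(v,A_i)\geq|A_i|-E\,n$, where $E$ is of order $\tfrac{\theta f}{\chi(F_2)-1}+\sqrt{3\varepsilon}+\xi$.

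Finally, form the new partition obtained from $A_1,\dots,A_{\chi(F_2)-1}$ by moving $v$ out of $A_i$ and into $A_j$; this decreases $\sum_\ell e_2(A_\ell)$ by exactly $d_2(v,A_i)-d_2(v,A_j)$. Combining the lower bound on $d_2(v,A_i)$, the upper bound on $d_2(v,A_j)$, and $|A_j|\leq\bigl(\tfrac{1}{\chi(F_2)-1}+\sqrt{3\varepsilon}\bigr)n$ shows this decrease is at least $\bigl(\tfrac{1-(2+15f)\theta}{\chi(F_2)-1}-O(\sqrt{3\varepsilon}+\xi)\bigr)n-O(1)$, which is positive for large $n$ because \eqref{eq:theta} forces $(2+15f)\theta$ to be bounded away from $1$ while \eqref{eq:epsilon3} and \eqref{eq:xi5} make the remaining error terms negligible. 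This contradicts the minimality of $\sum_\ell e_2(A_\ell)$. I expect the main obstacle to be precisely this last bookkeeping step: one must check that the gain of roughly $|A_i|$ within-part $G_2$-edges removed from $A_i$ genuinely swamps the accumulated slack coming from the $15\theta f$ term in Lemma~\ref{lem:noLargeRed}, the $\pm\sqrt{3\varepsilon}n$ fluctuation in the part sizes, and the at most $\xi n$ vertices of $V_0$ — and this is exactly what the chain of constraints on $\theta$, $\varepsilon$ and $\xi$ is arranged to guarantee.
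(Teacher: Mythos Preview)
Your proof is correct and uses essentially the same ingredients as the paper's: the fact that $B=\emptyset$ gives an index $j\neq i$ with $d_2(v,A_j\cap V_2)<\theta|A_j|$, the degree bound of Lemma~\ref{lem:noLargeRed}, and the minimality of $\sum_\ell e_2(A_\ell)$ under swapping $v$ between classes. The only difference is the order: the paper first invokes minimality to force $d_2(v,A_i)\leq d_2(v,A_{i'})<2\theta|A_{i'}|$ and then derives an upper bound on $d_2(v)$ contradicting Lemma~\ref{lem:noLargeRed}, whereas you first invoke Lemma~\ref{lem:noLargeRed} to force $d_2(v,A_i)$ large and then contradict minimality via the swap --- these are logically equivalent routes through the same three facts.
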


\begin{proof}
Let $v\in A_i$. Then $v$ is not bad by Lemma~\ref{lem:noV1}, so there must exist an index $i'$ such that the number of $G_2$-neighbours of $v$ in $A_{i'}\cap V_2$ is at most $\theta|A_{i'}|$. If $i'=i$, then we are done; so, we assume that $i'\neq i$. Since $V=V_0\sqcup V_2$ by Lemmas~\ref{lem:V1V2disjoint} and~\ref{lem:noV1}, the number of $G_2$-neighbours of $v$ in $A_{i'}$ overall is at most $\theta|A_{i'}| + |A_{i'}\cap V_0|$ which, by Lemma~\ref{lem:V0small}, is at most
\[\theta|A_{i'}| + \xi n < 2\theta |A_{i'}| \]
where the last step applies \eqref{eq:similarSizes} and \eqref{eq:xi5}. Since $i\neq i'$, the vertex $v$ must have at most $2\theta|A_{i'}|$ neighbours in $A_i$ as well; otherwise, moving $v$ from $A_i$ to $A_{i'}$ would decrease $\sum_{i=1}^{\chi(F_2)-1}e_2(A_i)$, contradicting our choice of partition. Thus,
\[d_2(v) \leq\sum_{j\notin\{ i,i'\}}|A_j| + 4\theta|A_{i'}|.\]
By \eqref{eq:similarSizes} and \eqref{eq:theta}, this is at most
\[(\chi(F_2)-3 + 4\theta)\left(\frac{1}{\chi(F_2)-1} +\sqrt{3\varepsilon}\right)n\leq\left( 1-  \frac{2}{\chi(F_2)-1} + 4\theta+ \chi(F_2)\sqrt{3\varepsilon}\right)n.\]
Using \eqref{eq:epsilon2}, we can bound this above by
\[\left( 1-  \frac{2}{\chi(F_2)-1} + 5\theta\right)n < \left( 1-  \frac{2}{\chi(F_2)-1} + \frac{5\theta\chi(F_2)}{\chi(F_2)-1}\right)(n-1) \]
\[= \left( 1-  \frac{1 + (1-5\theta\chi(F_2))}{\chi(F_2)-1}\right)(n-1) \]
By \eqref{eq:theta}, we have $\theta\leq\frac{1}{15f+5\chi(F_2)}$. Plugging this into the above expression yields an upper bound of 
\[\left(1-\frac{1+15\theta f}{\chi(F_2)-1}\right)(n-1)\]
contradicting Lemma~\ref{lem:noLargeRed} and completing the proof of the claim. 
\end{proof}

Next, let us show that every vertex $v\in A_i$ has many neighbours in $A_j$ for $j\neq i$. 

\begin{lem}
\label{lem:LittleBlueInside}
For $1\leq i\neq j\leq \chi(F_2)-1$, every $v\in A_i$ is adjacent in $G_2$ to at least $\left(1-33\theta f\right)|A_j|$ vertices of $A_j$.
\end{lem}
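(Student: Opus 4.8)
The plan is to estimate the $G_2$-degree of a fixed vertex $v\in A_i$ class by class, isolating the contribution of $A_j$. Since $V_1=\emptyset$ by Lemma~\ref{lem:noV1} we have $V=V_0\sqcup V_2$, so Lemma~\ref{lem:LittleBlueInsideClaim} applied to $v$, together with $|A_i\cap V_0|\le|V_0|<\xi n$ from Lemma~\ref{lem:V0small}, gives $|N_{G_2}(v)\cap A_i|<\theta|A_i|+\xi n$. Bounding $|N_{G_2}(v)\cap A_k|\le|A_k|$ trivially for $k\notin\{i,j\}$ and using $\sum_{k\ne i,j}|A_k|=n-|A_i|-|A_j|$, I would deduce
\[
|N_{G_2}(v)\cap A_j|>d_2(v)-n+(1-\theta)|A_i|+|A_j|-\xi n.
\]

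Into this I would feed the lower bound $d_2(v)\ge\left(1-\frac{1+15\theta f}{\chi(F_2)-1}\right)(n-1)$ from Lemma~\ref{lem:noLargeRed} and the bound $|A_i|\ge m-\sqrt{3\varepsilon}\,n$ from \eqref{eq:similarSizes}, where $m:=\frac{n}{\chi(F_2)-1}$. After the $n$- and $m$-terms collapse this leaves
\[
|N_{G_2}(v)\cap A_j|>|A_j|-\theta(1+15f)m-\sqrt{3\varepsilon}\,n-\xi n-1.
\]
One of the bounds on $\varepsilon$ (for instance \eqref{eq:epsilon2}) ensures $\sqrt{3\varepsilon}\,n\le m/2$, whence $|A_j|\ge m/2$ by \eqref{eq:similarSizes} and so $m\le 2|A_j|$; combining this with $1+15f\le16f$ (valid since $f\ge3$, as $F_1,F_2$ are non-bipartite) gives $\theta(1+15f)m\le 32\theta f|A_j|$ and hence
\[
|N_{G_2}(v)\cap A_j|>(1-32\theta f)|A_j|-\sqrt{3\varepsilon}\,n-\xi n-1.
\]

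It then remains to absorb the lower-order terms $\sqrt{3\varepsilon}\,n+\xi n+1$ into the slack $\theta f|A_j|$ between $(1-32\theta f)|A_j|$ and the target $(1-33\theta f)|A_j|$. Since $|A_j|\ge m/2\ge n/(2\chi)$, it suffices to verify $\frac{\theta f n}{2\chi}\ge\sqrt{3\varepsilon}\,n+\xi n+1$, and this is exactly where the parameter hierarchy comes in: \eqref{eq:epsilon2} gives $\sqrt{3\varepsilon}<\frac{\theta}{\chi f}\le\frac{\theta f}{6\chi}$ (using $f\ge3$), \eqref{eq:xi4} gives $\xi<\frac{\theta}{26\chi}\le\frac{\theta f}{6\chi}$, and $1\le\frac{\theta f n}{6\chi}$ for $n$ large, so the three contributions together are at most $\frac{\theta f n}{2\chi}$. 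I expect no real obstacle here: the proof is a bookkeeping computation, and the only subtlety is that the lower-order terms must be killed using the $\theta$-dependent bounds \eqref{eq:epsilon2} and \eqref{eq:xi4}, not the $\theta$-free bounds on $\varepsilon$ and $\xi$.
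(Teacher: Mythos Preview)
Your proposal is correct and follows essentially the same approach as the paper: both arguments isolate the contribution of $A_j$ to $d_2(v)$ by bounding $|N_{G_2}(v)\cap A_i|$ via Lemma~\ref{lem:LittleBlueInsideClaim} and Lemma~\ref{lem:V0small}, bounding the remaining classes trivially, and then invoking the degree lower bound of Lemma~\ref{lem:noLargeRed} together with \eqref{eq:similarSizes}, \eqref{eq:epsilon2} and \eqref{eq:xi4} to absorb the error terms. The only cosmetic difference is that the paper runs the computation as a contradiction (assuming too few neighbours in $A_j$ and deriving an upper bound on $d_2(v)$ that violates Lemma~\ref{lem:noLargeRed}), whereas you run it directly.
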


\begin{proof}
Let $v\in A_i$. Suppose that $v$ has fewer than $(1-33\theta f)|A_j|$ neighbours in $A_j$ in $G_2$. Let $S$ be the $G_2$-neighbourhood of $v$. Then
\[d_2(v)=\sum_{\ell=1}^{\chi(F_2)-1}|A_\ell\cap S| \leq|S\cap A_i\cap V_2| + |S\cap A_j|+|V_0| + \sum_{\ell\notin\{i,j\}}|A_\ell\cap S|\]
which, by Lemmas~\ref{lem:V0small} and \ref{lem:LittleBlueInsideClaim}, is at most
\[ \theta|A_i| + (1-33\theta f)|A_j| + \xi n+ n-|A_i|-|A_j|\]
\[= \left(1+\xi-\frac{(1-\theta)|A_i|+33\theta f|A_j|}{n}\right)n.\]
Using the lower bound on $|A_i|$ and $|A_j|$ in \eqref{eq:similarSizes} yields an upper bound of
\[\left(1+\xi-\left(1-\theta+33\theta f\right)\left(\frac{1}{\chi(F_2)-1} - \sqrt{3\varepsilon}\right)\right)n\]
\[\leq \left(1-\frac{1}{\chi(F_2)-1}+\xi + \sqrt{3\varepsilon} -32\theta f\left(\frac{1}{\chi(F_2)-1} - \sqrt{3\varepsilon}\right)\right)n.\]
By \eqref{eq:epsilon1}, \eqref{eq:epsilon2} and \eqref{eq:xi5}, this is less than
\[\left(1-\frac{1+15\theta f}{\chi(F_2)-1}\right)(n-1)\]
which contradicts Lemma~\ref{lem:noLargeRed} and completes the proof.
\end{proof}

Next, we prove that, in fact, there are no edges within $G_2[A_i]$ for any $1\leq i\leq \chi(F_2)-1$. The presence of critical edges in $F_1$ and $F_2$ is crucial in this step. After this, the proof of Theorem~\ref{th:hairy} will follow relatively easily. The following lemma is analogous to~\cite[Claim~3.12]{FoxWigderson23}. Assume that $t_0$ is large enough that the following holds for all $t\geq t_0$:
\begin{equation}
\label{eq:t0bound4}
\left(\frac{10}{9}\right)^{qt} > 4\chi\left(\frac{4\chi}{3}\right)^{ f}(t+  f)^2.
\end{equation}
Also, choose $\theta$ small enough so that
\begin{equation}
\label{eq:theta2}
0<\theta<\frac{1}{66f^3}.
\end{equation}
and
\begin{equation}
\label{eq:theta3}
(1-20\cdot\theta f)\geq (5/6)\cdot (1+15\theta f)^{1/q}.
\end{equation}
Note that such a $\theta$ exists because the limit as $\theta\to0$ of the left side is $1$ and the limit of the right size is $5/6$.

\begin{lem}
\label{lem:noBlueInside}
$e_2(A_i)=0$ for $1\leq i\leq \chi(F_2)-1$. 
\end{lem}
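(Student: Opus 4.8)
The plan is to argue by contradiction: suppose $e_2(A_i)>0$ for some $i$, so there is an edge $uw$ of $G_2$ with $u,w\in A_i$. The strategy is to show that at least one of $u,w$ lies in many copies of $H_2$ in $G_2$, contradicting Lemma~\ref{lem:contribsmall} — exactly as in the corresponding step of Fox and Wigderson. The key structural input is the critical edge $v_0z$ of $F_2$: let $F_2'=F_2-v_0$, which is $(\chi(F_2)-2)$-colourable, and fix a proper colouring $\psi:V(F_2')\to\{1,\dots,\chi(F_2)-1\}$ that avoids the colour class $i$ on, say, all but the vertex $z$; more precisely, arrange the colouring so that $z$ receives colour $i$ while every other vertex of $F_2'$ receives a colour different from $i$. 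Then embed $F_2$ into $G_2$ by sending $v_0\mapsto u$, $z\mapsto w$, every other neighbour of $v_0$ in $F_2$ into the common $G_2$-neighbourhood of $u$ within the appropriate part $A_{\psi(\cdot)}$ (which is large because $u\notin V_1$ so by Lemma~\ref{lem:noLargeRed}/\ref{lem:noV1} most of $u$'s non-neighbours in $G_1$-sense, i.e.\ $G_2$-neighbours, land correctly, and by Lemma~\ref{lem:LittleBlueInside} $u$ is adjacent in $G_2$ to almost all of each $A_j$ with $j\neq i$), and the remaining vertices of $F_2'$ into their prescribed parts, using Lemmas~\ref{lem:partitionProperties} and~\ref{lem:LittleBlueInside} to control non-edges via a union bound. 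Crucially, because $uw\in E(G_2)$, the edge $v_0z$ is realized even though both endpoints want to sit ``inside'' part $A_i$.

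Next I would extend each such embedding of $F_2$ to an embedding of $H_2$: each pendant vertex of $H_2$ attached at a vertex $x$ of $F_2$ with $\varphi(x)\in A_i$ (the worst case) has at least $\theta|A_i|\sim \frac{\theta}{\chi(F_2)-1}n$ choices using Lemma~\ref{lem:LittleBlueInsideClaim}'s complement — wait, more carefully, since every vertex of $G_2$ has $G_2$-degree at least $\left(1-\frac{1+15\theta f}{\chi(F_2)-1}\right)(n-1)$ by Lemma~\ref{lem:noLargeRed}, every pendant extension has at least roughly $\left(1-\frac{1+15\theta f}{\chi(F_2)-1}\right)n$ valid targets regardless of which part its attachment point lies in. Counting: with $v(F_2)$ vertices of $F_2$ each contributing a factor $\Theta\!\left(\frac{\theta}{\chi(F_2)}\right)$ to $t(F_2,\cdot)(u)$ or $(w)$ (and the single factor $\tfrac1n$ from pinning $v_0\mapsto u$), and $v(H_2)-v(F_2)=t_2$ pendant vertices each contributing $\left(1-\frac{1+15\theta f}{\chi(F_2)-1}\right)\geq\frac{1}{\chi(F_2)-1}\cdot(\text{something}>1)^{?}$ — here is where \eqref{eq:theta3} and \eqref{eq:t0bound4} enter: the product of pendant factors beats $\left(\frac{1}{\chi(F_2)-1}\right)^{t_2}$ by an exponential-in-$t_2$ factor of the form $(10/9)^{qt_2}$ or similar, which by \eqref{eq:t0bound4} dwarfs the polynomial losses $\left(\frac{4\chi}{3}\right)^f(t_2+f)^2$ and the $\tfrac1n$, yielding $t(H_2,G_2)(u)+t(H_2,G_2)(w) > \frac{6\max\{v(H_1),v(H_2)\}}{n}\left(\frac{1}{\chi(F_2)-1}\right)^{v(H_2)-k(F_2)}$, so one of the two exceeds half that, contradicting Lemma~\ref{lem:contribsmall}.

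The main obstacle — and the reason the critical edge of $F_2$ is genuinely needed here rather than just in the stability step — is arranging the colouring $\psi$ and the embedding so that the ``extra'' internal edge $uw$ is the \emph{only} internal-to-$A_i$ edge we must realize, while all other edges of $F_2$ go between distinct parts where $G_2$ is dense; this is exactly what $\chi(F_2-e)<\chi(F_2)$ buys us, since it lets us $(\chi(F_2)-1)$-colour $F_2'=F_2-v_0$ so that only $z$ (the other endpoint of the critical edge) shares colour $i$ with where $v_0$ is being sent. A secondary subtlety is that the pendant vertices attached at $v_0$ itself must be handled separately — they need $u$'s $G_2$-degree to be large, which holds since $V_1=\emptyset$ by Lemma~\ref{lem:noV1} and Lemma~\ref{lem:noLargeRed} applies to every vertex — and that the imbalance hypothesis \eqref{eq:relSizes} is what converts $v(H_2)$-based bounds into $\max\{v(H_1),v(H_2)\}$-based ones and lets the exponent $qt_2$ (rather than $t_2$) still suffice in \eqref{eq:theta3} and \eqref{eq:t0bound4}. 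Once $e_2(A_i)=0$ for all $i$, so that $G_2$ is exactly complete $(\chi(F_2)-1)$-partite with parts $A_1,\dots,A_{\chi(F_2)-1}$, the theorem follows by a short convexity argument: among complete multipartite graphs on $n$ vertices with a bounded number of parts, $t_{\inj}(H_2,\overline{\,\cdot\,})$ is minimized (and $t_{\inj}(H_1,\cdot)$ vanishes) precisely when the parts are balanced, i.e.\ $G_2$ is the Turán graph — modulo checking that adding any edge inside a part can only increase $m$, which is immediate since $G_1$ would then contain a copy of $F_1$-free... actually since $G_1=\overline{G_2}$ is a disjoint union of cliques, $t_{\inj}(H_1,G_1)=0$ as $H_1$ is connected and non-complete-ish, and balancing the parts is the standard Turán-type optimization.
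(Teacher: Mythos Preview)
Your approach has a genuine gap at the counting step. You attempt to show that $t(H_2,G_2)(u)$ exceeds the bound in Lemma~\ref{lem:contribsmall}, which is of order $\frac{1}{n}\left(\frac{1}{\chi(F_1)-1}\right)^{v(H_2)-k(F_2)}$. But your embedding pins $v_0\mapsto u$ \emph{and} forces $z$ to land on a $G_2$-neighbour of $u$ inside $A_i$. The only such neighbour you are guaranteed is $w$ itself: Lemma~\ref{lem:LittleBlueInsideClaim} gives an \emph{upper} bound of $\theta|A_i|$ on such neighbours, not a lower bound, and the hypothesis $e_2(A_i)>0$ supplies only a single internal edge. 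Hence the factor coming from $z$ is $\frac{1}{n}$, not $\Theta(\theta/\chi(F_2))$ as you write, and your lower bound on $t(H_2,G_2)(u)$ is only of order $\frac{1}{n^2}$. This is far too small to contradict Lemma~\ref{lem:contribsmall}. (The vertex-overload strategy worked in Lemma~\ref{lem:noBigToAll} precisely because a bad vertex has $\Theta(n)$ neighbours in every part; here you have no such abundance.)

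The paper's argument is different in kind: it flips the edge $uw$ from $G_2$ to $G_1$ and compares the resulting change in $m(H_1,H_2;\cdot)$, contradicting the minimality of $G_1$ rather than Lemma~\ref{lem:contribsmall}. Both the number of $H_1$-copies gained and the number of $H_2$-copies lost are of order $n^{-2}$ after normalisation, so they can be compared directly. Your critical-edge embedding is exactly the right tool for lower-bounding the loss side; what is missing is the matching upper bound on the gain side, namely that each new copy of $H_1$ must use the flipped edge, and then every remaining vertex has at most $\left(\frac{1+15\theta f}{\chi(F_2)-1}\right)n$ choices by Lemma~\ref{lem:noLargeRed}, yielding a gain bounded by $2e(H_1)(\chi(F_2)-1)(1+15\theta f)^{v(H_1)}n^{-2}\rho_1^{-1}$. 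Conditions \eqref{eq:theta3} and \eqref{eq:t0bound4} are calibrated to make the loss dominate this gain, not to beat a $\frac{1}{n}$ threshold.
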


\begin{proof}
Suppose that the lemma is not true; without loss of generality, the set $A_1$ contains an edge of $G_2$. Let $u_0$ and $v_0$ be the endpoints of such an edge. Let $G_2'$ be the graph obtained from $G_2$ by deleting the edge $u_0v_0$ and let $G_1'=\overline{G_2'}$. We estimate the number of copies of $H_1$ in $G_1$ that are ``gained'' and the number of copies of $H_2$ in $G_2$ that are ``lost'' when replacing $(G_1,G_2)$ by $(G_1',G_2')$ with a goal of contradicting the choice of $G_1$. 

We begin by bounding from above the number of injective homomorphisms of $H_1$ to $G_1'$ which are not homomorphisms from $H_1$ to $G_1$. Any such homomorphism can be described as follows. First, we pick an edge $e=wz$ of $H_1$ and map its endpoints to $u_0v_0$ (in one of two possible ways). Now, imagine that we list the vertices of $H_1$ so that $w$ and $z$ are listed first (in this order), followed by the other vertices of the component of $H_1$ containing $w$ and $z$, and then the vertices of another (arbitrary) component, and so on, so that each vertex in the list is either the first vertex of its component or has a neighbour which comes before it in the list, which we refer to as its ``parent.'' Then, in a homomorphism, each vertex in the list after $w$ and $z$ must be mapped to a $G_1$-neighbour of its parent (if it has one). Thus, since $k(H_1)=k(F_1)$ and each vertex has at most $\left(\frac{1+15\theta f}{\chi(F_2)-1}\right)(n-1)$ neighbours in $G_1$ by Lemma~\ref{lem:noLargeRed}, the number of such mappings is at most
\[2e(H_1)n^{k(F_1)-1}\left(\frac{1+15\theta f}{\chi(F_2)-1}\right)^{v(H_1)-2-(k(F_1)-1)}n^{v(H_1)-2-(k(F_1)-1)}\]
\[=2e(H_1)(\chi(F_2)-1)\left(1+15\theta f\right)^{v(H_1)-k(F_1)-1}\left(\frac{1}{\chi(F_2)-1}\right)^{v(H_1)-k(F_1)}n^{v(H_1)-2}.\]
Thus, by \eqref{eq:samesame},
\[\rho_1\left(t_{\inj}(H_1,G_1')-t_{\inj}(H_1,G_1)\right)\]
\[\leq 2e(H_1)(\chi(F_2)-1)\left(1+15\theta f\right)^{v(H_1)-k(F_1)-1}n^{-2} + O(n^{-3}).\]
\[\leq 2e(H_1)(\chi(F_2)-1)\left(1+15\theta f\right)^{v(H_1)}n^{-2} + O(n^{-3}).\]

Next, let us bound from below the number of injective homomorphisms of $H_2$ to $G'_2$ which are not homomorphisms from $H_2$ to $G_2$. Let $e_0=w_0z_0$ be a critical edge of $F_2$, let $F_2'=F_2\setminus \{e_0\}$ and let $\psi:V(F_2')\to \{1,\dots, \chi(F_2)-1\}$ be a proper colouring of $F_2'$ such that $\psi(w_0)=\psi(z_0)=1$. Now, suppose that $\varphi$ is a function that maps $w_0$ to $u_0$ and $z_0$ to $v_0$ and then map every other vertex of $F_2'$ to $V$ randomly. The probability that every other vertex $u$ of $F_2'$ is mapped by $\varphi$ to $A_{\psi(u)}$ is
\[\prod_{u\in V(F_2')\setminus\{w_0,z_0\}} \left(\frac{|A_{\psi(u)}|}{n}\right) \geq \left(\frac{1}{\chi(F_2)-1} - \sqrt{3\varepsilon}\right)^{v(F_2')-2}\geq \left(\frac{1}{\chi}\right)^{ f}.\]
by \eqref{eq:similarSizes} and \eqref{eq:epsilon1}. Given this, by Lemma~\ref{lem:LittleBlueInside}, the probability that every edge of $F_2'$ maps to an edge of $G_2$ is, by a union bound, at least
\[1- 33\theta e(F_2') f\geq 1/2\]
where the inequality is by \eqref{eq:theta2}. Finally, given these two events, if each vertex of $V(H_2)\setminus V(F_2)$ is mapped randomly to $V$, an application of Lemma~\ref{lem:noLargeRed} combined with the above inequalities tells us that the probability that the final function is a homomorphism is at least
\[\frac{1}{2}\left(\frac{1}{\chi}\right)^{ f}\left(1-\frac{1+16\theta f}{\chi(F_2)-1}\right)^{v(H_2)-v(F_2)}=\frac{1}{2}\left(\frac{1}{\chi}\right)^{ f}\left(\frac{\chi(F_2)-2-16\theta f}{\chi(F_2)-1}\right)^{v(H_2)-v(F_2)}.\]
The last factor can be bounded as follows:
\[\left(\frac{\chi(F_2)-2-16\theta f}{\chi(F_2)-1}\right)^{v(H_2)-v(F_2)}\]
\[=\frac{\left(\chi(F_1)-1\right)^{v(H_2)-v(F_2)}}{(\chi(F_1)-1)^{v(H_2)-v(F_2)}}\left(\frac{\chi(F_2)-2-16\theta f}{\chi(F_2)-1}\right)^{v(H_2)-v(F_2)}\]
\[=\left(\frac{1}{\chi(F_1)-1}\right)^{v(H_2)-v(F_2)}\left(\frac{(\chi(F_1)-1)(\chi(F_2)-2-16\theta f)}{\chi(F_2)-1}\right)^{v(H_2)-v(F_2)}\]
\[\geq\left(\frac{1}{\chi(F_1)-1}\right)^{v(H_2)-k(F_2)}\left(\frac{(\chi(F_1)-1)(\chi(F_2)-2-16\theta f)}{\chi(F_2)-1}\right)^{v(H_2)-v(F_2)}\] where in the inequality we used that $v(F_2) \geq k(F_2)$.
Now, since $\chi(F_1)\geq 3$ and $\chi(F_1)+\chi(F_2)\geq 7$, we have 
\[\frac{(\chi(F_1)-1)(\chi(F_2)-2-16\theta f)}{\chi(F_2)-1} \geq \frac{4}{3}(1-20\cdot \theta f).\]
Putting this all together and applying \eqref{eq:samesame}, we get that
\[\rho_2(t_{\inj}(H_2,G_2)-t_{\inj}(H_2,G_2'))\]
\[\geq \frac{1}{2}\left(\frac{1}{\chi}\right)^{ f}\left(\frac{4}{3}(1-20\cdot\theta f)\right)^{v(H_2)-v(F_2)}n^{-2} + O(n^{-3}) \geq \frac{1}{2}\left(\frac{3}{4\chi}\right)^{ f}\left(\frac{4}{3}(1-20\cdot\theta f)\right)^{v(H_2)}n^{-2} + O(n^{-3}) \]
which, by \eqref{eq:relSizes}, is at least
\[\frac{1}{2}\left(\frac{3}{4\chi}\right)^{ f}\left(\frac{4}{3}(1-20\cdot\theta f)\right)^{qv(H_1)}n^{-2} + O(n^{-3}).\]
Now, by \eqref{eq:theta3}, this is at least
\[\frac{1}{2}\left(\frac{3}{4\chi}\right)^{ f}\left(\frac{4}{3}\cdot \frac{5}{6}(1+15\theta f)^{1/q}\right)^{qv(H_1)}n^{-2} + O(n^{-3})\]
\[=\frac{1}{2}\left(\frac{3}{4\chi}\right)^{ f}\left(\frac{10}{9}\right)^{qv(H_1)}(1+15\theta f)^{v(H_1)}n^{-2} + O(n^{-3}).\]

Combining the upper bound that we have proven on $\rho_1\left(t_{\inj}(H_1,G_1')-t_{\inj}(H_1,G_1)\right)$ and the lower bound on $\rho_2(t_{\inj}(H_2,G_2)-t_{\inj}(H_2,G_2'))$, we get that 
\[n^2(m(H_1,H_2;G_1)-m(H_1,H_2;G_1'))\]
\[\geq \frac{1}{2}\left(\frac{3}{4\chi}\right)^{ f}\left(\frac{10}{9}\right)^{qv(H_1)}(1+15\theta f)^{v(H_1)} -2e(H_1)(\chi(F_2)-1)\left(1+15\theta f\right)^{v(H_1)} - O(n^{-1})\]
which is positive for large $n$ by \eqref{eq:t0bound4}. This contradicts the definition of $G_1$ and completes the proof. 
\end{proof}

Finally, we present the proof of Theorem~\ref{th:hairy}.

\begin{proof}[Proof of Theorem~\ref{th:hairy}] 
Given $F_1, F_2$ and $q$ satisfying the hypotheses of the theorem, we select our parameters in the following order, subject to the given conditions:
\begin{itemize}
\item choose $\theta$ to satisfy \eqref{eq:theta}, \eqref{eq:theta2} and \eqref{eq:theta3},
\item choose $\varepsilon$ so that \eqref{eq:epsilon1}, \eqref{eq:epsilon2} and \eqref{eq:epsilon3} hold,
\item choose $\delta$ as in \eqref{eq:delta},
\item choose $\beta$ as in \eqref{eq:beta}, 
\item choose $\xi$ so that \eqref{eq:xi1}, \eqref{eq:xi2}, \eqref{eq:xi3}, \eqref{eq:xi4} and \eqref{eq:xi5} all hold,
\item choose $\gamma$ as in \eqref{eq:gamma},
\item choose $\tau$ to satisfy \eqref{eq:taua}, \eqref{eq:taub} and \eqref{eq:tauc},
\item choose $t_0$ large enough so that \eqref{eq:t0bound1}, \eqref{eq:t0bound2}, \eqref{eq:t0bound3} and \eqref{eq:t0bound4} all hold.
\end{itemize}
Let $t_1,t_2\geq t_0$ and let $H_1$ be a $t_1$-hairy $F_1$ and $H_2$ be a $t_2$-hairy $F_2$. We may assume that $H_1$ and $H_2$ have no singleton components. Let $n_0$ be large with respect to $H_1$ and $H_2$ and the parameters chosen in the previous paragraph and let $G_1$ be a graph on $n$ vertices minimizing $m(H_1,H_2;G_1)$  and $G_2=\overline{G_1}$. Without loss of generality, $|V_1|\leq |V_2|$. As a result of our parameter choices, all of the statements in Sections~\ref{sec:hairy} and~\ref{sec:hairier} hold. In particular, Lemma~\ref{lem:noBlueInside} implies that there is a partition $A_1,\dots, A_{\chi(F_2)-1}$ of $V=V(G_1)$ such that $G_2$ contains no edge with endpoints in $A_i$ for $1\leq i\leq \chi(F_2)-1$, and \eqref{eq:similarSizes} guarantees that all of the sets of the partition have approximately the same size, $\frac{n}{\chi(F_2)-1}$.

We assert that $G_1$ has no edges between $A_i$ and $A_j$ for $i \neq j$. To prove this, suppose that such an edge exists in $G_1$. If we move this edge from $G_1$ to $G_2$, it would destroy at least one injective homomorphism from $H_1$ to $G_1$ (since $|A_i|>v(H_1)$ for large $n$ and $H_1$ has at least $t_1\geq1$ vertices of degree one). At the same time, this would not create any injective homomorphism from $H_2$ to $G_2$, since $G_2$ is still $(\chi(F_2)-1)$-partite after adding such an edge to it. This contradicts our choice of $G_1$. Therefore, $G_2$ is a complete $(\chi(F_2)-1)$-partite graph. In particular, $t(H_2,G_2)=0$ and $G_1$ is a disjoint union of $\chi(F_2)-1$ cliques.

Finally, we show that the cardinalities of any two sets $A_i$ and $A_j$ differ by at most one. 
Each homomorphism from $H_1$ to $G_1$ gives rise to a partition of $V(H_1)$ into at most $\chi(F_2)-1$ classes such that each partition class is a union of components of $H_1$ and all vertices of each class are mapped to the same component of $G_1$. We think of these partitions as being ``unlabelled'' in the sense that they contain information about which components of $H_1$ are mapped to the same component of $G_1$ but not about which component of $G_1$ they are mapped to. Given such a partition $\mathcal{P}=\{P_1,\dots,P_{\chi(F_2)-1}\}$ (where we allow some of the sets $P_j$ to be empty), we show that the number of injective homomorphisms of $H_1$ to $G_1$ giving rise to the partition $\mathcal{P}$ is minimized when the cardinalities any two of the sets $A_i$ and $A_j$ differ by at most one. For each $1\leq i\leq \chi(F_2)-1$, let us count the number of choices for the mapping of vertices in $P_i$ given the mapping of the vertices in $\bigcup_{j=1}^{i-1}P_j$. Let $T_i$ be the set of indices $t$ such that there does not exist $1\leq j\leq i-1$ such that the vertices of $P_j$ are mapped to $A_t$. Then the number of choices for the mapping of $P_i$ given that of $P_j$ for all $j<i$ is 
\begin{equation}\label{eq:ithclass}\sum_{t\in T_i}\frac{|A_t|!}{(|A_t|-|P_i|)!}\end{equation}
For an integer $c\geq2$, define $f_c:\mathbb{R}\to \mathbb{R}$ by $f_c(z)=z(z-1)\dots (z-c+1)$. Then $f_c$ has $c-1$ distinct (integer) roots in the interval $[0,c-1]$. The derivative $f_c'(z)$ is a polynomial of degree $c-1$ with $c-1$ real roots which interlace the roots of $f_c$; in particular, its roots are also contained in the interval $[0,c-1]$. By similar logic, the roots of the second derivative $f_c''$ are in $[0,c-1]$ as well. From this, we see that $f_c''$ is positive on $[c,\infty)$, and so $f_c$ is strictly convex on this set. Thus, by Jensen's Inequality, for any $i$ such that $|P_i|\geq 2$, the sum in \eqref{eq:ithclass} is uniquely minimized when the cardinalities of the sets $A_t$ for $t\in T_i$ are as similar as possible. Thus, the number of injective homomorphisms from $H_1$ to $G_1$ is minimized by taking $G_2$ to be a $(\chi(F_2)-1)$-partite Tur\'an graph. 
\end{proof}

\section{Beating the Tur\'an Colouring}
\label{sec:neg}

In this section, we show that if $(H_1,H_2)$ is a bonbon pair, then $e(H_1)$ cannot be ``excessively large;'' see Theorem~\ref{th:imbalance} below. This result will then be used to derive Proposition~\ref{prop:imbalance}. We will use the following result of~\cite{FoxHeManners19} which was previously known as Tomescu's Graph Colouring Conjecture~\cite{Tomescu71}.

\begin{thm}[Fox, He and Manners~\cite{FoxHeManners19}]
\label{th:Tomescu}
For $m\neq 3$, every connected $m$-chromatic graph on $n$ vertices has at most $m!(m-1)^{n-m}$ proper $m$-colourings\footnote{For clarity, given the previous sections, the colourings here are for vertices, not edges.}.
\end{thm}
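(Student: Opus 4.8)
\noindent\emph{Proof strategy.} The bound is sharp: taking $K_m$ and attaching $n-m$ further vertices one at a time, each joined to exactly one earlier vertex, produces a connected $m$-chromatic graph with exactly $m!(m-1)^{n-m}$ proper $m$-colourings ($m!$ ways on $K_m$, then $m-1$ independent choices at each new vertex). The plan is to prove the inequality by strong induction on $n$. The cases $m=1,2$ are trivial and $m=3$ is genuinely excluded (see below), so assume $m\ge 4$; for the base case $n=m$ the only connected $m$-chromatic graph on $m$ vertices is $K_m$, which has exactly $m!=m!(m-1)^{0}$ proper colourings.

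\noindent\emph{Reduction to critical graphs.} Suppose first that $G$ is \emph{not} vertex-critical, i.e.\ there is a vertex $v$ with $\chi(G-v)=m$; this covers, in particular, every $G$ with a vertex of degree at most $m-2$ or with a cut-vertex, since vertex-critical graphs have minimum degree at least $m-1$ and, for $m\ge 3$, are $2$-connected. Fix such a $v$, pick a component $C$ of $G-v$ with $\chi(C)=m$ (one exists since $\chi(G-v)$ is the maximum of the chromatic numbers of its components), and let $C_1,\dots,C_k$ be the other components. Count the proper $m$-colourings of $G$ in the order: colour $C$ (at most $m!(m-1)^{|C|-m}$ ways, by the inductive hypothesis, as $C$ is connected, $m$-chromatic and has fewer than $n$ vertices); then colour $v$ (at most $m-1$ ways, since $G$ is connected so $v$ has a neighbour in $C$, forbidding a colour); then, for each $i$, extend the colouring across $C_i$ along a spanning tree of $G[V(C_i)\cup\{v\}]$ rooted at $v$ (at most $(m-1)^{|C_i|}$ ways, one free-minus-one choice at each of the $|C_i|$ non-root vertices). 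Since the choices on distinct $C_i$ are independent and $|C|+\sum_i|C_i|=n-1$, multiplying gives exactly $m!(m-1)^{n-m}$. This step uses nothing about $m\ne 3$.

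\noindent\emph{The critical case --- the crux.} It remains to treat vertex-critical $m$-chromatic graphs $G$, which are $2$-connected with $\delta(G)\ge m-1$; for $G=K_m$ this is the base case (with equality), so the real task is to show that every \emph{other} $m$-critical graph has \emph{strictly} fewer than $m!(m-1)^{n-m}$ proper $m$-colourings. This is exactly where $m\ne 3$ is essential: the $3$-critical graphs are precisely the odd cycles, and for odd $n\ge 5$ the cycle $C_n$ has $2^{n}-2>\tfrac34\cdot 2^{n}=3!\,2^{\,n-3}$ proper $3$-colourings, so the statement is false for $m=3$ (and trees hung off an odd cycle give infinitely many counterexamples). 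For $m\ge 4$ one needs a genuinely different argument --- for instance, fixing a vertex $v$ of degree exactly $m-1$ and analysing the distribution of $|c(N(v))|$ as $c$ ranges over the colourings of $G-v$ (so as to ``complete $N(v)$ to a clique'' while controlling the loss), a Hajós-type structural induction on critical graphs, or a convexity/entropy comparison against the extremal configuration. I expect this critical-graph case to be by far the main obstacle, demanding both a fine understanding of the structure of proper $m$-colourings of critical graphs and tight control of the constant $m!(m-1)^{-m}$; I would not anticipate a short self-contained proof, and indeed this case is the substance of the Fox--He--Manners theorem.
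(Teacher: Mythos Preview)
The paper does not prove this theorem at all: it is quoted as an external result of Fox, He and Manners and used purely as a black box in the proof of Corollary~\ref{cor:nearlyProper}. So there is no ``paper's own proof'' to compare against.

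As for your proposal itself, your reduction to vertex-critical graphs is correct and standard (and indeed your counting along a spanning tree shows the non-critical case cleanly). You have also correctly identified that the hypothesis $m\neq 3$ enters only in the critical case, via the odd cycles. But you do not actually prove the critical case: you list a few possible lines of attack and then concede that this ``is the substance of the Fox--He--Manners theorem.'' That is an honest assessment --- the critical case is genuinely deep and was open for decades --- but it means your proposal is not a proof; it is a reduction to the known hard core of the problem together with an acknowledgment that you are not supplying that core. For the purposes of this paper that is fine, since the theorem is only being cited, not reproved.
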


The case $m=2$ of the above theorem is trivial, as every connected bipartite graph has precisely two proper $2$-colourings. It is also necessary to exclude the case $m=3$, as an odd cycle of length $k\geq5$ has more than $3!2^{k-3}$ proper $3$-colourings. Knox and Mohar~\cite{KnoxMohar19,KnoxMohar20} established the cases $m=4$ and $m=5$ before the full conjecture was proven by Fox, He and Manners~\cite{FoxHeManners19}. Note that every hairy $K_m$ on $n$ vertices has exactly $m!(m-1)^{n-m}$ proper $m$-colourings and so Theorem~\ref{th:Tomescu} is tight. We will use the following corollary of Theorem~\ref{th:Tomescu}. Given a graph $H$, say that a vertex colouring $f:V(H)\to [\chi(H)-1]$ of $H$ is \emph{nearly proper} if there is a unique edge of $H$ whose endpoints are monochromatic.

\begin{cor}
\label{cor:nearlyProper}
If $H$ is a graph such that $\chi(H)\neq 4$, then the number of nearly proper colourings of $H$ is at most
\[\crit(H)\cdot(\chi(H)-2)!\cdot(\chi(H)-2)^{v(H)-\chi(H)-k(H)+1}\cdot(\chi(H)-1)^{k(H)}.\]
\end{cor}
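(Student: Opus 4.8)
Throughout, write $m:=\chi(H)$; by hypothesis $m\ne 4$. The plan is to exhibit an injection from the set of nearly proper colourings of $H$ into the disjoint union, over the critical edges $e$ of $H$, of the sets of proper $(m-1)$-colourings of the contractions $H/e$, and then to bound the number of proper $(m-1)$-colourings of each $H/e$ by applying Theorem~\ref{th:Tomescu} to its connected components.

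For the injection: suppose $f\colon V(H)\to[m-1]$ is nearly proper with unique monochromatic edge $e=uv$. Since $f$ is proper on $H-e$ and $f(u)=f(v)$, it induces a well-defined \emph{proper} $(m-1)$-colouring $\bar f$ of $H/e$ (give the contracted vertex the colour $f(u)$ and keep all other colours). As $f$ is a proper $(m-1)$-colouring of $H-e$, we have $\chi(H-e)\le m-1<m$, so $e$ is critical; since deleting an edge lowers the chromatic number by at most one, in fact $\chi(H-e)=m-1$, and then $\chi(H/e)=m-1$ too, because any proper $(m-2)$-colouring of $H/e$ would pull back to a proper $(m-2)$-colouring of $H-e$. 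Finally, $f$ is recovered from the pair $(e,\bar f)$ by reversing the substitution, so $f\mapsto(e,\bar f)$ is injective; hence the number of nearly proper colourings of $H$ is at most $\sum_{e}P(H/e,m-1)$, the sum over critical edges $e$, where $P(\cdot,\cdot)$ denotes the number of proper colourings.

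It then remains to bound $P(H/e,m-1)$ for a fixed critical edge $e$. Put $H':=H/e$, so that $v(H')=v(H)-1$, $k(H')=k(H)$, and $\chi(H')=m-1$ by the previous step. Decomposing $H'$ into its connected components $C_1,\dots,C_{k(H)}$ gives $P(H',m-1)=\prod_i P(C_i,m-1)$. Let $p\ge 1$ be the number of components with $\chi(C_i)=m-1$. To each such component I apply Theorem~\ref{th:Tomescu} with parameter $m-1\ne 3$ (this is the only place the hypothesis $\chi(H)\ne 4$ is used; the cases $m-1\in\{1,2\}$ of that theorem are trivial), giving $P(C_i,m-1)\le (m-1)!\,(m-2)^{v(C_i)-(m-1)}$; to every remaining component I apply the crude bound $P(C_i,m-1)\le (m-1)(m-2)^{v(C_i)-1}$, valid for any connected graph by rooting a spanning tree and colouring greedily. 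Multiplying these estimates and simplifying the exponents, the desired inequality reduces to $\bigl((m-2)!/(m-2)^{m-2}\bigr)^{p}\le (m-2)!/(m-2)^{m-2}$, which holds since $(m-2)!/(m-2)^{m-2}\le 1$ and $p\ge 1$. Summing over the $\crit(H)$ critical edges of $H$ then completes the proof.

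The one substantive step is the first: verifying that the unique monochromatic edge of a nearly proper colouring is critical and that the corresponding contraction has chromatic number exactly $m-1$, as this is precisely what makes Theorem~\ref{th:Tomescu} applicable with the correct parameters. Everything afterwards is routine manipulation of chromatic polynomials over the components of $H/e$.
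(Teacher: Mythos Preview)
Your proof is correct and follows essentially the same approach as the paper: set up a bijection (you use only the injection direction) between nearly proper colourings and pairs consisting of a critical edge $e$ and a proper $(m-1)$-colouring of $H/e$, then bound the latter componentwise using Theorem~\ref{th:Tomescu} on the $(m-1)$-chromatic component(s) and the greedy spanning-tree bound on the rest. The only cosmetic difference is that the paper applies Tomescu just to the single component containing the contracted vertex, whereas you allow $p\geq 1$ such components and then observe that the resulting bound is maximised at $p=1$; this extra generality is harmless and collapses to the paper's estimate.
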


\begin{proof}
Given an edge $e$ of $H$, let $H/e$ be the graph obtained by \emph{contracting $e$}; i.e. by identifying the two endpoints of $e$ and removing any multi-edges that arise. Let $z_e$ be the vertex formed by contracting the edge $e$. The number of nearly proper colourings of $H$ is equal to the number of ways to select 
\begin{itemize}
    \item a critical edge $e$ of $H$,
    \item a proper $(\chi(H)-1)$-colouring of the component of $H/e$ containing $z_e$ and
    \item a proper $(\chi(H)-1)$-colouring of the components of $H/e$ that do not contain $z_e$. 
\end{itemize}
The number of choices in the first step is clearly $\crit(H)$. 

Assuming that a critical edge $e$ has been chosen, let $p$ denote the number of vertices in the component of $H/e$ containing $z_e$. Note that the chromatic number of this component is exactly $\chi(H)-1$ which, since $\chi(H)\neq 4$, is not equal to three. So, by Theorem~\ref{th:Tomescu}, the number of choices in the second step is at most $(\chi(H)-1)!(\chi(H)-2)^{p-(\chi(H)-1)}$.

In the last step, for each component that does not contain $z_e$, there are at most $\chi(H)-1$ choices for the colour of an arbitrary ``root'' vertex of this component and then at most $\chi(H)-2$ choices for each subsequent vertex. Since $H/e$ has $v(H)-1$ vertices, the number of vertices in the components of $H/e$ that do not contain $z_e$ is $v(H)-1 -p$. Thus, the number of choices in the last step is at most
\[(\chi(H)-1)^{k(H)-1}(\chi(H)-2)^{v(H)-1-p-(k(H)-1)}.\]
Putting this all together, we get that the number of nearly proper colourings of $H$ is at most
\[\crit(H)\cdot (\chi(H)-1)!\cdot (\chi(H)-2)^{v(H)-\chi(H)-k(H)+1}\cdot (\chi(H)-1)^{k(H)-1}\]
as desired.
\end{proof}

We also need the following simple bound on the number of nearly proper colourings in the case that $\chi(H)=4$. The proof is analogous to that of the previous corollary, except that, instead of Theorem~\ref{th:Tomescu}, we use the (trivial) fact that every connected $3$-chromatic graph on $n$ vertices has at most $3\cdot 2^{n-1}$ proper $3$-colourings. 

\begin{lem}
\label{lem:trivialchi4}
If $H$ is a $4$-chromatic graph, then the number of nearly proper colourings of $H$ is at most
\[\crit(H)\cdot 3^{k(H)}\cdot 2^{v(H)-k(H)-1}.\]
\end{lem}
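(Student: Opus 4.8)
The plan is to run exactly the argument used for Corollary~\ref{cor:nearlyProper}, but with Theorem~\ref{th:Tomescu} replaced by the elementary observation that \emph{every} connected graph on $n$ vertices has at most $3\cdot 2^{n-1}$ proper $3$-colourings — and in particular every connected $3$-chromatic graph does. (To see this, fix a spanning tree, colour a root vertex in one of $3$ ways, and then colour the remaining $n-1$ vertices one at a time in an order in which each has a previously-coloured neighbour, giving at most $2$ choices apiece.) The virtue of this bound over Theorem~\ref{th:Tomescu} is that it is valid — albeit not tight — for $m=3$, which is exactly the chromatic number that arises when $H$ is $4$-chromatic and we delete or contract an edge.

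So first I would reproduce the bijective bookkeeping from the proof of Corollary~\ref{cor:nearlyProper}: a nearly proper colouring of $H$ with colour set $[\chi(H)-1]=[3]$ is proper on $H-e$, where $e$ is its unique monochromatic edge, and descends to a proper $3$-colouring of $H/e$; conversely every proper $3$-colouring of $H/e$ lifts to a nearly proper colouring of $H$ with monochromatic edge $e$. In particular the edge $e$ is forced to be critical, since $H/e$ being properly $3$-colourable gives $\chi(H-e)\le 3<\chi(H)$. Hence the number of nearly proper colourings of $H$ equals the number of ways to choose a critical edge $e$ (there are $\crit(H)$ of these) together with a proper $3$-colouring of $H/e$.

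Having fixed $e$, let $z_e$ be the contracted vertex and let $p$ be the number of vertices in the component of $H/e$ containing $z_e$; this component is $3$-chromatic (as $e$ is critical), so it has at most $3\cdot 2^{p-1}$ proper $3$-colourings. Contraction preserves the number of components, so $H/e$ has $k(H)$ components, of which $k(H)-1$ avoid $z_e$; these contain $v(H)-1-p$ vertices in total, so by the elementary bound applied componentwise they admit at most $3^{k(H)-1}\cdot 2^{(v(H)-1-p)-(k(H)-1)}$ proper $3$-colourings together. Multiplying the three counts gives
\[
\crit(H)\cdot 3\cdot 2^{p-1}\cdot 3^{k(H)-1}\cdot 2^{v(H)-k(H)-p}=\crit(H)\cdot 3^{k(H)}\cdot 2^{v(H)-k(H)-1},
\]
which is the claimed bound.

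There is no real obstacle here: the statement is, as the text indicates, a repackaging of the proof of Corollary~\ref{cor:nearlyProper} with a weaker colouring estimate. The only point needing a modicum of care is the correspondence in the previous paragraph — checking that a colouring in $[\chi(H)-1]$ with a unique monochromatic edge is genuinely proper after removing that edge and genuinely descends to the contraction, and that the monochromatic edge is therefore necessarily critical — together with the routine exponent arithmetic and the observation that contracting an edge does not change the component count.
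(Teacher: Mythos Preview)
Your argument is correct and is exactly the approach the paper indicates: you rerun the proof of Corollary~\ref{cor:nearlyProper} with Theorem~\ref{th:Tomescu} replaced by the trivial spanning-tree bound $3\cdot 2^{n-1}$ on proper $3$-colourings of a connected $n$-vertex graph. The only cosmetic point is that your parenthetical ``this component is $3$-chromatic (as $e$ is critical)'' is not actually needed, since, as you yourself note at the outset, the bound $3\cdot 2^{n-1}$ holds for \emph{every} connected graph regardless of chromatic number; the rest of the bookkeeping and arithmetic is spot on.
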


Next, we use Corollary~\ref{cor:nearlyProper} and Lemma~\ref{lem:trivialchi4} to prove the following result which restricts the number of edges in a graph contained in a bonbon pair. In fact, it applies to a slightly more general class of graphs. Say that $(H_1,H_2)$ is a \emph{multiplicity good} if
\[(\chi(H_2)-1)^{v(H_1)-k(H_1)}t(H_1,G) + (\chi(H_1)-1)^{v(H_2)-k(H_2)}t(H_2,\overline {G})\geq 1-o(1)\] for all graphs $G$.
Clearly, every bonbon pair is multiplicity good. Say that a graph $H$ is a \emph{multiplicity good} if the pair $(H,H)$ is.

\begin{thm}
\label{th:imbalance}
Let $H_1$ and $H_2$ be graphs such that, if $\chi(H_2)\neq 4$, then
\[e(H_1)>\frac{\crit(H_2)\cdot (\chi(H_2)-2)!\cdot(\chi(H_2)-2)^{v(H_2)-\chi(H_2)-k(H_2)+1}\cdot(\chi(H_1)-1)^{v(H_2)-k(H_2)}}{(\chi(H_2)-1)^{v(H_2)-k(H_2)}}\]
and, otherwise, 
\[e(H_1)>\frac{\crit(H_2)\cdot 2^{v(H_2)-k(H_2)-1}(\chi(H_1)-1)^{v(H_2)-k(H_2)}}{3^{v(H_2)-k(H_2)}}.\]
Then $(H_1,H_2)$ is not multiplicity good. 
\end{thm}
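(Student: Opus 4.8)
Write $\chi_i:=\chi(H_i)$, $v_i:=v(H_i)$, $k_i:=k(H_i)$, $\rho_1:=(\chi_2-1)^{v_1-k_1}$ and $\rho_2:=(\chi_1-1)^{v_2-k_2}$. It suffices to produce, for every sufficiently large $n$, an $n$-vertex graph $G$ with $\rho_1\,t(H_1,G)+\rho_2\,t(H_2,\overline G)<1-c$ for a constant $c>0$ independent of $n$; this shows directly that $(H_1,H_2)$ is not multiplicity good. The construction is a ``perturbed Tur\'an colouring for $H_2$'': fix a small constant $p\in(0,1)$ to be chosen at the end, partition $V(G)$ into $\chi_2-1$ classes of almost-equal size, put \emph{no} edge of $G$ between distinct classes, and inside each class let $G$ be a random graph in which every pair is non-adjacent independently with probability $p$. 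Equivalently, $\overline G$ is the balanced complete $(\chi_2-1)$-partite graph together with an independent binomial random graph of density $p$ inside each part. Since some realisation does at least as well as the average, it is enough to bound $\mathbb E\big[\rho_1\,t(H_1,G)+\rho_2\,t(H_2,\overline G)\big]$.

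For the red side, since $G$ has no edges across classes, a homomorphism from $H_1$ to $G$ must map each component of $H_1$ into a single class, and, inside a class, the probability that all edges of a given component are present is $(1+o(1))(1-p)^{\text{(number of its edges)}}$; the error comes only from the $O(1/n)$ fraction of maps that are not injective. Summing over the independent choices of a class for each of the $k_1$ components yields $\mathbb E[t(H_1,G)]=(1+o(1))(\chi_2-1)^{k_1-v_1}(1-p)^{e(H_1)}$, and therefore $\rho_1\,\mathbb E[t(H_1,G)]=(1+o(1))(1-p)^{e(H_1)}$.

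For the blue side, any homomorphism from $H_2$ to $\overline G$ projects to a colouring $c:V(H_2)\to\{1,\dots,\chi_2-1\}$ recording the class of each image; conditioned on $c$, each \emph{monochromatic} edge of $c$ must land on an edge of the random within-class graph, which happens independently with probability $p$, while bichromatic edges are automatic. Hence $\mathbb E[t(H_2,\overline G)]\le(1+o(1))(\chi_2-1)^{-v_2}\sum_c p^{m(c)}$, where $m(c)$ is the number of monochromatic edges of $c$ and the sum ranges over all $(\chi_2-1)$-colourings of $H_2$. As $\chi(H_2)=\chi_2$, there are no proper $(\chi_2-1)$-colourings, so the $m(c)=0$ term is absent; the number of $c$ with $m(c)=1$ — the nearly proper colourings of $H_2$ — is at most the quantity $N_2$ provided by Corollary~\ref{cor:nearlyProper} when $\chi_2\neq4$ and by Lemma~\ref{lem:trivialchi4} when $\chi_2=4$; and the terms with $m(c)\ge2$ contribute at most $p^2(\chi_2-1)^{v_2}$ in total, since there are at most $(\chi_2-1)^{v_2}$ colourings altogether. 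Thus
\[
\rho_2\,\mathbb E[t(H_2,\overline G)]\ \le\ (1+o(1))\,\rho_2\!\left(\frac{N_2}{(\chi_2-1)^{v_2}}\,p+p^2\right).
\]

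Combining the two estimates with the elementary bound $(1-p)^{e(H_1)}\le 1-e(H_1)p+\binom{e(H_1)}{2}p^2$ gives
\[
\mathbb E\big[\rho_1\,t(H_1,G)+\rho_2\,t(H_2,\overline G)\big]\ \le\ (1+o(1))\Big[\,1-\kappa\,p+C\,p^2\,\Big]+o(1),
\]
where $C=C(H_1,H_2)$ is a constant and $\kappa:=e(H_1)-\rho_2 N_2/(\chi_2-1)^{v_2}$. A routine simplification shows that $\rho_2 N_2/(\chi_2-1)^{v_2}$ is exactly the right-hand side of the displayed inequality in the statement of the theorem, in each of the two cases, so the hypothesis is precisely $\kappa>0$. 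Taking $p:=\min\{1/2,\ \kappa/(2C)\}$ makes the bracket at most $1-\kappa p/2$, so for all large $n$ there is an $n$-vertex graph $G$ with $\rho_1\,t(H_1,G)+\rho_2\,t(H_2,\overline G)<1-\kappa p/4<1$, as desired. The only step requiring real care is the bookkeeping on the blue side: checking that the coefficient of $p$ there is governed exactly by the number of nearly proper colourings of $H_2$ (so that Corollary~\ref{cor:nearlyProper} applies), matched against the coefficient $e(H_1)$ of the $p$-term lost on the red side; once the construction is identified, the remainder is routine asymptotics and optimisation in $p$. (One could equally phrase $G$ via weighted graphs/graphons, but the random model keeps the argument self-contained.)
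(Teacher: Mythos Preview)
Your proof is correct and follows essentially the same approach as the paper: both perturb the complement of the $(\chi(H_2)-1)$-partite Tur\'an graph by deleting within-class edges independently with small probability, expand $\rho_1 t(H_1,G)+\rho_2 t(H_2,\overline G)$ to first order in that probability, identify the linear coefficient on the blue side with the count of nearly proper colourings of $H_2$ (bounded via Corollary~\ref{cor:nearlyProper} or Lemma~\ref{lem:trivialchi4}), and observe that the hypothesis makes the net linear term negative. The only cosmetic differences are that you argue via expectation plus ``some realisation beats the average'' while the paper passes to the almost-sure limit, and you optimise $p$ explicitly where the paper simply takes $\varepsilon$ sufficiently small.
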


\begin{proof}
Suppose that $H_1$ and $H_2$ are graphs satisfying the hypotheses of the theorem. Let $\varepsilon>0$ be very small and, for each $n\geq \chi(H_2)-1$, let $G_{n,\varepsilon}$ be a graph on $n$ vertices obtained from the complement of the Tur\'an graph with $n$ vertices and $\chi(H_2)-1$ parts by jettisoning each edge of this graph with probability $\varepsilon$ independently of all other such edges. Define
\[f_1(\varepsilon):=\lim_{n\to\infty}t(H_1,G_{n,\varepsilon}),\qquad f_2(\varepsilon):=\lim_{n\to\infty}t(H_2,\overline{G_{n,\varepsilon}})\]
and note that both of these limits exist with probability $1$. Moreover, with probability one,
\[(\chi(H_2)-1)^{v(H_1)-k(H_1)}\cdot f_1(\varepsilon) =(1-\varepsilon)^{e(H_1)}= 1-e(H_1)\varepsilon+O(\varepsilon^2)\]
where the asymptotics here (and throughout the proof) are as $\varepsilon\to0$. 

Let $K$ be the number of nearly proper colourings of $H_2$. Then, with probability one,
\begin{equation}\label{eq:f2thing}(\chi(H_1)-1)^{v(H_2)-k(H_2)}\cdot f_2(\varepsilon) = (\chi(H_1)-1)^{v(H_2)-k(H_2)}\cdot\varepsilon K\left(\frac{1}{\chi(H_2)-1}\right)^{v(H_2)} + O(\varepsilon^2).\end{equation}
At this point, we divide the proof into cases.

\begin{casee}
$\chi(H_2)\neq 4$.
\end{casee}

By Corollary~\ref{cor:nearlyProper}, the linear term of \eqref{eq:f2thing} (with respect to $\varepsilon$) is at most
\[(\chi(H_1)-1)^{v(H_2)-k(H_2)}\cdot\varepsilon \cdot \crit(H_2)\cdot(\chi(H_2)-2)!\cdot(\chi(H_2)-2)^{v(H_2)-\chi(H_2)-k(H_2)+1}\left(\frac{1}{\chi(H_2)-1}\right)^{v(H_2)-k(H_2)}\]
which is equal to 
\[\frac{\varepsilon\cdot \crit(H_2)\cdot (\chi(H_2)-2)!\cdot(\chi(H_2)-2)^{v(H_2)-\chi(H_2)-k(H_2)+1}\cdot(\chi(H_1)-1)^{v(H_2)-k(H_2)}}{(\chi(H_2)-1)^{v(H_2)-k(H_2)}}.\]
Therefore, the lower bound on $e(H_1)$ assumed at the beginning of the proof implies that the linear term with respect to $\varepsilon$ in $(\chi(H_2)-1)^{v(H_1)-k(H_1)}\cdot f_1(\varepsilon) + (\chi(H_1)-1)^{v(H_2)-k(H_2)}\cdot f_2(\varepsilon)$ has a negative coefficient. So, for $\varepsilon$ sufficiently small, we have that 
\[(\chi(H_2)-1)^{v(H_1)-k(H_1)}t(H_1,G_{n,\varepsilon}) + (\chi(H_1)-1)^{v(H_2)-k(H_2)}t(H_2,\overline {G_{n,\varepsilon}})=1-\Omega(\varepsilon)\] 
as $n\to\infty$ which implies that $(H_1,H_2)$ is not multiplicity good.

\begin{casee}
$\chi(H_2)=4$. 
\end{casee}

In this case, by Lemma~\ref{lem:trivialchi4}, the linear term of \eqref{eq:f2thing} with respect to $\varepsilon$ is at most
\[(\chi(H_1)-1)^{v(H_2)-k(H_2)}\varepsilon\cdot\crit(H_2)\cdot 2^{v(H_2)-k(H_2)-1}\left(\frac{1}{3}\right)^{v(H_2)-k(H_2)}\]
\[=\frac{\varepsilon\cdot \crit(H_2)\cdot 2^{v(H_2)-k(H_2)-1}(\chi(H_1)-1)^{v(H_2)-k(H_2)}}{3^{v(H_2)-k(H_2)}}.\]
Thus, analogous to the previous case, by taking $\varepsilon$ sufficiently close to zero, we get a certificate that $(H_1,H_2)$ is not multiplicity good. 
\end{proof}

\begin{proof}[Proof of Proposition~\ref{prop:imbalance}]
The proposition follows immediately from Theorem~\ref{th:imbalance}.
\end{proof}

\section{Conclusion}
\label{sec:concl}

We conclude by stating some open problems. A result of Goodman~\cite{Goodman59} implies that $c_1(K_3)=1/4$ and so $K_3$ is multiplicity good. However, for odd $n$, the quantity $\hom_{\inj}(K_3,G)+\hom_{\inj}(K_3,\overline{G})$ is minimized among all $n$-vertex graphs by every $n$-vertex graph $G$ which is $((n-1)/2)$-regular; therefore, $K_3$ is multiplicity good but not a bonbon. We are currently unaware of any non-$3$-colourable graph which is multiplicity good but not a bonbon, which leads us to the following question. 

\begin{ques}
Is it true that every non-$3$-colourable multiplicity good graph is a bonbon?
\end{ques}

It would also be interesting to explore off-diagonal variants of the above question, such as the following.

\begin{ques}
Suppose that $(H_1,H_2)$ is multiplicity good such that $H_1$ and $H_2$ are non-bipartite and $\chi(H_1)+\chi(H_2)\geq7$. Does it follow that $(H_1,H_2)$ is a bonbon pair?
\end{ques}

Currently, all of the known examples of bonbons contain vertices of degree one. It is unclear whether a bonbon of minimum degree at least two can exist. The analogous question for non-3-colourable multiplicity good graphs is also intriguing (the case of chromatic number three is settled, since $K_3$ is multiplicity good).

\begin{ques}
Does there exist a bonbon $H$ such that $\delta(H)\geq2$? 
\end{ques}

\begin{ques}
Does there exist a non-3-colourable multiplicity good graph $H$ such that $\delta(H)\geq2$? 
\end{ques}

\begin{ack}
The authors would like to thank Elena Moss for valuable discussions on themes related to those covered in this paper.
\end{ack}

\end{document}